\newtheorem{theorem}{Theorem}
\newtheorem{corollary}{Corollary}
\newtheorem{lemma}{Lemma}
\newtheorem{definition}{Definition}
\newcommand{\ijktuple}{(I, +_I, \preceq_I, J, +_J, \preceq_J, K, +_K, \preceq_K, \ast)}
\newcommand{\ituple}{(I, +, \preceq_I, \ast)}
\begin{document}

\title{On rearrangement inequalities for multiple sequences}
\date{February 24, 2020\\ Latest update: April 14, 2022}
\author{Chai Wah Wu\\ IBM T. J. Watson Research Center\\ P. O. Box 218, Yorktown Heights, New York 10598, USA\\e-mail: chaiwahwu@ieee.org}

\maketitle

\begin{abstract}
The classical rearrangement inequality provides bounds for the sum of products of two sequences under permutations of terms and show that similarly ordered sequences provide the largest value whereas opposite ordered sequences provide the smallest value. This has been generalized to multiple sequences to show that similarly ordered sequences provide the largest value. However, the permutations of the sequences that result in the smallest value are generally not known. We show a variant of the rearrangement inequality for which a lower bound can be obtained and conditions for which this bound is achieved for a sequence of permutations. We also study a generalization of the rearrangement inequality and a variation where the permutations of terms can be across the various sequences. For this variation, we can also find the minimizing and maximizing sequences under certain conditions. Finally, we also look at rearrangement inequalities of other objects that can be ordered such as functions and matrices.
\end{abstract}

\section{Introduction}
The rearrangement inequality \cite{Hardy1952} states that given two finite sequences of real numbers the sum of the product of pairs of terms is maximal when the sequences are similarly ordered and minimal when oppositely ordered. More precisely, suppose $x_1 \leq x_2 \cdots \leq x_n$ 
and $y_1 \leq y_2 \cdots \leq y_n$, then for any permutation $\sigma$ in the symmetric group $S_n$ of permutations on $\{1,\cdots , n\}$,
\begin{equation}x_ny_1+\cdots +x_1y_n \leq x_{\sigma(1)}y_1+\cdots + x_{\sigma(n)}y_n \leq x_1y_1+\cdots x_ny_n
\label{eqn:rearrange1}
\end{equation}

The dual inequality is also true \cite{oppenheim:rearrangement:1954}, albeit only for nonnegative numbers in general (i.e. $x_i\geq 0$, $y_i\geq 0$):

\begin{equation}
(x_1+y_1) \cdots  (x_n+y_n) \leq (x_{\sigma(1)}+y_1)\cdots  (x_{\sigma(n)}+y_n ) \leq (x_n+y_1)\cdots (x_1+y_n)
\label{eqn:rearrange2}
\end{equation}

Eq. (\ref{eqn:rearrange2}) says that similarly ordered terms minimize the product of sums of pairs, while opposite ordered terms maximize the product of sums.
In Ref. \cite{Minc1971} it was shown that Eq. (\ref{eqn:rearrange1}) and Eq. (\ref{eqn:rearrange2}) are equivalent for positive numbers.

In Ref. \cite{Ruderman1952}, these inequalities are generalized to multiple sequences of numbers:
\begin{lemma}\label{lem:ruderman}
Consider a set of nonnegative numbers $\{a_{ij}\}$, $i=1,\cdots, k$, $j=1,\cdots, n$.  For each $i$, let
$a'_{i1},a'_{i2},\cdots,a'_{in}$ be the numbers $a_{i1},a_{i2},\cdots,a_{in}$ reordered such that
$a'_{i1}\geq a'_{i2}\geq\cdots\geq a'_{in}$.  Then
\[ \sum_{j=1}^n\prod_{i=1}^k a_{ij} \leq   \sum_{j=1}^n\prod_{i=1}^k a'_{ij} \]
\[ \prod_{j=1}^n\sum_{i=1}^k a_{ij} \geq   \prod_{j=1}^n\sum_{i=1}^k a'_{ij} \]
\end{lemma}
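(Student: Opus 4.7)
The plan is to prove both inequalities by a global-optimality argument that reduces, row by row, to the two classical two-sequence rearrangement inequalities (\ref{eqn:rearrange1}) and (\ref{eqn:rearrange2}). I will describe the sum-of-products inequality in detail; the product-of-sums inequality is strictly analogous with sums playing the role of products.

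Let $F(A) := \sum_{j=1}^n \prod_{i=1}^k a_{ij}$, and let $A^\star$ denote the arrangement in which each row is sorted decreasingly. Because there are only finitely many arrangements of the rows (at most $(n!)^k$, one permutation per row), $F$ attains its maximum at some arrangement $A^{\max}$; my goal is to show $F(A^{\max}) = F(A^\star)$, which then yields $F(A) \le F(A^\star)$ for every $A$. For each fixed row $i_0$ I would write $F(A^{\max}) = \sum_j a_{i_0,j}\, p_j$ with $p_j := \prod_{i \ne i_0} a_{ij}$. Since no re-permutation of row $i_0$ alone can increase $F(A^{\max})$, inequality (\ref{eqn:rearrange1}) applied to the two sequences $(a_{i_0,j})_j$ and $(p_j)_j$ forces row $i_0$ to be similarly ordered to $(p_j)_j$. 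Multiplying this pair of similarly ordered nonnegative sequences pointwise yields $(P_j)_j$ with $P_j := \prod_i a_{ij}$, which is therefore similarly ordered to row $i_0$ as well. Running this argument for every $i_0$ shows that every row of $A^{\max}$ is similarly ordered to the common sequence $(P_j)_j$.

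When the values $P_j$ are distinct, the $P$-sorting column permutation is unique and simultaneously sorts every row, identifying $A^{\max}$ with $A^\star$. When the $P_j$ have ties, the fixed-point condition $a_{i,j_1}\,p^{(i)}_{j_1} = P_{j_1} = P_{j_2} = a_{i,j_2}\,p^{(i)}_{j_2}$ together with the comonotonicity of $(a_{i,\cdot})$ and $(p^{(i)}_{\cdot})$ forces $a_{i,j_1}=a_{i,j_2}$ for every row $i$ within a $P$-tie-class, so any tie-breaking column permutation gives an arrangement that differs from $A^\star$ only by swapping identical columns; hence $F(A^{\max}) = F(A^\star)$.

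The product-of-sums inequality follows by the same scheme with $G(A) := \prod_j \sum_i a_{ij}$ and a global minimizer $A^{\min}$. Inequality (\ref{eqn:rearrange2}) applied to row $i_0$ and $r_j := \sum_{i \ne i_0} a_{ij}$ forces row $i_0$ to be similarly ordered to $(r_j)_j$; since the sum of two similarly ordered nonnegative sequences is similarly ordered to each summand, row $i_0$ is also similarly ordered to the column sum $\sum_i a_{ij}$, and the same column-identification argument concludes $G(A^{\min}) = G(A^\star)$. I expect the main obstacle to be the tie-breaking bookkeeping — verifying that ties in $(P_j)$ or $(\sum_i a_{ij})$ really do force identical column content across every row so that the fixed-point condition pins down $A^\star$ up to harmless reordering. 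Zero entries, which make the similarly-ordered condition degenerate, can be absorbed by the perturbation $a_{ij} \mapsto a_{ij}+\varepsilon$ and letting $\varepsilon \to 0^+$, using continuity of $F$ and $G$.
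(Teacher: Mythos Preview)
The paper does not actually supply a proof of Lemma~\ref{lem:ruderman}; it is quoted as a known result from Ruderman's 1952 paper and used as a starting point. So there is no in-paper argument to compare against, and I will simply assess your approach on its own.

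Your global-optimality argument is sound. Taking a maximiser $A^{\max}$ and freezing all but one row does give, via the converse of Eq.~(\ref{eqn:rearrange1}), that row $i_0$ is comonotone with $(p_j^{(i_0)})_j$; the observation that the pointwise product of two comonotone nonnegative sequences is comonotone with each factor then ties every row to the common sequence $(P_j)_j$. The tie analysis in the strictly positive case is correct: from $a_{i,j_1}p^{(i)}_{j_1}=a_{i,j_2}p^{(i)}_{j_2}$ with $a_{i,j_1}\ge a_{i,j_2}>0$ and $p^{(i)}_{j_1}\ge p^{(i)}_{j_2}>0$ one gets equality in both factors; and your $\varepsilon$-perturbation to remove zeros is a legitimate way to close the remaining gap, since adding a constant to every entry preserves the row orderings (so the perturbed $A^\star$ is the same column pattern as the original). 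The product-of-sums case is in fact cleaner, because $a_{i,j_1}+r^{(i)}_{j_1}=a_{i,j_2}+r^{(i)}_{j_2}$ with both pairs comonotone forces equality without any positivity assumption; only the converse of Eq.~(\ref{eqn:rearrange2}) needs the perturbation to avoid a zero factor killing the product.

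If you want to avoid the tie bookkeeping entirely, a shorter route is the classical two-column lemma: for nonnegative $x_i,y_i$ one has $\prod_i x_i+\prod_i y_i\le \prod_i\max(x_i,y_i)+\prod_i\min(x_i,y_i)$ (and the dual for sums), proved by a one-line induction on $k$. Applying this to every pair of columns in a bubble-sort pattern transforms any $A$ into $A^\star$ without decreasing $F$ (resp.\ increasing $G$), and no extremiser or tie analysis is needed. Your argument and this one are equally valid; yours is closer in spirit to the optimality viewpoint the paper later adopts in Section~\ref{sec:generalizedRI}.
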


Note that only half of the rearrangement inequality is generalized. In particular, the rightmost inequality (the upper bound) in Eq. (\ref{eqn:rearrange1}) and the leftmost inequality (the lower bound) in 
Eq. (\ref{eqn:rearrange2}) are generalized in Lemma \ref{lem:ruderman} by showing that similarly ordered sequences maximizes the sum of products and minimizes the product of sums. No such generalization is known for the other half. This paper provides results for the other direction and generalizes the rearrangement inequalities in various ways.

Eq. (\ref{eqn:rearrange1}) can be used to prove the AM-GM inequality which states that the algebraic mean of nonnegative numbers are larger than or equal to their geometric mean. We will rewrite it in the following equivalent form.

\begin{lemma}[AM-GM inequality]\label{lem:am-gm}
For $n$ nonnegative real numbers $x_i$, $\sum_{i=1}^n x_i \geq n \sqrt[n]{\prod_{i=1}^n x_i}$ and   $\prod_{i=1}^n x_i \leq \left(\frac{\sum_{i=1}^n x_i}{n}\right)^n$ with equality if and only if all the $x_i$ are the same.
\end{lemma}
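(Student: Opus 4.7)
The plan is to deduce Lemma \ref{lem:am-gm} from the rearrangement inequality Eq. (\ref{eqn:rearrange1}) via a telescoping normalization. The degenerate case in which some $x_i = 0$ is immediate: the geometric mean vanishes, $\sum x_i \geq 0$ is trivial, and equality forces $\sum x_i = 0$ hence all $x_i = 0$. So I may assume $x_i > 0$ for all $i$ and set $G = \sqrt[n]{\prod_i x_i} > 0$.

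Define the normalized partial products $b_0 = 1$ and $b_k = \left(\prod_{i\leq k} x_i\right)/G^k$ for $1 \leq k \leq n$; note that $b_n = 1$ since $\prod_i x_i = G^n$. A direct check gives $x_i/G = b_i/b_{i-1}$, so $\sum_i x_i / G = \sum_i b_i/b_{i-1}$. I consider the two length-$n$ sequences $A = (b_1,\ldots,b_n)$ and $B = (1/b_0,\ldots,1/b_{n-1})$, whose componentwise inner product is precisely $\sum_i b_i/b_{i-1}$. Because $b_0 = b_n = 1$, the multiset of values in $A$ coincides with the multiset $\{1/c : c \in B\}$, so after sorting $A$ increasingly and $B$ decreasingly the paired terms are reciprocals of each other and the oppositely ordered inner product equals $n$. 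The leftmost inequality of Eq. (\ref{eqn:rearrange1}) then yields $\sum_i b_i/b_{i-1} \geq n$, i.e., $\sum_i x_i \geq n G$, which is the first stated inequality; the second form $\prod x_i \leq ((\sum x_i)/n)^n$ follows by raising $G \leq (\sum x_i)/n$ to the $n$-th power.

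For the equality clause, tightness in the rearrangement lower bound forces the given pairing $(b_i, 1/b_{i-1})$ to itself be oppositely ordered, which translates to the condition $b_{i-1} \geq b_{j-1}$ whenever $b_i > b_j$. Propagating this monotonicity along $b_0, b_1, \ldots, b_n$ and invoking the endpoint identity $b_0 = b_n = 1$ collapses the chain to a constant, so $b_i = 1$ for every $i$ and therefore $x_i = G$ for all $i$. The step I expect to be the most delicate is this equality analysis, since rearrangement tightness is only defined up to ordering ties; the forced closure $b_0 = b_n$ is what resolves the ambiguity. A fallback, should that argument become awkward, is a short induction on $n$ bootstrapped from the base case $x_1 + x_2 \geq 2\sqrt{x_1 x_2}$, but the telescoping approach above has the advantage of staying entirely within the rearrangement framework advertised by the paper.
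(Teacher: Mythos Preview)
The paper does not actually prove Lemma~\ref{lem:am-gm}; it merely records AM--GM as a known consequence of Eq.~(\ref{eqn:rearrange1}) and then uses it. Your telescoping-normalization argument is precisely the classical route from the rearrangement inequality to AM--GM, so your proposal is fully in the spirit of what the paper alludes to, and the inequality portion is correct as written.

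The only place that deserves one more sentence is the equality clause, which you yourself flagged. Your extracted condition ``$b_{i-1}\ge b_{j-1}$ whenever $b_i>b_j$'' is the right no-improving-swap criterion for tightness in Eq.~(\ref{eqn:rearrange1}). To turn ``propagating this monotonicity'' into an explicit argument: if $M=\max_{0\le i\le n} b_i>1$, let $k$ be the smallest index with $b_k=M$ (so $1\le k\le n-1$ and $b_{k-1}<M$); the contrapositive of your condition, namely $b_p<b_q\Rightarrow b_{p+1}\le b_{q+1}$ for $p,q\in\{0,\dots,n-1\}$, applied with $p=k-1$ and successive $q=k,k+1,\dots,n-1$ forces $b_{k+1}=\cdots=b_n=M$, contradicting $b_n=1$. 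A symmetric argument rules out $\min_i b_i<1$, so all $b_i=1$ and hence all $x_i=G$. With that line added, the proof is complete and no fallback induction is needed.
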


This allows us to give the following bounds on the other direction of Lemma \ref{lem:ruderman}.
\begin{lemma}
Consider a set of nonnegative numbers $\{a_{ij}\}$, $i=1,\cdots, k$, $j=1,\cdots, n$. Then
\[ n\sqrt[n]{\prod_{ij}a_{ij}} \leq \sum_{j=1}^n\prod_{i=1}^k a_{ij} \]
\[ \left(\frac{\sum_{ij}a_{ij}}{n}\right)^n \geq \prod_{j=1}^n\sum_{i=1}^k a_{ij} \]
\end{lemma}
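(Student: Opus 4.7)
My plan is to observe that both inequalities are direct one-line consequences of the AM-GM inequality (Lemma \ref{lem:am-gm}) applied to $n$ well-chosen nonnegative quantities indexed by $j$, so no interaction with the rearrangement machinery of Lemma \ref{lem:ruderman} is actually needed. The only subtlety is choosing the right $n$ numbers in each case.

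For the first inequality, I would set $x_j = \prod_{i=1}^k a_{ij}$ for $j = 1,\ldots,n$. These are $n$ nonnegative reals, so the lower bound in Lemma \ref{lem:am-gm} gives
\[
\sum_{j=1}^n \prod_{i=1}^k a_{ij} \;=\; \sum_{j=1}^n x_j \;\geq\; n\sqrt[n]{\prod_{j=1}^n x_j} \;=\; n\sqrt[n]{\prod_{j=1}^n \prod_{i=1}^k a_{ij}} \;=\; n\sqrt[n]{\prod_{ij} a_{ij}},
\]
which is exactly the first claim.

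For the second inequality, I would instead set $y_j = \sum_{i=1}^k a_{ij}$ for $j = 1,\ldots,n$. Each $y_j$ is nonnegative, so the upper bound in Lemma \ref{lem:am-gm} yields
\[
\prod_{j=1}^n \sum_{i=1}^k a_{ij} \;=\; \prod_{j=1}^n y_j \;\leq\; \left(\frac{\sum_{j=1}^n y_j}{n}\right)^{\!n} \;=\; \left(\frac{\sum_{j=1}^n \sum_{i=1}^k a_{ij}}{n}\right)^{\!n} \;=\; \left(\frac{\sum_{ij} a_{ij}}{n}\right)^{\!n},
\]
which is the second claim. There is no real obstacle here; the only thing worth remarking on, if one cares about equality cases, is that by the equality clause of AM-GM the first bound is tight exactly when the column products $\prod_i a_{ij}$ are equal across $j$, and the second bound is tight exactly when the column sums $\sum_i a_{ij}$ are equal across $j$. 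I would probably not include the proof in detail in the paper since it is so immediate, but would at least state that it follows from AM-GM applied termwise over $j$.
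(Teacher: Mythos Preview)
Your proof is correct and matches the paper's approach exactly: the paper presents this lemma immediately after stating AM-GM (Lemma~\ref{lem:am-gm}) with the words ``This allows us to give the following bounds,'' and indeed gives no separate proof because it is a direct application of AM-GM to the $n$ column products $\prod_i a_{ij}$ and the $n$ column sums $\sum_i a_{ij}$, just as you wrote.
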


In addition, Lemma \ref{lem:am-gm} implies that if there exists $k$ permutations $\sigma_i$ on $\{1,\cdots ,n\}$ such that $\prod_{i=1}^k a_{i\sigma_i(j)} = \prod_{i=1}^k a_{i\sigma_i(1)}$ for all $j$, then this set of permutations will achieve the lower bound and minimize the sum of products, i.e.
\[ \sum_{j=1}^n\prod_{i=1}^k a_{i\sigma_i(j)} \leq \sum_{j=1}^n\prod_{i=1}^k a_{ij} \]
Similarly, if there exists permutations $\sigma_i$ such that $\sum_{i=1}^k a_{i\sigma_i(j)} = \sum_{i=1}^k a_{i\sigma_i(1)}$ for all $j$, then this set of permutations will achieve the upper bound and maximize the product of sums, i.e.
\[ \prod_{j=1}^n\sum_{i=1}^k a_{i\sigma_i(j)} \geq \prod_{j=1}^n\sum_{i=1}^k a_{ij} \]
In the next section we consider scenarios where these conditions can be satisfied for some sequence of permutations of terms and thus supply the other directions of Lemma \ref{lem:ruderman}.

\section{Sums of products of permuted sequences}\label{sec:sumprod}
Instead of considering multiple sequences, we restrict ourselves to permutations of the same sequence and look at sum of products of these sequences.

\begin{definition}
Let $0 \leq a_1 \leq  a_2  \dots \leq a_n$ be a sequence of nonnegative numbers. 
Consider $k$ permutations of the integers $\{1,\cdots, n\}$ denoted as $\{\sigma_1,\cdots, \sigma_k\}$
and define the value 
$v(n,k) = \sum_{i=1}^n \prod_{j=1}^k a_{\sigma_j(i)}$.  The maximal and minimal value of $v$ among all $k$-sets of permutations are denoted as $v_{\max}(n,k)$ and $v_{\min}(n,k)$ respectively.
\end{definition}

An immediate consequence of Lemma \ref{lem:ruderman} is that
$v_{\max}(n,k) = \sum_{i=1}^n a_i^k$ and 
is achieved when 
all the $k$ permutations $\sigma_i$ are the same.  

$v_{\min}(n,k)$ and $v_{\max}(n,k)$ can be determined explicitly for small value of $n$ or $k$.
\begin{lemma}
\begin{itemize}
\item $v(1,k) = a_i^k$,
\item $v(n,1) = \sum_{i=1}^n a_i$,
\item $v_{\max}(2,k) = a_1^k+a_2^k$.
\item $v_{\min}(2,2m) = 2a_1^ma_2^m$
\item $v_{\min}(2,2m+1) = (a_1+a_2)a_1^ma_2^m$
\item $v_{\max}(n,2) = \sum_{i=1}^n a_i^2$
\item  $v_{\min}(n,2) = \sum_{i=1}^n a_ia_{n-i+1}$
\end{itemize}
\end{lemma}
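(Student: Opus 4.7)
The plan is to handle each bullet in turn, grouping by the tool that applies. The first two items (the degenerate cases $n=1$ and $k=1$) are immediate from the definition: with $n=1$ there is only one permutation, and with $k=1$ the sum $\sum_i a_{\sigma(i)}$ is permutation-invariant.

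For the two maximum statements (items 3 and 6), I will apply Lemma \ref{lem:ruderman} directly. Viewing the $k$ rows of Ruderman's array as the sequences $(a_{\sigma_j(1)},\ldots,a_{\sigma_j(n)})$, the lemma gives $v(n,k) \leq \sum_{i=1}^n a_i^k$, with equality when all $\sigma_j$ coincide; specializing to $n=2$ and $k=2$ respectively yields the two stated formulas. For item 7, I will invoke the classical rearrangement inequality \eqref{eqn:rearrange1}: since $v$ depends only on the relative ordering of $\sigma_1$ and $\sigma_2$, I may assume $\sigma_2$ is the identity, whereupon \eqref{eqn:rearrange1} forces the minimum of $\sum_i a_{\sigma_1(i)}a_i$ to occur at the reversal $\sigma_1(i)=n-i+1$.

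The real work is in items 4 and 5 on $v_{\min}(2,k)$. Since $S_2=\{e,(1\,2)\}$, each $\sigma_j$ is either the identity or the swap, and letting $t$ count identities yields
\[
 v \;=\; a_1^t a_2^{k-t} + a_1^{k-t} a_2^t \;=:\; g(t), \qquad t\in\{0,1,\ldots,k\}.
\]
The two summands have constant product $(a_1 a_2)^k$, so Lemma \ref{lem:am-gm} gives $g(t)\geq 2(a_1a_2)^{k/2}$. For even $k=2m$ this lower bound is attained at $t=m$, proving item 4. For odd $k=2m+1$ the AM-GM bound is unattainable on the integer lattice, so I will instead establish unimodality of $g$ directly via the factorization
\[
 g(t+1)-g(t) \;=\; (a_2-a_1)\bigl(a_1^{k-t-1}a_2^{t} - a_1^{t}a_2^{k-t-1}\bigr);
\]
under the harmless assumption $a_1\leq a_2$, the sign of this difference matches the sign of $t-(k-1)/2$, so $g$ is non-increasing on $\{0,\ldots,m\}$ and non-decreasing on $\{m+1,\ldots,k\}$, with common minimum value $g(m)=g(m+1)=a_1^m a_2^m(a_1+a_2)$, proving item 5.

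The only non-routine step is this unimodality argument for odd $k$, where AM-GM alone falls short and I have to compare consecutive values of $g$ by hand; the remaining items follow essentially immediately from Lemmas \ref{lem:ruderman} and \ref{lem:am-gm} together with the classical rearrangement inequality.
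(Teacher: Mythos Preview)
Your proposal is correct. For items 1, 2, 3, 6, and 7 your argument coincides with the paper's: the degenerate cases are immediate, the maxima follow from Lemma~\ref{lem:ruderman}, and $v_{\min}(n,2)$ comes straight from Eq.~\eqref{eqn:rearrange1}.

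The difference lies in items 4 and 5. The paper's proof merely exhibits the configuration ($m$ copies of each permutation for $k=2m$; $m$ and $m+1$ copies for $k=2m+1$) and asserts the corresponding value is $v_{\min}$, without justifying minimality. You instead parametrize $v$ as $g(t)=a_1^t a_2^{k-t}+a_1^{k-t}a_2^t$ and prove minimality: via AM--GM for even $k$, and via the telescoping identity $g(t+1)-g(t)=(a_2-a_1)\bigl(a_1^{k-t-1}a_2^{t}-a_1^{t}a_2^{k-t-1}\bigr)$ establishing unimodality for odd $k$. This is a genuinely more complete argument than what the paper supplies; the paper effectively leaves the minimality check to the reader, while your approach closes that gap with a short self-contained computation.
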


\begin{proof}
For $k=1$ there is only one sequence and $v(n,1) = \sum_{i=1}^n a_i$. For $n=1$, the only permutation is $(1)$, so $v(1,k) = a_1^k$.
When $n = 2$, there are only two permutations on the integers $\{1,2\}$, and $v_{\max}(2,k) = a_1^k+a_2^k$.  If $k=2m$, $v_{\min}(2,k) = 2a_1^ma_2^m$ is achieved with $m$ of the permutations of one kind and the other half the other kind.
If $k=2m+1$, $v_{\min}(2,k) = (a_1+a_2)a_1^ma_2^m$ is achieved with $m$ of the permutations of one kind and $m+1$ of them the other kind.

The rearrangement inequality (Eq. (\ref{eqn:rearrange1})) implies that for $k=2$,  $v_{\max}(n,2) = \sum_{i=1}^n a_i^2$ and  $v_{\min}(n,2) = \sum_{i=1}^n a_ia_{n-i+1}$ by choosing both permutations to be  ($1$,$2$,$\cdots$, $n$) for $v_{\max}(n,2)$
and choosing the two permutations to be ($1$,$2$,$\cdots$, $n$) and ($n$,$n-1$,$\cdots$ , $2$, $1$) for $v_{\min}(n,2)$.  
\end{proof}

Our next result is a lower bound on $v_{\min}$:

\begin{lemma} \label{lem:vminbound}
$v_{\min}(n,k) \geq  n \prod_i a_i^{k/n}$.
\end{lemma}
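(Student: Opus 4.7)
The plan is to apply the AM-GM inequality (Lemma \ref{lem:am-gm}) directly to the $n$ terms $\prod_{j=1}^k a_{\sigma_j(i)}$, $i=1,\dots,n$, for an arbitrary choice of permutations $\sigma_1,\dots,\sigma_k$. This is exactly the same principle that was highlighted just before the definition, namely that AM-GM furnishes a universal lower bound on sums of products, so the bound is attainable whenever the individual products are equal across $i$.

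Concretely, I would first write
\[
v(n,k) \;=\; \sum_{i=1}^n \prod_{j=1}^k a_{\sigma_j(i)} \;\geq\; n\,\sqrt[n]{\prod_{i=1}^n \prod_{j=1}^k a_{\sigma_j(i)}}
\]
by Lemma \ref{lem:am-gm}, which applies since the $a_i$ are nonnegative. The next step is to simplify the double product on the right: swapping the order of multiplication and using the fact that each $\sigma_j$ is a bijection on $\{1,\dots,n\}$ gives
\[
\prod_{i=1}^n \prod_{j=1}^k a_{\sigma_j(i)} \;=\; \prod_{j=1}^k \prod_{i=1}^n a_{\sigma_j(i)} \;=\; \prod_{j=1}^k \prod_{i=1}^n a_i \;=\; \left(\prod_{i=1}^n a_i\right)^{\!k}.
\]
Substituting yields $v(n,k) \geq n \left(\prod_i a_i\right)^{k/n} = n \prod_i a_i^{k/n}$, and since this holds for every choice of $\sigma_1,\dots,\sigma_k$ it holds for $v_{\min}(n,k)$.

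There is really no obstacle here, as the statement is essentially a restatement of the general AM-GM-based observation made in the preamble to Section~\ref{sec:sumprod} specialized to the case where all $k$ sequences are permutations of the same sequence $a_1,\dots,a_n$. The only thing worth flagging is that equality in AM-GM requires all the row-products $\prod_{j=1}^k a_{\sigma_j(i)}$ to be equal, which is a strong combinatorial condition on the permutations; the bound can therefore be strict in general, and subsequent results in the section will presumably identify the $(n,k)$ for which equality is realizable.
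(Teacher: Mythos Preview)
Your proof is correct and follows exactly the same approach as the paper: apply AM--GM to the $n$ summands of $v(n,k)$, then use that each $\sigma_j$ is a bijection on $\{1,\dots,n\}$ to reduce the double product to $\left(\prod_i a_i\right)^k$. The paper's version is simply terser.
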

\begin{proof}
The product $\prod_{ij} a_{\sigma_i(j)}$ is equal to $\prod_i a_i^k$. Thus by Lemma \ref{lem:am-gm}, $v(n,k) \geq n\sqrt[n]{ \prod_i a_i^k} = n \prod_i a_i^{k/n}$.
\end{proof}

Our main result in this section is that this bound is tight when $k$ is a multiple of $n$.

\begin{theorem}
If $n$ divides $k$, then $v_{min}(n,k) = n\prod_{i=1}^n a_i^{k/n}$ and is achieved by using each cyclic permutation $k/n$ times..
\end{theorem}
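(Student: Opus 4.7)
The plan is to combine the lower bound from Lemma \ref{lem:vminbound} with an explicit construction that meets it, so the entire argument reduces to computing the value of $v$ for a specific choice of $k$ permutations. Since Lemma \ref{lem:vminbound} already supplies $v_{\min}(n,k)\geq n\prod_i a_i^{k/n}$, the only remaining task is to exhibit $k$ permutations for which the sum $\sum_i \prod_j a_{\sigma_j(i)}$ equals that lower bound.

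The natural candidate is the family of $n$ cyclic shifts $\tau_\ell(i) = ((i+\ell-1)\bmod n)+1$, for $\ell=0,\ldots,n-1$. Since $n\mid k$, we may list our $k$ permutations so that each $\tau_\ell$ appears exactly $k/n$ times. I would then fix an arbitrary index $i\in\{1,\ldots,n\}$ and compute
\[
\prod_{j=1}^k a_{\sigma_j(i)} \;=\; \prod_{\ell=0}^{n-1} a_{\tau_\ell(i)}^{k/n}.
\]
The key observation is that the cyclic group acts regularly on $\{1,\ldots,n\}$, so as $\ell$ ranges over $0,\ldots,n-1$, the value $\tau_\ell(i)$ takes each element of $\{1,\ldots,n\}$ exactly once. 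Consequently $\prod_{\ell=0}^{n-1} a_{\tau_\ell(i)} = \prod_{m=1}^n a_m$, which is independent of $i$.

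Raising to the power $k/n$ and summing over the $n$ values of $i$ gives
\[
v = \sum_{i=1}^n \prod_{j=1}^k a_{\sigma_j(i)} \;=\; n\prod_{m=1}^n a_m^{k/n},
\]
matching the lower bound of Lemma \ref{lem:vminbound} and thereby forcing equality in that estimate. I do not anticipate a serious obstacle here: the divisibility hypothesis $n\mid k$ is used exactly to make $k/n$ an integer so that each cyclic shift can be used the same number of times, which is precisely the equality case of AM-GM that underlies Lemma \ref{lem:vminbound}. The only mild subtlety worth flagging in the write-up is clarifying that we are free to use permutations with repetition in the definition of $v(n,k)$, as the definition in the preceding section allows.
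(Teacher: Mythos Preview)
Your proposal is correct and follows essentially the same approach as the paper: invoke Lemma~\ref{lem:vminbound} for the lower bound, then use $k/n$ copies of each of the $n$ cyclic shifts to achieve it. Your write-up is in fact more explicit than the paper's, which simply asserts that this choice yields $v(n,k) = n\prod_i a_i^{k/n}$ without spelling out the regularity argument you give.
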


\begin{proof}
By Lemma \ref{lem:vminbound} $v(n,k) \geq  n \prod_{i=1}^n a_i^{k/n}$.
Consider the $n$ cyclic permutations $r_1 = (1,2,...,n)$, $r_2 = (2,...,n,1)$, ..., $r_n = (n,1,...,n-1)$.
It is clear that using $k/n$ copies of each permutation $r_i$ to form $k$ permutations results in $v(n,k) = n\prod_{i=1}^n a_i^{k/n}$.
\end{proof}

\section{The dual problem of product of sums}\label{sec:prodsum}

\begin{definition}
Let $0 \leq a_1 \leq  a_2  \dots \leq a_n$ be a sequence of nonnegative numbers. 
Consider $k$ permutations of the integers $\{1,\cdots, n\}$ denoted as $\{\sigma_1,\cdots, \sigma_k\}$
and define the value 
$w(n,k) = \prod_{i=1}^n \sum_{j=1}^k a_{\sigma_j(i)}$.  The maximal and minimal value of $v$ among all $k$-sets of permutations are denoted as $w_{\max}(n,k)$ and $w_{\min}(n,k)$ respectively\footnote{To reduce the amount of notation, $v$, $w$, $v_{\min}$, $v_{\max}$,
$w_{\min}$, $w_{\max}$ are redefined in various subsections and the results about them are valid within the subsection.}.
\end{definition}

Analogous to Section \ref{sec:sumprod} the following results can be derived regarding $w_{\max}$ and $w_{\min}$.

\begin{lemma}\label{lem:ubwmax}
\begin{itemize}
\item $w_{\min}(n,k) = \prod_{i=1}^n k a_i = k^n \prod_i a_i $ 
\item $w_{\max}(1,k) = ka_1$
\item $w_{\max}(n,1) = \prod_i a_i$
\item $w_{\min}(2,k) = k^2 \prod_i a_i$.
\item $w_{\max}(2,2m) = (a_1+a_2)^2m^2$.
\item $w_{\max}(2,2m+1) = (ma_1+(m+1)a_2)(ma_2+(m+1)a_1)$.
\item $w_{\min}(n,2) = 2^n \prod_i a_i$.
\item $w_{\max}(n,2) = \prod_i (a_i+a_{n-i+1})$.
\item $w_{\max}(n,k) \leq \left(\frac{k\sum_i a_i}{n}\right)^n$ with equality if $n$ divides $k$.
\end{itemize}
\end{lemma}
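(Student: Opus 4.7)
The plan is to dispatch the nine bullets by combining Ruderman's lemma, the classical rearrangement inequality for products of sums (Eq.~\ref{eqn:rearrange2}), and AM--GM, in essentially the same way the dual results were proved in Section~\ref{sec:sumprod}.

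For the three minimum formulas (items 1, 4, 7), I would invoke the second inequality of Lemma~\ref{lem:ruderman}. View the $k$ permuted sequences as a $k \times n$ array with entries $b_{ji} = a_{\sigma_j(i)}$; Ruderman's lemma says the product of column sums is minimized when the rows are similarly ordered. Choosing every $\sigma_j$ to be the identity makes all rows identical (hence trivially similarly ordered) and the $i$th column sum is $k a_i$, giving the lower bound $k^n \prod_i a_i$. Items 2 and 3 are immediate from the definition, since when $n = 1$ or $k = 1$ there is essentially a single permutation to consider.

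For items 5 and 6 I would parametrize: when $n = 2$ each $\sigma_j$ is either $\mathrm{id}$ or the swap, so letting $t$ be the number of identity permutations, $w = (ta_1 + (k-t)a_2)(ta_2 + (k-t)a_1)$. The two factors have the constant sum $k(a_1 + a_2)$, so by AM--GM the product is largest when they are as equal as possible, i.e.\ when $t$ is as close to $k/2$ as allowed. For $k = 2m$ this gives $t = m$ and $w = m^2(a_1 + a_2)^2$; for $k = 2m+1$ the choices $t \in \{m, m+1\}$ both yield $(ma_1 + (m+1)a_2)(ma_2 + (m+1)a_1)$. Item 8 is a direct application of Eq.~\ref{eqn:rearrange2}: for $k = 2$ the product of pairwise sums is largest when the two sequences are oppositely ordered, giving $\prod_i (a_i + a_{n-i+1})$.

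Finally, for item 9, I would apply AM--GM to the $n$ column sums $S_i = \sum_{j=1}^{k} a_{\sigma_j(i)}$. Regardless of the $\sigma_j$, each $a_\ell$ appears exactly $k$ times among all entries $a_{\sigma_j(i)}$, so $\sum_i S_i = k \sum_i a_i$ is a constant, and AM--GM gives $w = \prod_i S_i \leq \left(\frac{k \sum_i a_i}{n}\right)^n$. When $n \mid k$, use each of the $n$ cyclic permutations $r_1,\dots,r_n$ exactly $k/n$ times; then $S_i = \frac{k}{n} \sum_\ell a_\ell$ for every $i$ so every factor equals the arithmetic mean and equality holds in AM--GM. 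The only mildly non-routine check is the discrete optimization in items 5--6, where one should verify that the continuous maximum at $t = k/2$ does translate to the claimed integer value(s); this follows because $(ta_1 + (k-t)a_2)(ta_2 + (k-t)a_1)$ is a downward-opening quadratic in $t$ symmetric about $k/2$.
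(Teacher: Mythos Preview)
Your proposal is correct and follows essentially the same approach as the paper: the paper does not give an explicit proof of Lemma~\ref{lem:ubwmax} but states that the results are ``analogous to Section~\ref{sec:sumprod}'', and your argument is precisely the dual of what is done there (Ruderman's lemma for the similarly-ordered extremum, the classical rearrangement inequality Eq.~(\ref{eqn:rearrange2}) for $k=2$, direct enumeration for $n=2$, and AM--GM plus cyclic permutations for the last item, mirroring Lemma~\ref{lem:vminbound} and Theorem~1). Your explicit verification that $w(2,k)$ is a downward-opening quadratic in $t$ symmetric about $k/2$ is a nice touch that makes items~5--6 fully rigorous.
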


\section{The special case where $a_i$ is an arithmetic progression}\label{sec:ap}
Consider the special case where the elements $a_i$ form an arithmetic progression, i.e. $a_i$ are equally spaced where $a_{i+1} - a_i$ is constant and does not depend on $i$.
Even though $v_{\min}$ are difficult to compute in general, explicit forms for $w_{\max}$ can be found for many values of $n$ and $k$.

\begin{theorem} \label{thm:k=even}
If $k = 2t+nu$ for nonnegative integers $t$ and $u$, then $w_{\max}(n,k) = \left(\frac{k(a_1+a_n)}{2}\right)^n$.
\end{theorem}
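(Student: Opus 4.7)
The plan is to combine the upper bound already available from Lemma \ref{lem:ubwmax} with an explicit construction of $k$ permutations that achieves equality.

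First, I would invoke the last item of Lemma \ref{lem:ubwmax}, which gives $w_{\max}(n,k)\leq \left(\frac{k\sum_i a_i}{n}\right)^n$. Since $a_1,\dots,a_n$ is an arithmetic progression, $\sum_i a_i = \frac{n(a_1+a_n)}{2}$, so this upper bound simplifies to exactly $\left(\frac{k(a_1+a_n)}{2}\right)^n$. Hence for the theorem it only remains to exhibit a choice of $k$ permutations $\sigma_1,\dots,\sigma_k$ for which every row sum $\sum_{j=1}^k a_{\sigma_j(i)}$ equals the common value $\frac{k(a_1+a_n)}{2}$; once this is done, the equality condition of AM--GM (Lemma \ref{lem:am-gm}) forces the product $\prod_i \sum_j a_{\sigma_j(i)}$ to equal the upper bound.

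The construction uses the decomposition $k = 2t + nu$. I would take $t$ copies of each of the two permutations $\mathrm{id} = (1,2,\dots,n)$ and $\mathrm{rev} = (n,n-1,\dots,1)$, accounting for $2t$ permutations, and then $u$ copies of each of the $n$ cyclic shifts $r_1,\dots,r_n$ used in the proof of the previous theorem, accounting for $nu$ more. For each position $i$, each pair $(\mathrm{id},\mathrm{rev})$ contributes $a_i + a_{n-i+1}$, which equals $a_1+a_n$ by the defining property of an arithmetic progression. Each block of the $n$ cyclic shifts contributes the full sum $\sum_{\ell=1}^n a_\ell = \frac{n(a_1+a_n)}{2}$ at every position. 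Thus the total row sum at position $i$ is
\[
t(a_1+a_n) + u\cdot\frac{n(a_1+a_n)}{2} = \frac{(2t+nu)(a_1+a_n)}{2} = \frac{k(a_1+a_n)}{2},
\]
independent of $i$, which is exactly what is needed.

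The only conceptual step is noticing that both building blocks -- the pair $\{\mathrm{id},\mathrm{rev}\}$ and the full set of cyclic shifts -- equalize the row sums, the first thanks to the AP symmetry $a_i+a_{n-i+1}=a_1+a_n$ and the second because each column of the cyclic-shift block contains every element of the sequence exactly once. I do not expect any real obstacle: once the upper bound has been rewritten using the AP identity and the two equalizing blocks are in hand, the theorem follows by concatenating $t$ copies of the first block and $u$ copies of the second.
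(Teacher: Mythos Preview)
Your proposal is correct and matches the paper's proof essentially line for line: the upper bound from Lemma~\ref{lem:ubwmax} combined with the AP identity $\sum_i a_i = n(a_1+a_n)/2$, and then the explicit construction using $t$ copies each of $(1,\dots,n)$ and $(n,\dots,1)$ together with $u$ copies of each cyclic shift $r_i$. One minor remark: you do not actually need to invoke the AM--GM equality condition once you have shown every row sum equals $\frac{k(a_1+a_n)}{2}$; the product is then directly $\left(\frac{k(a_1+a_n)}{2}\right)^n$, matching the upper bound.
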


\begin{proof}
It is easy to see that $\sum_i a_i = n(a_1+a_n)/2$. By Lemma \ref{lem:ubwmax} $w_{\max}(n,k) \leq \left(\frac{k(a_1+a_n)}{2}\right)^n$. 
By using $t$ copies of the permutation 
$(1,\cdots, n)$ and $t$ copies of the permutation $(n,\cdots, 1)$ followed by $u$ copies each of the cyclic permutations $r_i$,  we see that $\sum_{j}\sigma_j(i) = t(a_1+a_n)+un(a_1+a_n)/2 = (t+un/2)(a_1+a_n) = k(a_1+a_n)/2$ for all $i$ and thus $w(n,k) = \left(\frac{k(a_1+a_n)}{2}\right)^n$. 
\end{proof}

\begin{corollary}
If $k$ is even, then $w_{\max}(n,k) = \left(\frac{k(a_1+a_n)}{2}\right)^n$.
\end{corollary}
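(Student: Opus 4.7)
The plan is to deduce this corollary as a one-line specialization of Theorem \ref{thm:k=even}. The hypothesis of that theorem is that $k$ can be written as $k = 2t + nu$ for some nonnegative integers $t$ and $u$, and its conclusion is exactly the formula $w_{\max}(n,k) = \left(\frac{k(a_1+a_n)}{2}\right)^n$ that we want to establish. So the whole task reduces to exhibiting a valid pair $(t,u)$.

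Concretely, I would take $u = 0$ and $t = k/2$. Since $k$ is assumed even and nonnegative (as $k$ counts permutations), $t$ is a nonnegative integer, and the decomposition $k = 2t + n \cdot 0$ is obviously valid regardless of $n$. Plugging this into Theorem \ref{thm:k=even} immediately yields the claimed equality. Notice that the construction used inside the theorem's proof specializes nicely in this case: with $u = 0$, no cyclic permutations are needed, and the maximizer simply consists of $k/2$ copies of the identity permutation $(1, \dots, n)$ together with $k/2$ copies of the reverse permutation $(n, \dots, 1)$, which pairs each position $i$ with the sum $\tfrac{k}{2}(a_i + a_{n-i+1}) = \tfrac{k}{2}(a_1 + a_n)$ by the arithmetic progression property.

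There is essentially no obstacle here; the only thing to check is that the conditions on $t$ and $u$ in Theorem \ref{thm:k=even} can be met when $k$ is even, which is immediate. I would therefore write the proof as a two-sentence remark rather than a full argument.
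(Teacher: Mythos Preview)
Your proposal is correct and matches the paper's intent: the corollary is stated without proof immediately after Theorem~\ref{thm:k=even}, and the only way it follows is precisely by taking $u=0$ and $t=k/2$, as you do. Your additional remark about the specialization of the maximizing permutations is also accurate and consistent with the theorem's construction.
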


\begin{corollary}
If $n$ is odd and $k \geq n-1$, then $w_{\max}(n,k) = \left(\frac{k(a_1+a_n)}{2}\right)^n$.
\end{corollary}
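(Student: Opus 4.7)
The plan is to reduce the corollary directly to Theorem \ref{thm:k=even} by showing that every $k \geq n-1$ admits a representation $k = 2t + nu$ with $t, u$ nonnegative integers, whenever $n$ is odd. Once that is established, the formula $w_{\max}(n,k) = \left(\frac{k(a_1+a_n)}{2}\right)^n$ is immediate.

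First I would split on the parity of $k$. If $k$ is even, then $k = 2(k/2) + n \cdot 0$, which is the representation required (this also covers the case $k = n-1$, since $n$ odd forces $n-1$ even). If $k$ is odd, then because $n$ is odd the hypothesis $k \geq n-1$ together with $k \neq n-1$ (as $n-1$ is even) forces $k \geq n$. Now $k - n$ is a difference of two odd numbers, hence even and nonnegative, so $k = 2 \cdot \frac{k-n}{2} + n \cdot 1$ with $t = (k-n)/2 \geq 0$ and $u = 1$.

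With the representation in hand, the hypothesis of Theorem \ref{thm:k=even} is satisfied, and the theorem delivers the desired equality $w_{\max}(n,k) = \left(\frac{k(a_1+a_n)}{2}\right)^n$.

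There is no real obstacle here; the argument is essentially the observation that the Frobenius number of the pair $\{2, n\}$ with $n$ odd equals $n - 2$, so every integer $k \geq n - 1$ lies in the numerical semigroup generated by $2$ and $n$. The only minor point requiring care is checking that the boundary case $k = n-1$ is handled, which the parity argument does automatically.
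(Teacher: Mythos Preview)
Your proposal is correct and is exactly the intended derivation: the paper states this corollary immediately after Theorem~\ref{thm:k=even} without proof, expecting the reader to observe that for $n$ odd every $k\ge n-1$ lies in the numerical semigroup $\{2t+nu : t,u\ge 0\}$, which is precisely what your parity split establishes.
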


The case when $k$ is odd and $n$ is even is more involved.
Let $a_i = a_1 + (i-1)d = (a_1-d) + id$ for $i = 1,\cdots, n$ and $d\geq 0$.
Given a $k$-set of permutations $\sigma_j$ define $w_i$ as $w_i = \sum_{j=1}^k \sigma_j(i) $. This implies that $\sum_{j=1}^k a_{\sigma_j(i)} = k(a_1-d) + w_i d$.
Next we show there is a sequence of permutations for which $w_i-w_j \leq 1$ for all $i,j$ when $k\geq n-1$.

\begin{lemma} \label{lem:n-1seq}
If $n$ is even, there exists a sequence $\sigma_j$ of $n-1$ permutations of $\{1,\cdots n\}$ such that $w_i = \frac{n^2}{2}-1$ for $i=1,\cdots \frac{n}{2}$ and
$w_i = \frac{n^2}{2}$ for $i=\frac{n}{2}+1,\cdots, n$. 
\end{lemma}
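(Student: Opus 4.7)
The plan is to give an explicit construction that leverages the uniform column sums produced by the full set of cyclic shifts. Recall from the proof of the preceding theorem that the $n$ cyclic permutations $r_1, \ldots, r_n$, where $r_j(i) = ((i+j-2) \bmod n) + 1$, together contribute $\sum_{j=1}^n r_j(i) = \frac{n(n+1)}{2}$ to every column $i$. The idea is to drop exactly two of these shifts and replace them with a single corrective permutation, for a total of $(n-2)+1 = n-1$ permutations.

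I propose to omit $r_1$ (the identity) and $r_{n/2+1}$ (the half-cycle shift, which sends $i \mapsto i + n/2$ for $i \leq n/2$ and $i \mapsto i - n/2$ for $i > n/2$), and to add the single permutation $\sigma$ defined by
\[
\sigma(i) = 2i-1 \text{ for } i = 1, \ldots, n/2, \qquad \sigma(i) = 2i-n \text{ for } i = n/2+1, \ldots, n.
\]
The first half of $\sigma$ enumerates the odd integers $1, 3, \ldots, n-1$ and the second half enumerates the even integers $2, 4, \ldots, n$, so $\sigma$ is a bona fide permutation of $\{1, \ldots, n\}$.

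Verification reduces to a direct calculation. Since $r_1(i) + r_{n/2+1}(i)$ equals $2i + n/2$ for $i \leq n/2$ and $2i - n/2$ for $i > n/2$, the $n-2$ remaining cyclic shifts contribute column sums $\frac{n(n+1)}{2} - r_1(i) - r_{n/2+1}(i)$, which simplifies to $\frac{n^2}{2} - 2i$ in the first half and $\frac{n^2}{2} + n - 2i$ in the second half. Adding $\sigma(i)$ then gives $\frac{n^2}{2} - 1$ for $i \leq n/2$ and $\frac{n^2}{2}$ for $i > n/2$, exactly as claimed.

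The only real obstacle is guessing the right shifts to drop and the right corrective $\sigma$; both are motivated by noticing that $r_1(i) + r_{n/2+1}(i)$ is piecewise linear in $i$ with slope $2$, which can be cancelled by a permutation whose values are also piecewise linear in $i$ with slope $2$. Once the pair $(r_1, r_{n/2+1})$ and the permutation $\sigma$ are in hand, no further machinery is needed and the claim follows; one can sanity-check the construction on $n = 2, 4, 6, 8$ to confirm that it produces valid permutations with the prescribed column sums.
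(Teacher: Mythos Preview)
Your proof is correct and follows essentially the same construction as the paper: remove the two cyclic shifts $r_1$ and $r_{n/2+1}$ and replace them with the single corrective permutation $\sigma$ (the paper calls it $\tilde{\sigma}$). Your formula $\sigma(i)=2i-n$ on the second half is in fact the right one; the paper's printed ``$n-2i$'' there is a sign typo, as the subsequent column-sum computation in the paper confirms.
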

\begin{proof}
Recall the cyclic permutations denoted as $r_i$. Consider the index set $S = \{i: 2\leq i \leq n, i \neq n/2+1\}$.
Let us compute $\sum_{j\in S}r_j(i)$. Since $r_1(i) = (1,2,...,n)$ and $r_{n/2+1} = (n/2+1,n/2+2,...,n/2)$,
 $\sum_{j\in S}^{n-1}r_j(i) = n(n+1)/2 - r_1(i) - r_{n/2+1}(i)$ is equal to
 $ n(n+1)/2- i - (n/2+i) = n^2/2-2i$ for $i = 1,\cdots, n/2$ and equal to   $ n(n+1)/2- i - (i-n/2) = n^2/2-(2i-n)$ for $i = n/2+1,\cdots, n$.
 Let $\tilde{\sigma}$ be the permutation defined as $\tilde{\sigma}(i) = 2i-1$ for $i = 1\cdots n/2$ and $\tilde{\sigma}(i) = n-2i$ for $i=n/2+1\cdots, n$.
  Define the $(n-1)$-set of permutations $\{\sigma_i$\} as $\tilde{\sigma}$ plus the cyclic permutations with index in $S$, we get
 $\sum_{j=1}^{n-1} \sigma_j(i) = n^2/2-1$ for $i = 1,\cdots, n/2$ and $\sum_j \sigma_j(i) = n^2/2$ for $i=n/2+1,\dots, n$. 
\end{proof}

\begin{corollary}
If $n$ is even and $k$ is odd, there does not exists a $k$-set of permutations such that $w_i = w_j$ for all $i,j$.
If $k\geq n-1$, then there exists $k$ permutations such that $w_i-w_j\leq 1$ for all $i,j$.
\end{corollary}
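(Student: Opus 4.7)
The plan is to treat the two claims separately, exploiting the elementary identity
\[ \sum_{i=1}^n w_i \;=\; \sum_{j=1}^k \sum_{i=1}^n \sigma_j(i) \;=\; k\cdot\frac{n(n+1)}{2}, \]
which holds for any $k$-set of permutations.

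For the non-existence claim I would use an integrality argument. If all $w_i$ took a common value $c$, the identity above would give $nc = kn(n+1)/2$, so $c = k(n+1)/2$. Each $w_i$ is a sum of integers and hence an integer, forcing $c$ to be an integer. But with $n$ even the factor $n+1$ is odd, and with $k$ odd the product $k(n+1)$ is odd, so $k(n+1)/2$ is a half-integer --- a contradiction.

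For the existence claim I would reduce to cases already handled, splitting on parity. If $k$ is even (resp.\ if $n$ is odd and $k\geq n-1$), the explicit construction in the proof of Theorem \ref{thm:k=even} (resp.\ the corresponding decomposition $k = 2t+nu$ used for Corollary 2) produces $k$ permutations with all $w_i$ equal, so $w_i - w_j \leq 1$ holds trivially. The genuinely new case is $n$ even, $k$ odd, $k\geq n-1$. Here I would start with the $(n-1)$-set of permutations guaranteed by Lemma \ref{lem:n-1seq}, whose $w_i$ already satisfy $w_i - w_j \leq 1$, and pad. Since $n-1$ and $k$ are both odd, $k-(n-1) = 2s$ for some nonnegative integer $s$. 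Appending $s$ copies of the identity $(1,2,\ldots,n)$ and $s$ copies of the reverse $(n,n-1,\ldots,1)$ adds $s\bigl(i + (n+1-i)\bigr) = s(n+1)$ to every $w_i$ --- a quantity independent of $i$ --- so the bound survives the padding.

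The only (mild) obstacle is the parity bookkeeping: one has to notice that precisely in the case where Lemma \ref{lem:n-1seq} is needed ($n$ even, and equality $w_i = w_j$ ruled out by the first part), the deficit $k-(n-1)$ is automatically even, which is exactly what makes identity/reverse padding preserve the difference bound.
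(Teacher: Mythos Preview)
Your proof is correct and uses essentially the same ingredients as the paper: the divisibility/integrality argument for non-existence, Lemma~\ref{lem:n-1seq} for the hard case $n$ even, and padding with identity/reverse pairs to reach larger $k$. The paper organizes the existence argument as a base case at $k=n-1$ followed by induction $k\mapsto k-2$, whereas you dispose of the cases where all $w_i$ can be made equal by invoking Theorem~\ref{thm:k=even} and its corollaries directly and reserve Lemma~\ref{lem:n-1seq} plus padding for the genuinely new case; the two arguments are minor reorganizations of each other.
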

\begin{proof}
If $n$ is even and $k$ is odd, $\sum_i w_i =kn(n+1)/2$ is not divisible by $n$ as $k$ and $n+1$ are both odd. This means it is not possible for $w_i = w_j$ for all $i,j$. 
If $n$ is odd, the case $k=n-1$ can be achieved with $k/2$ permutations $(1,\cdots , n)$ and $k/2$ permutations $(n,n-1,\dots ,1)$.
If $n$ is even, the case $k=n-1$ follows from Lemma \ref{lem:n-1seq}.
If $k > n$, it follows by induction from the $k-2$ case and adding the two permutations $(1,\cdots , n)$ and  $(n,n-1,\cdots ,1)$.
\end{proof}

\begin{lemma}\label{lem:max2}
If $w_1+w_2 = v_1+v_2$ and $|w_2 - w_1| \geq |v_2 - v_1|$, then
$(x + w_1)(x+w_2) \leq (x+v_1)(x+v_2)$.
\end{lemma}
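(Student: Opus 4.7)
The plan is to expand both products and observe that the hypothesis reduces the claimed inequality to a comparison of constant terms only. Writing
\[
(x+w_1)(x+w_2) = x^2 + (w_1+w_2)x + w_1w_2,
\]
\[
(x+v_1)(x+v_2) = x^2 + (v_1+v_2)x + v_1v_2,
\]
and using $w_1+w_2 = v_1+v_2$, the $x^2$ and $x$ terms cancel between the two sides. So the inequality $(x+w_1)(x+w_2) \leq (x+v_1)(x+v_2)$ is equivalent to $w_1 w_2 \leq v_1 v_2$.

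To prove this last inequality, I would use the standard identity
\[
4ab = (a+b)^2 - (a-b)^2,
\]
applied to both pairs. This gives $4w_1w_2 = (w_1+w_2)^2 - (w_1-w_2)^2$ and $4v_1v_2 = (v_1+v_2)^2 - (v_1-v_2)^2$. Since $w_1+w_2 = v_1+v_2$, the difference $4(v_1v_2 - w_1w_2)$ equals $(w_1-w_2)^2 - (v_1-v_2)^2 = |w_2-w_1|^2 - |v_2-v_1|^2$, which is nonnegative by the second hypothesis $|w_2-w_1| \geq |v_2-v_1|$. Hence $w_1w_2 \leq v_1v_2$, completing the proof.

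There is essentially no obstacle here; the lemma is an elementary algebraic identity dressed up with an inequality, and the key observation is just that under a fixed sum, the product of two numbers is a decreasing function of their spread. I would present the argument in one short paragraph.
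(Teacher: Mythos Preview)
Your proof is correct and follows essentially the same approach as the paper: both reduce the inequality to comparing the constant terms $w_1w_2$ and $v_1v_2$, and both use that under a fixed sum the product is a decreasing function of the spread. The paper phrases this last step as ``the function $t(y-t)$ has a maximum at $y/2$'' (with $y=w_1+w_2$), while you use the identity $4ab=(a+b)^2-(a-b)^2$; these are equivalent formulations of the same elementary fact, and if anything your version makes the comparison with the $v_i$ pair slightly more explicit.
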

\begin{proof}
Let $y = w_1+w_2$. Then
$(x + w_1)(x+w_2) = x^2 + yx + w_1(y-w_1)$. Since the function $x(y-x)$ has a maximum at $\frac{y}{2}$, this implies that $(x + w_1)(x+w_2)$ is maximized when $w_1 = w_2$.
\end{proof}

\begin{lemma}\label{lem:maxperm}
If $k \geq n-1$, then for the set permutations $\sigma_j$ that maximizes $w(n,k)$, the corresponding $w_i$ must satisfy $w_i-w_j \leq 1$ for all $i,j$.
If in addition, $n$ is odd or $k$ is even, then $w_i = w_j$ for all $i,j$.
\end{lemma}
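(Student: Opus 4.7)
The plan is to argue by contradiction using the pairwise exchange supplied by Lemma~\ref{lem:max2}. Suppose $\sigma^* = (\sigma_1^*, \ldots, \sigma_k^*)$ maximizes $w(n,k)$ with row sums $w_i^* := \sum_j \sigma_j^*(i)$, and assume for contradiction that $w_i^* - w_j^* \geq 2$ for some $i, j$. Since $a_i = (a_1 - d) + id$ is arithmetic, $\sum_j a_{\sigma_j(i)} = c + d w_i$ with $c = k(a_1 - d)$, so $w(n,k) = \prod_i (c + d w_i)$; crucially, $\sum_i w_i = kn(n+1)/2 =: s$ is independent of the permutations chosen.

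The workhorse is that Lemma~\ref{lem:max2} implements pairwise ``Robin Hood'': whenever $w_p \geq w_q + 2$, replacing $(w_p, w_q)$ by $(w_p - 1, w_q + 1)$ preserves the sum but strictly narrows the gap, so applying Lemma~\ref{lem:max2} with $x = c$ to this pair gives $(c + d w_p)(c + d w_q) < (c + d(w_p-1))(c + d(w_q+1))$, and hence strictly increases $\prod_i (c + d w_i)$. Iterating on $w^*$ (at each stage picking coordinates achieving the current max and min whose difference is still at least $2$) terminates after finitely many steps at an integer vector $w'$ with $\max_i w'_i - \min_i w'_i \leq 1$, $\sum_i w'_i = s$, and $\prod_i(c + d w_i^*) < \prod_i(c + d w'_i)$. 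Now invoke the preceding corollary: since $k \geq n-1$, there is an \emph{achievable} configuration $\tilde\sigma$ whose row sums $\tilde w$ also satisfy $\tilde w_i - \tilde w_j \leq 1$. Both $w'$ and $\tilde w$ are integer vectors with sum $s$ and spread at most $1$, so both consist of $s \bmod n$ copies of $\lceil s/n \rceil$ and the remaining $n - (s \bmod n)$ copies of $\lfloor s/n \rfloor$. By symmetry of the product, $\prod_i (c + d w'_i)$ equals $w(n, k)$ evaluated at $\tilde\sigma$, contradicting the maximality of $\sigma^*$.

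For the second assertion, $s/n = k(n+1)/2$ is an integer precisely when $n$ is odd or $k$ is even; in that case the only integer vector with sum $s$ and spread $\leq 1$ is the constant vector with every entry $s/n$, forcing $w_i^* = k(n+1)/2$ for all $i$. The only delicate point in the argument is that the Robin Hood chain need not stay on the set of permutation-realizable row sum vectors, so the contradiction is delivered not by $w'$ itself but by identifying its multiset with that of the achievable $\tilde w$ from the corollary; once this is noticed, the rest is mechanical.
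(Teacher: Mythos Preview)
Your argument is correct and follows the same Robin Hood strategy via Lemma~\ref{lem:max2} as the paper. You are in fact more explicit than the paper on the one delicate point---that the balanced vector $w'$ reached after the transfers need not itself be realizable by a $k$-tuple of permutations---which you resolve by matching its multiset with the achievable configuration $\tilde w$ supplied by the preceding corollary.
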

\begin{proof}
If $w_i-w_j > 1$ for some pair $(w_i, w_j)$, by Lemma \ref{lem:max2} we can reduce $w_i$ and increase $w_j$ by $1$ repeatedly  until $w_i - w_j \leq 1$ for all $i,j$ without increasing $w_{\max}(n,k) = \prod_{i=1}^n \sum_{j=1}^k a_{\sigma_j(i)} = \prod_{i=1}^{n} k(a_1-d)+w_id$. 
If $n$ is even and $k$ is odd, $\sum_i w_i$ is not divisible by $n$ and the only set of $w_i$ such that $w_i - w_j \leq 1$ for all $i,j$ is the one described in Lemma \ref{lem:n-1seq}.
If $n$ is odd or $k$ is even, there exists a set of permutations corresponding to $w_{\max}(n,k)$ such that $w_i = w_j$ by Theorem \ref{thm:k=even}.
\end{proof}

\begin{theorem}\label{thm:k=odd}
If $n$ is even and $k$ is odd such that $k \geq n-1$, then 
\[ w_{\max}(n,k) = \left(ka_1 + \left(\frac{k(n-1)-1}{2}\right)d\right)^{n/2}  \left(ka_1 + \left(\frac{k(n-1)+1}{2}\right)d\right)^{n/2} \]
\end{theorem}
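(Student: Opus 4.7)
The plan is to use Lemma \ref{lem:maxperm} to restrict the possible multisets $\{w_i\}$ to essentially one, then carry out the direct computation, then exhibit a realizing $k$-set by extending the construction in Lemma \ref{lem:n-1seq}.

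First I would rewrite the objective in terms of the $w_i$. Using $a_i = (a_1-d) + id$, any $k$-set of permutations gives
\[ w(n,k) = \prod_{i=1}^{n} \bigl(k(a_1-d) + w_i d\bigr), \qquad w_i := \sum_{j=1}^{k} \sigma_j(i). \]
Since $n$ is even and $k$ is odd and $k \geq n-1$, Lemma \ref{lem:maxperm} says a maximizing configuration has $|w_i - w_j| \leq 1$, so the $w_i$ take at most two consecutive integer values, say $c$ with multiplicity $p$ and $c+1$ with multiplicity $n-p$. The total $\sum_i w_i = k\cdot n(n+1)/2$ is fixed, and reducing mod $n$ with $n=2m$ gives $km(n+1) \equiv km \equiv m \pmod{n}$ because $k$ is odd; this forces $n-p \equiv n/2 \pmod n$, hence $p = n/2$. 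Solving $nc + n/2 = kn(n+1)/2$ yields $c = (k(n+1)-1)/2$ and $c+1 = (k(n+1)+1)/2$.

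Next I would substitute these two values into the product. For the $n/2$ indices with $w_i = c$,
\[ k(a_1 - d) + cd = ka_1 + \left(\frac{k(n-1)-1}{2}\right)d, \]
and the other $n/2$ factors equal $ka_1 + \bigl((k(n-1)+1)/2\bigr)d$. Multiplying gives the upper bound claimed in the theorem.

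Finally, for a matching realization I would start from the $(n-1)$-set of Lemma \ref{lem:n-1seq}, whose $w_i$ take the values $n^2/2 - 1$ (for $i \le n/2$) and $n^2/2$ (for $i > n/2$). Since $k - (n-1)$ is even, adjoin $(k-n+1)/2$ copies each of the identity $(1,\dots,n)$ and its reverse $(n,\dots,1)$; each such pair adds $n+1$ to every $w_i$, preserving the $0/1$ split. A short check shows the resulting values are exactly $(k(n+1) \mp 1)/2$, so $w(n,k)$ attains the upper bound. The only real subtlety is the mod-$n$ argument that pins $p = n/2$; once that is in hand, the rest is bookkeeping.
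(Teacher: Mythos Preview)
Your proposal is correct and follows essentially the same approach as the paper: use Lemma~\ref{lem:maxperm} to force the maximizing $w_i$ into two consecutive values, pin those values down via the total $\sum_i w_i = kn(n+1)/2$, substitute into $\prod_i(k(a_1-d)+w_id)$, and realize the bound by augmenting the $(n-1)$-set of Lemma~\ref{lem:n-1seq} with $(k-n+1)/2$ copies each of $(1,\dots,n)$ and $(n,\dots,1)$. Your explicit mod-$n$ argument that $p=n/2$ is a welcome clarification of a step the paper leaves implicit.
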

\begin{proof}
Note that $k$ can be written as $k = 2t+(n-1)$.
As a consequence of Lemmas \ref{lem:n-1seq}, \ref{lem:maxperm}, the value $w_{\max}(n,k)$ is achieved with $t$ copies of $(1,...,n)$, $t$ copies of $(n,...,1)$, $\tilde{\sigma}$ and the cyclic permutations with index in $S$.
Then $w_i = t(n+1) + n^2/2-1 = \frac{k(n+1)-1}{2}$ for $i=1,\cdots, n/2$, and $w_i = t(n+1)+n^2/2 = \frac{k(n+1)+1}{2}$ for $i = n/2+1,\cdots, n$.
Thus 
\begin{multline*}w_{\max}(n,k) = \prod_{i=1}^n k(a_1-d)+w_id \\ = \left(k(a_1-d)+ \frac{d(k(n+1)-1)}{2}\right)^{n/2} \left(k(a_1-d)+ \frac{d(k(n+1)+1)}{2}\right)^{n/2}
\end{multline*}
and the conclusion follows.
\end{proof}

Theorems \ref{thm:k=even} and \ref{thm:k=odd} show that the value of $w_{\max}(n,k)$ and the corresponding maximizing set of permutations can be explicitly found when $k\geq n-1$ or $k$ is  even.

\subsection{The special case {$a_i = i$}}
Consider the special case where the sequence $a_i$ is just the first $n$ positive integers, i.e. we have $v(n,k) = \sum_{i=1}^n \prod_{j=1}^k \sigma_j(i)$ and
$w(n,k) = \prod_{i=1}^n \sum_{j=1}^k \sigma_j(i)$. The values of $v_{\min}(n,k)$ and $w_{\max}(n,k)$ can be found in OEIS \cite{oeis} sequence A260355 (\url{https://oeis.org/A260355})
and sequence A331988 (\url{https://oeis.org/A331988}) respectively.

\begin{theorem}\label{thm:special_case}
If $k = 2t+nu$ for nonnegative integers $t$ and $u$, then $w_{\max}(n,k) = \left(\frac{k(n+1)}{2}\right)^n$. In particular, if $k$ is even or if  $n$ is odd and $k\geq n-1$, then  $w_{\max}(n,k) = \left(\frac{k(n+1)}{2}\right)^n$.
\end{theorem}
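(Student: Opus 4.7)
The plan is to recognize that Theorem \ref{thm:special_case} is a direct specialization of Theorem \ref{thm:k=even} to the arithmetic progression $a_i = i$. For this sequence, $a_1 = 1$, $a_n = n$, and common difference $d = 1$, so $a_1 + a_n = n+1$ and the general bound $\left(\frac{k(a_1 + a_n)}{2}\right)^n$ from Theorem \ref{thm:k=even} collapses to $\left(\frac{k(n+1)}{2}\right)^n$. The maximizing set of permutations is likewise inherited: $t$ copies of the identity $(1, 2, \ldots, n)$, $t$ copies of its reverse $(n, n-1, \ldots, 1)$, and $u$ copies of each cyclic shift $r_i$, which together give column sums $w_i = k(n+1)/2$ for every $i$. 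This matches the AM-GM upper bound from the last item of Lemma \ref{lem:ubwmax}, certifying equality in the main assertion.

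The ``in particular'' clause then reduces to a small numerical representability check: when can we write $k = 2t + nu$ with $t, u \geq 0$? If $k$ is even, take $(t, u) = (k/2, 0)$. If $n$ is odd and $k \geq n-1$, then because $\gcd(2, n) = 1$ the Sylvester--Frobenius theorem guarantees such a representation exists for every $k \geq (2-1)(n-1) = n - 1$. Concretely: when $k$ is even, again use $u = 0$; when $k$ is odd, note that $k \geq n$ (since $n-1$ is even but $k$ is odd, $k = n-1$ is excluded), so $(t, u) = ((k-n)/2, 1)$ works. I do not anticipate any real obstacle here; the entire theorem is bookkeeping on top of Theorem \ref{thm:k=even}, with the only content being to verify that the two stated sufficient conditions both fall inside the representability set $\{2t + nu : t, u \in \mathbb{Z}_{\geq 0}\}$.
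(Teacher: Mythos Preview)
Your proposal is correct and aligns with the paper's treatment: the paper states Theorem~\ref{thm:special_case} without proof, leaving it as the immediate specialization of Theorem~\ref{thm:k=even} (with $a_1+a_n=n+1$) together with its two corollaries, which is exactly the route you take. Your explicit representability check for the ``in particular'' clause (including the Sylvester--Frobenius justification and the concrete $(t,u)=((k-n)/2,1)$ for odd $k\geq n$) simply spells out what the paper leaves implicit in those corollaries.
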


\begin{theorem}
If $n$ is even and $k$ is odd such that  $k \geq n-1$, then
$w_{\max}(n,k) = \left(\frac{k^2(n+1)^2 -1}{4}\right)^{n/2}$.
\end{theorem}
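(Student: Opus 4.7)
The plan is to derive this statement as a direct specialization of Theorem \ref{thm:k=odd}. In the setup of Section \ref{sec:ap} the sequence $a_i = i$ corresponds to $a_1 = 1$ and common difference $d = 1$. Since $n$ is even, $k$ is odd, and $k \geq n-1$, the hypotheses of Theorem \ref{thm:k=odd} are satisfied, so I can quote its conclusion and simply simplify the resulting expression.

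First I would substitute $a_1 = 1$ and $d = 1$ into the formula
\[ w_{\max}(n,k) = \left(ka_1 + \tfrac{k(n-1)-1}{2}d\right)^{n/2}\left(ka_1 + \tfrac{k(n-1)+1}{2}d\right)^{n/2}. \]
This yields $w_{\max}(n,k) = \bigl(k + \tfrac{k(n-1)-1}{2}\bigr)^{n/2}\bigl(k + \tfrac{k(n-1)+1}{2}\bigr)^{n/2}$. The next step is to combine the $k$ outside each parenthesis with the linear term inside, which gives the cleaner form $\bigl(\tfrac{k(n+1)-1}{2}\bigr)^{n/2}\bigl(\tfrac{k(n+1)+1}{2}\bigr)^{n/2}$. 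Pairing the two factors with matching exponents produces $\bigl(\tfrac{(k(n+1)-1)(k(n+1)+1)}{4}\bigr)^{n/2}$, and then the difference-of-squares identity $(A-1)(A+1) = A^2 - 1$ with $A = k(n+1)$ delivers exactly $\left(\tfrac{k^2(n+1)^2 - 1}{4}\right)^{n/2}$, which is the claimed value.

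There is no real obstacle here: the content is entirely in Theorem \ref{thm:k=odd} (together with Lemmas \ref{lem:n-1seq} and \ref{lem:maxperm} that supported it), and the present theorem is only a clean reformulation in the integer case. The only thing worth double-checking is the arithmetic consolidation $k + \tfrac{k(n-1)\pm 1}{2} = \tfrac{k(n+1)\pm 1}{2}$, which is a one-line computation; everything else is a direct substitution.
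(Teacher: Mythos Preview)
Your proposal is correct and matches the paper's intended approach: the theorem is stated in the paper without an explicit proof precisely because it is the direct specialization of Theorem~\ref{thm:k=odd} to $a_1=1$, $d=1$, and your algebraic simplification is exactly the computation needed.
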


For example, Theorem \ref{thm:special_case} shows that $w_{\max}(3,k) = 8k^3$ for $k>1$. More details about $v_{\min}$ and $w_{\max}$ for this special case, including tables of values, can be found in Ref. \cite{wu:rearrange:arxiv2015}.

\section{The special case when $a_i$ is a geometric progression}
We can get analogous results for $v_{\min}$ if the sequence $a_i$ is a geometric progression of the form $a_i = cd^{b_i}$ for some constants $c, d\geq 1$ and an arithmetic progression $b_i$ of $n$ nonnegative numbers.
This is due to the fact that $\alpha_i \stackrel{\text{def}}{=} \log(a_i) = \log(c)+\log(d)b_i$ is an arithmetric progression of nonnegative numbers. Furthermore, if there exists permutations $\sigma_i$ such that
$\sum_i \alpha_{i\sigma_i(j)} = \sum_i \alpha_{i\sigma_i(1)}$, then
$\prod_i a_{i\sigma_i(j)} = \prod_i a_{i\sigma_i(1)}$.
This implies that we get the following analogous result to Theorem \ref{thm:k=even}.

\begin{theorem} \label{thm:geometric_k=even}
If $k = 2t+nu$ for nonnegative integers $t$ and $u$, then $v_{\min}(n,k) = n\prod_{i=1}^n a_i^{k/n}
= nc^kd^{\frac{k(b_1+b_n)}{2}}$.
\end{theorem}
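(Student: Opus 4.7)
The plan is to combine the lower bound of Lemma \ref{lem:vminbound} with a construction of $k$ permutations that attains the AM-GM equality in that bound. Concretely, Lemma \ref{lem:vminbound} already gives $v(n,k) \geq n\prod_i a_i^{k/n}$ for every $k$-set of permutations, so it suffices to exhibit one $k$-set $\sigma_1,\ldots,\sigma_k$ for which $\prod_{j=1}^k a_{\sigma_j(i)}$ is independent of $i$; by the equality case of AM-GM this forces $v(n,k) = n\prod_i a_i^{k/n}$.

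The reduction is by logarithms, as already flagged in the paragraph preceding the theorem. Setting $\alpha_i = \log a_i = \log c + b_i \log d$, the sequence $(\alpha_i)$ inherits arithmetic-progression structure from $(b_i)$, and $\prod_{j=1}^k a_{\sigma_j(i)}$ is constant in $i$ precisely when $\sum_{j=1}^k \alpha_{\sigma_j(i)}$ is. The latter is exactly the equality of the row sums $w_i$ considered in Section \ref{sec:ap}. The construction used in the proof of Theorem \ref{thm:k=even} --- namely $t$ copies of the identity permutation $(1,\ldots,n)$, $t$ copies of the reversal $(n,\ldots,1)$, and $u$ copies of each cyclic permutation $r_1,\ldots,r_n$, which is well defined precisely because $k = 2t+nu$ --- produces such a $k$-set for any arithmetic progression. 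Transplanting this construction from $(a_i)$ to $(\alpha_i)$ therefore yields permutations with equal row sums in the $\alpha$-sense, which is exactly what AM-GM equality requires.

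It remains only to rewrite the common value in the form stated. Using $a_i = c d^{b_i}$ and $\sum_{i=1}^n b_i = n(b_1+b_n)/2$,
\[
n\prod_{i=1}^n a_i^{k/n} = n c^k d^{(k/n)\sum_i b_i} = n c^k d^{k(b_1+b_n)/2},
\]
matching the displayed expression. There is no real obstacle; the only point to check is that the construction from Theorem \ref{thm:k=even} depends only on the arithmetic-progression structure and on the decomposition $k=2t+nu$, both of which continue to hold after passing to $(\alpha_i)$, so it transfers verbatim.
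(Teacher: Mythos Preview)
Your proposal is correct and follows essentially the same approach as the paper: the paper's justification (given in the paragraph immediately preceding the theorem rather than in a separate proof) is precisely to pass to $\alpha_i = \log a_i$, observe this is an arithmetic progression, and invoke the permutation construction of Theorem~\ref{thm:k=even} to make the sums $\sum_j \alpha_{\sigma_j(i)}$ constant in $i$, hence the products $\prod_j a_{\sigma_j(i)}$ constant, so that the AM--GM lower bound of Lemma~\ref{lem:vminbound} is attained. Your write-up simply fills in the details the paper leaves implicit, including the final rewriting via $\sum_i b_i = n(b_1+b_n)/2$.
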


\section{A generalization of the rearrangement inequality} \label{sec:generalizedRI}
In Ref. \cite{Day1972}, Eqs (\ref{eqn:rearrange1}-\ref{eqn:rearrange2}) are generalized as follows:

\begin{theorem} \label{thm:day1972}
Let $f$ be real valued function of 2 variables defined on $I_a\times I_b$.  If
$$f(x_2,y_2) - f(x_2,y_1) - f(x_1,y_2) + f(x_1,y_1) \geq 0$$ for all $x_1\leq x_2$ in $I_a$ and $y_1\leq y_2$ in $I_b$,
then 
\begin{equation} \label{eqn:rearrange-generalized}
\sum_i f(a_i, b_{n-i+1}) \leq \sum_i f(a_i, b_{\sigma(i)}) \leq \sum_{i} f(a_i, b_i)
\end{equation}
for all sequences $a_1\leq a_2\cdots \leq a_n$ in $I_a$,   $b_1\leq b_2\cdots \leq b_n$ in $I_b$, and all permutation $\sigma$ of $\{1,\cdots, n\}$ .
\end{theorem}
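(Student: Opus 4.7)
The plan is to adapt the classical swap argument used for the two-sequence rearrangement inequality, replacing the product $x_i y_j$ with the generic function $f(x_i, y_j)$. The hypothesis that $f(x_2,y_2) - f(x_2,y_1) - f(x_1,y_2) + f(x_1,y_1) \geq 0$ for $x_1 \leq x_2$, $y_1 \leq y_2$ (a supermodularity / 2-increasing condition) is exactly the ingredient that makes a pairwise swap monotone in the right direction.

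The central lemma I would prove is the following local exchange statement: for any permutation $\sigma$ and indices $i < j$ with $\sigma(i) > \sigma(j)$, the permutation $\sigma'$ obtained by swapping the values of $\sigma$ at positions $i$ and $j$ satisfies
\[
\sum_k f(a_k, b_{\sigma(k)}) \;\leq\; \sum_k f(a_k, b_{\sigma'(k)}).
\]
To see this, note that all terms with $k \notin \{i,j\}$ are identical on both sides, so it suffices to compare $f(a_i, b_{\sigma(i)}) + f(a_j, b_{\sigma(j)})$ with $f(a_i, b_{\sigma(j)}) + f(a_j, b_{\sigma(i)})$. Applying the hypothesis with $x_1 = a_i$, $x_2 = a_j$, $y_1 = b_{\sigma(j)}$, $y_2 = b_{\sigma(i)}$ (legitimate since $i<j$ gives $x_1 \leq x_2$ and $\sigma(j) < \sigma(i)$ gives $y_1 \leq y_2$) yields exactly the inequality needed.

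For the upper bound, I would argue by finite induction on the number of inversions of $\sigma$: if $\sigma$ is not the identity there exist $i<j$ with $\sigma(i) > \sigma(j)$, and after swapping, the resulting permutation has strictly fewer inversions and a sum that is no smaller. Iterating terminates at the identity, proving $\sum_i f(a_i, b_{\sigma(i)}) \leq \sum_i f(a_i, b_i)$. For the lower bound, I run the same machinery in reverse: if $\sigma$ is not the reversal $i \mapsto n-i+1$, there exist $i<j$ with $\sigma(i) < \sigma(j)$, and the analogous swap produces a permutation with more inversions and a sum that is no larger (apply the exchange lemma to the swapped permutation, viewed as the ``before'' state). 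Iterating terminates at the reverse permutation.

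I do not expect any genuine obstacle here; the argument is essentially bookkeeping once the exchange lemma is set up. The only subtle point to watch is the direction of the inequality in the exchange lemma, making sure that the roles of $(x_1,x_2,y_1,y_2)$ are assigned so that both $x_1 \leq x_2$ and $y_1 \leq y_2$ hold, which forces $y_1 = b_{\sigma(j)}$ and $y_2 = b_{\sigma(i)}$ (rather than the other way). After that, the termination of the bubble-sort style induction and the finite number of inversions make the conclusion immediate.
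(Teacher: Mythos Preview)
Your proposal is correct. The paper does not prove this theorem directly---it is quoted from Day (1972)---but the paper's own proof of its generalization (Theorem~\ref{thm:newRI}) uses essentially the same exchange argument you describe: it phrases the induction via the permutahedron (weak Bruhat) order on $S_n$, swapping \emph{adjacent} inversions and using the hypothesis as Eq.~(\ref{eqn:order1}) to compare values along covering relations, with the identity and the reversal as the greatest and least elements. Your version differs only cosmetically in that you allow swapping an arbitrary inversion $(i,j)$ and track the total inversion count; this is the same mechanism, and your termination claim is fine since such a swap strictly decreases the inversion number (by $1$ for the pair $(i,j)$, plus $2$ for each $k$ with $i<k<j$ and $\sigma(j)<\sigma(k)<\sigma(i)$).
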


Theorem \ref{thm:day1972} unifies Eq. (\ref{eqn:rearrange1}) and Eq. (\ref{eqn:rearrange2}) as they can be derived by choosing $f(x,y) = xy$ and $f(x,y) = -\log(x+y)$ respectively. The assumption $x_i\geq 0$ and $y_i \geq 0$ 
in Eq. (\ref{eqn:rearrange2}) are used to ensure that the log is well-defined.
In this section, we generalize this theorem by replacing the summation and subtraction with a general function and real intervals with partially ordered sets and give a more direct way to unify Eq. (\ref{eqn:rearrange1}) and Eq. (\ref{eqn:rearrange2}).

\begin{definition}
For a function $g$ with $n$ arguments and for $i\neq j$ define $g_{ij}(x,y,z)$ as $g(z)$ but with the $i$-th and $j$-th argument replaced with $x$ and $y$ respectively. Similarly, we define $g_i(x,z)$ as $g(z)$ except with the $i$-th argument replaced with $x$.
\end{definition}
For instance if $g(z_1,z_2,z_3)$ is a function of 3 arguments, then $g_{1,3}(x,y,(z_1,z_2,z_3)) = g(x,z_2,y)$ and $g_2(x,(z_1,z_2,z_3)) = g(z_1,x,z_3)$.

\begin{definition}
A function $g$ on $n$ variables satisfies property $S$ if the value
$g(x_{\sigma(1)}, \cdots , x_{\sigma(n)})$ does not depend on the permutation $\sigma\in S_n$.
\end{definition}

\begin{theorem} \label{thm:newRI}
Let $I_a$ and $I_b$ be two sets with corresponding partial orders $\preceq_a$ and $\preceq_b$.
Let $f: I_a\times I_b \rightarrow I_c$ be a function of 2 variables defined on $I_a\times I_b$.  Let $g:I_c^n\rightarrow I_d$ be a function of n variables defined on $I_c^n$. Let $\preceq_d$ be a partial order on $I_d$.

If
\begin{equation}
g_{ij}(f(x_1,y_1), f(x_2,y_2),z) \succeq_d g_{ij}(f(x_2,y_1), f(x_1,y_2),z) \label{eqn:order1}
\end{equation}
 for all $x_1\preceq_a x_2$ in $I_a$ and $y_1 \preceq_b y_2$ in $I_b$ and all pairs of indices $i < j$ and all $z$,
then
\begin{equation}g(\{f(a_{i}, b_{n-i+1})|i = 1, ...,n\}) \\ \preceq_d g(\{f(a_i, b_{\sigma(i)})|i = 1,...,n\}) \preceq_d g(\{f(a_{i}, b_{i})|i = 1,...,n\}) \label{eqn:newRI} \end{equation}
for all sequences $a_1\preceq_a a_2\cdots \preceq_a a_n$ in $I_a$,   $b_1\preceq_b b_2\cdots \preceq_b b_n$ in $I_b$, and all permutation $\sigma \in S_n$.
\end{theorem}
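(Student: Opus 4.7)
The plan is a bubble-sort induction on the number of inversions of $\sigma$, with each swap justified by one application of hypothesis~(\ref{eqn:order1}). I read $g(\{\cdot\})$ as $g$ applied to its arguments as a multiset, i.e.\ as requiring property $S$; this interpretation is what lets the hypothesis drive the induction (without it, the argument below breaks at the matching step, and in fact one can write down small counterexamples to the theorem statement when $g$ is not symmetric in its positions).

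For the upper bound, suppose $\sigma$ has an inversion: indices $i < j$ with $\sigma(i) > \sigma(j)$. Let $\tau$ be obtained from $\sigma$ by exchanging the values $\sigma(i)$ and $\sigma(j)$, so $\tau$ has strictly fewer inversions. Apply~(\ref{eqn:order1}) at this pair $(i,j)$, with $x_1 = a_i$, $x_2 = a_j$ (valid, since the $a$-sequence is sorted), with $y_1 = b_{\sigma(j)}$, $y_2 = b_{\sigma(i)}$ (valid, since $\sigma(j) < \sigma(i)$), and with $z$ any tuple that agrees with $(f(a_k, b_{\sigma(k)}))_k$ outside positions $i,j$. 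The LHS of~(\ref{eqn:order1}) then has $f(a_i, b_{\tau(i)})$ at position $i$ and $f(a_j, b_{\tau(j)})$ at position $j$, and so equals $g(\{f(a_k, b_{\tau(k)}) \mid k\})$. The RHS has $f(a_j, b_{\sigma(j)})$ at position $i$ and $f(a_i, b_{\sigma(i)})$ at position $j$; together with the entries of $z$ at the other positions it forms exactly the multiset $\{f(a_k, b_{\sigma(k)}) \mid k\}$, so by property $S$ it equals $g(\{f(a_k, b_{\sigma(k)}) \mid k\})$. Thus~(\ref{eqn:order1}) gives the step inequality $g(\{f(a_k, b_{\tau(k)})\}) \succeq_d g(\{f(a_k, b_{\sigma(k)})\})$, and iterating until $\sigma$ becomes the identity yields the upper bound.

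The lower bound follows by the symmetric argument: for $i<j$ with $\sigma(i) < \sigma(j)$, taking $y_1 = b_{\sigma(i)}$, $y_2 = b_{\sigma(j)}$ and the same $x_1,x_2$ in~(\ref{eqn:order1}) matches the $\sigma$-configuration to the LHS and matches (as a multiset) the swapped configuration to the RHS, so creating that inversion decreases $g$; iterating until $\sigma$ equals the reverse permutation $k \mapsto n-k+1$ delivers the lower bound. The main obstacle is the matching step just described: hypothesis~(\ref{eqn:order1}) literally exchanges the $x$-coordinates at positions $i$ and $j$, while the bubble-sort swap on $\sigma$ exchanges the $y$-coordinates. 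The resulting configurations have the same multiset of $g$-inputs but differ by a positional swap at $\{i,j\}$, and reconciling them is exactly where property $S$ enters (implicit in the $\{\cdot\}$ notation of~(\ref{eqn:newRI})).
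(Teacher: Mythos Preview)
Your argument is the same as the paper's: the paper phrases the induction via the permutahedron (weak Bruhat) order on $S_n$, with covers given by swapping an adjacent inversion, and then asserts that Eq.~(\ref{eqn:order1}) yields $g^{\sigma_1}\succeq_d g^{\sigma_2}$ along each cover; this is exactly your bubble-sort step. The only cosmetic difference is that the paper restricts to \emph{adjacent} inversions (enough to connect any $\sigma$ to the identity and to the reverse permutation), whereas you allow arbitrary inversions.

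Your explicit invocation of property~$S$ is well placed and, if anything, more careful than the paper's sketch. The paper defines $g^{\sigma}=g(f(x_1,y_{\sigma(1)}),\ldots)$ and simply asserts the step inequality, without addressing the point you isolate: Eq.~(\ref{eqn:order1}) swaps the $x$-arguments at positions $i,j$, while the bubble-sort move on $\sigma$ swaps the $y$-arguments, so the two configurations agree only up to transposing the entries in slots $i$ and $j$. The paper's set-builder notation $g(\{\cdot\})$ in Eq.~(\ref{eqn:newRI}) is most naturally read as your multiset interpretation (and the companion Theorem~\ref{thm:newRI2} makes property~$S$ explicit), so your reading is consistent with the paper's intent; your remark that without this symmetry one can write down counterexamples is correct (e.g.\ $n=2$, $f(x,y)=xy$ on $\mathbb{R}\times\mathbb{R}_{\ge 0}$, $g(u,v)=-u$ satisfies Eq.~(\ref{eqn:order1}) but violates the upper bound in Eq.~(\ref{eqn:newRI}) at $a=(-1,0)$, $b=(0,1)$).
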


\begin{proof}
The proof is similar to Ref. \cite{Vince1990} where we use the permutahedron ordering $P_n$ on $S_n$, with $\sigma_1 \succeq \sigma_2$ if $\sigma_1$ can be formed from $\sigma_2$ by exchanging the elements of an adjacent inversion and we consider the partial order on $S_n$ generated by the transitive closure of $P_n$.
 Let $x_1\preceq_a x_2 \preceq_a \cdots \preceq_a x_n$ and $y_1\preceq_b y_2 \preceq_b \cdots \preceq_b y_n$ and define $g^{\sigma} = g(f(x_1,y_{\sigma(1)}),f(x_2,y_{\sigma(2)}), ...)$ for $\sigma \in S_n$.
If $\sigma_1\succeq \sigma_2$, then Eq. (\ref{eqn:order1}) implies that
$g^{\sigma_1}\succeq_d g^{\sigma_2} $. Since the greatest element and the least element in the partial order is the identity and the reverse permutation respectively, the conclusion follows.
\end{proof}

A slight variation of Theorem \ref{thm:newRI} is the following:
\begin{theorem} \label{thm:newRI2}
Let $I_a$, $I_b$, $I_c$, $I_d$, $f$ and $g$ be as defined in Theorem \ref{thm:newRI}..

If Eq. (\ref{eqn:order1}) is satisfied
 for all $x_1\preceq_a x_2$ in $I_a$ and $y_1\preceq_b y_2$ in $I_b$ and all pairs of indices $i\neq j$ and all $z$, and $g$ satisfies property $S$, then
\begin{equation}g(\{f(a_{\mu(i)}, b_{\mu(n-i+1)})|i = 1, ...,n\}) \preceq_d g(\{f(a_i, b_{\sigma(i)})|i = 1,...,n\}) \\ \preceq_d g(\{f(a_{\mu(i)}, b_{\mu(i)})|i = 1,...,n\}) \label{eqn:newRI2} \end{equation}
for all sequences $a_1\preceq_a a_2 \preceq_a \cdots \preceq_a a_n$ in $I_a$,   $b_1\preceq_b b_2 \preceq_b\cdots \preceq_b b_n$ in $I_b$, and all permutation $\sigma, \mu \in S_n$.
\end{theorem}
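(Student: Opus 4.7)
The plan is to deduce Theorem~\ref{thm:newRI2} from Theorem~\ref{thm:newRI} by leveraging the symmetry property $S$ of $g$. The core observation is that when $g$ is invariant under permutations of its $n$ arguments, the value $g(\{h_i\}_{i=1}^n)$ depends only on the multiset of inputs, so each of the $\mu$-twisted multisets appearing in Eq.~(\ref{eqn:newRI2}) can be rewritten as a $\sigma$-pairing of the original sorted sequences $(a_i)$ and $(b_i)$, at which point Theorem~\ref{thm:newRI} applies directly.

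First I would verify that the strengthening of the hypothesis from ``$i < j$'' to ``$i \neq j$'' is automatic in this symmetric setting, so that Theorem~\ref{thm:newRI} genuinely applies. For symmetric $g$, the operator $g_{ij}$ acts only on the multiset of its free arguments, so it equals $g_{ji}$ as a function; hence the conditions for the ordered pairs $(i,j)$ and $(j,i)$ coincide, and restricting to $i < j$ loses no information.

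Second I would carry out the reindexing $j = \mu(i)$ inside each $\mu$-twisted multiset. For the right-hand expression this yields the multiset equality $\{f(a_{\mu(i)}, b_{\mu(i)}) : i\} = \{f(a_j, b_j) : j\}$, which property $S$ promotes to $g(\{f(a_{\mu(i)}, b_{\mu(i)}) : i\}) = g(\{f(a_j, b_j) : j\})$, independently of $\mu$. For the left-hand expression, the same reindexing reveals the conjugated permutation $\pi := \mu \circ \mathrm{rev} \circ \mu^{-1}$, where $\mathrm{rev}(k) = n-k+1$, so that $g(\{f(a_{\mu(i)}, b_{\mu(n-i+1)}) : i\}) = g(\{f(a_j, b_{\pi(j)}) : j\})$, the $g$-value of the $\pi$-pairing of the sorted sequences.

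Finally I would invoke Theorem~\ref{thm:newRI} applied to the sorted sequences $(a_i)$ and $(b_i)$ with the permutations $\pi$ and $\sigma$. This sandwiches both $g(\{f(a_j, b_{\pi(j)})\})$ and $g(\{f(a_j, b_{\sigma(j)})\})$ between the reverse-pairing value $g(\{f(a_j, b_{n-j+1})\})$ and the identity-pairing value $g(\{f(a_j, b_j)\})$; transitivity of $\preceq_d$ then chains these inequalities into Eq.~(\ref{eqn:newRI2}). The main obstacle is the index bookkeeping: one must verify carefully that $\pi = \mu \circ \mathrm{rev} \circ \mu^{-1}$ is a permutation of $\{1, \ldots, n\}$ (immediate from composing bijections) and that the multiset identifications produced by the reindexing are accurate. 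Once those are settled, the remainder of the proof is just transitivity plus one application of Theorem~\ref{thm:newRI}.
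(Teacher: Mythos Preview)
Your reduction to Theorem~\ref{thm:newRI} is set up correctly: by property~$S$ the right-hand term of Eq.~(\ref{eqn:newRI2}) equals the identity-pairing value $R := g(\{f(a_j,b_j)\}_j)$, and the left-hand term equals the $\pi$-pairing value $P := g(\{f(a_j,b_{\pi(j)})\}_j)$ with $\pi = \mu\circ\mathrm{rev}\circ\mu^{-1}$. The gap is in your final step. Theorem~\ref{thm:newRI} gives you
\[
L \preceq_d P \preceq_d R \qquad\text{and}\qquad L \preceq_d S \preceq_d R,
\]
where $L$ is the reverse-pairing value and $S := g(\{f(a_j,b_{\sigma(j)})\}_j)$. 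From these two chains transitivity does \emph{not} yield $P \preceq_d S$: both $P$ and $S$ lie between $L$ and $R$, but nothing forces them to be comparable in the claimed direction. Your argument does establish the right inequality $S \preceq_d R$ (and hence the right half of Eq.~(\ref{eqn:newRI2})), but not the left one.

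This gap cannot be repaired, because the left inequality of Eq.~(\ref{eqn:newRI2}) is false as written. Take $n=3$, $a_i=b_i=i$, $f(x,y)=xy$, $g=\sum$ (so property~$S$ and Eq.~(\ref{eqn:order1}) hold for all $i\neq j$), $\mu=(1\,2)$, and $\sigma=\mathrm{rev}=(1\,3)$. Then $\pi=\mu\,\mathrm{rev}\,\mu^{-1}=(2\,3)$, the left-hand side is $a_1b_1+a_2b_3+a_3b_2=13$, while the middle term is $a_1b_3+a_2b_2+a_3b_1=10$, and $13\preceq_d 10$ fails. The paper's own sketch (rerunning the permutahedron argument with $g^{\sigma}=g(f(x_{\mu(1)},y_{\mu(\sigma(1))}),\dots)$) has the same defect: when one removes an adjacent inversion of $\sigma$ at positions $i,i+1$, the relevant indices become $\mu(i),\mu(i+1)$ and $\mu(\sigma(i)),\mu(\sigma(i+1))$, whose orderings are uncontrolled, so Eq.~(\ref{eqn:order1}) may point the wrong way along that edge of the permutahedron.
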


The proof of Theorem \ref{thm:newRI2} is similar to Theorem \ref{thm:newRI} except that we define $g^{\sigma}$ as $$g^{\sigma} = g(f(x_{\mu(1)},y_{\mu(\sigma(1))}),f(x_{\mu(2)},y_{\mu(\sigma(2))}), ...).$$

\begin{lemma} \label{lem:prodsum}
Let $x_1$, $x_2$, $y_1$ and $y_2$ be real numbers.
If $x_1 \leq x_2$ and $y_1\leq y_2$, then
$$ x_1y_1 +x_2y_2 \geq x_1y_2+x_2y_1 $$
and
$$ (x_1+y_1)(x_2+y_2) \leq (x_1+y_2)(x_2+y_1) $$
\end{lemma}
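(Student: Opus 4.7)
The plan is to reduce both inequalities to the single elementary identity $(x_2-x_1)(y_2-y_1)\geq 0$, which holds because the hypotheses $x_1\leq x_2$ and $y_1\leq y_2$ make both factors nonnegative. This is the standard two-term version of the rearrangement inequality and its dual, so the argument is just algebraic rearrangement; no deeper structure is needed.

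First I would handle the sum-of-products inequality. I would form the difference
\[
(x_1 y_1 + x_2 y_2) - (x_1 y_2 + x_2 y_1)
\]
and factor it as $(x_2 - x_1)(y_2 - y_1)$, which is nonnegative by hypothesis, giving the first claim.

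Next I would handle the product-of-sums inequality. I would expand both sides:
\[
(x_1+y_2)(x_2+y_1) - (x_1+y_1)(x_2+y_2)
\]
and observe that the $x_1 x_2$ and $y_1 y_2$ terms cancel, leaving exactly
\[
x_1 y_1 + x_2 y_2 - x_1 y_2 - x_2 y_1 = (x_2-x_1)(y_2-y_1) \geq 0,
\]
which gives the second claim.

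There is essentially no obstacle here; the only thing to be careful about is that the product-of-sums inequality, unlike the statement in Eq.~(\ref{eqn:rearrange2}) for longer sequences, does not require nonnegativity of the $x_i$ and $y_i$, since no logarithm or division is used in this two-term form and the factorization is valid for all real numbers. I would note this briefly to emphasize that both inequalities are really manifestations of the same sign condition on $(x_2-x_1)(y_2-y_1)$.
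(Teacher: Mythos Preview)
Your proof is correct and is essentially identical to the paper's: both reduce the two inequalities to the single observation that $(x_2-x_1)(y_2-y_1)\geq 0$ under the hypotheses.
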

\begin{proof}
The inequalities follow from the fact that they can both be rearranged into
$(x_2-x_1)(y_2-y_1) \geq 0$.
\end{proof}

Theorem \ref{thm:newRI} gives us a more direct way to unify Eq. (\ref{eqn:rearrange1}) and Eq. (\ref{eqn:rearrange2}). If we choose $g(x_1,x_2,...) = \sum_i x_i$ and $f(x,y) = xy$, then  Lemma \ref{lem:prodsum} implies that Eq. (\ref{eqn:order1}) is satisfied and we obtain Eq. (\ref{eqn:rearrange1}).
If we choose $g(x_1,x_2,...) = -\prod_i x_i$ and $f(x,y) = x+y$, then  Lemma \ref{lem:prodsum} with the additional assumption that $x_i, y_i \geq 0$ ensures that $z \geq 0$ and thus Eq. (\ref{eqn:order1}) is satisfied and we obtain Eq. (\ref{eqn:rearrange2}). Not having to use the log function to prove Eq. (\ref{eqn:rearrange2}) will be useful when we look at more general products such as the Hadamard product of matrices in Section \ref{sec:RIcalc}.

Other choices of $f$ and $g$ beyond addition and multiplication are for example $\max$ and $\min$ functions. Table \ref{tabl:fg} lists some of these choices for $f$ and $g$ that satisfies Eq. (\ref{eqn:order1}) where $\mathbb{R}_{\geq 0}$ denotes the set of nonnegative real numbers. The case of $g=\prod$ and $f=\min$ was also shown in Ref. \cite{jurkat:1967}.

\begin{table}[htbp]
\begin{center}
\begin{tabular}{|c|c|c|}
\hline
$f(x_1,x_2)$ & $g(x_1,\cdots, x_n)$ & Domain \\
\hline\hline
\multirow{3}{*}{$x_1\times x_2$} & $\sum_i x_i$ & $\mathbb{R}$ \\ \cline{2-3}
			& $\max_i x_i$ & $\mathbb{R}_{\geq 0}$ \\ \cline{2-3}
			& $-\min_i x_i$ & $\mathbb{R}_{\geq 0}$
 \\\hline
\multirow{3}{*}{$x_1+x_2$} & $-\Pi_i x_i$ & $\mathbb{R}_{\geq 0}$ \\ \cline{2-3}
			& $\max_i x_i$ & $\mathbb{R}$ \\ \cline{2-3}
			& $-\min_i x_i$ & $\mathbb{R}$ \\ 
 \hline
\multirow{3}{*}{$\max(x_1,x_2)$} & $-\sum_i x_i$ & $\mathbb{R}$ \\ \cline{2-3}
                       & $-\Pi_i x_i$ & $\mathbb{R}_{\geq 0}$ \\ \cline{2-3}
                       & $-\min_i x_i$ & $\mathbb{R}$ 
\\ \hline
\multirow{3}{*}{$\min(x_1,x_2)$} & $\sum_i x_i$ & $\mathbb{R}$ \\ \cline{2-3}
			& $\Pi_i x_i$ & $\mathbb{R}_{\geq 0}$ \\  \cline{2-3}
			 & $\max_i x_i$ & $\mathbb{R}$ \\ \hline
\end{tabular}
\end{center}
\caption{Examples of functions $f$ and $g$ such that Eq. (\ref{eqn:order1}) is satisfied. All the functions $g$ satisfy property $S$.}
\label{tabl:fg}
\end{table}

We will look at more general examples in Section \ref{sec:RIcalc}.

\subsection{Circular rearrangement inequality} \label{sec:circular}
In Ref. \cite{Yu2018} the following variant of the rearrangement inequality is studied for a sequence of numbers $a_1\leq a_2\leq \cdots \leq a_n$. Consider the value
$V(\sigma) = a_{\sigma(1)}a_{\sigma(2)} + a_{\sigma(2)}a_{\sigma(3)}  + \cdots + a_{\sigma(n)}a_{\sigma(1)} $, where $\sigma$ is a permutation of $\{1,2,\cdots n\}$.
Let $\sigma_{m_1}$ denote the permutation $(1,n-1,3,n-3,5,\cdots, n-6,6,n-4,4,n-2,2,n)$ and
$\sigma_{m_2}$ denote the permutation $(1,3,5,\cdots, n, \cdots, 6,4,2)$.
It was shown in Ref. \cite{Yu2018} that $V(\sigma)$ is minimized and maximized when the permutation $\sigma$ is equal to $\sigma_{m_1}$ and $\sigma_{m_2}$ respectively.

As the proof of this result only relies on properties of addition and multiplication described in Lemma \ref{lem:prodsum}, the following extension follows readily:
\begin{theorem} \label{thm:circular}
If $f$, $g$ satisfies Eq. (\ref{eqn:order1}), $f$ is symmetric, $g$ satisfies property $S$ and $a_1\leq a_2\leq \cdots \leq a_n$, then the value of 
$$ g\left(f\left(a_{\sigma(1)},a_{\sigma(2)}\right), f\left(a_{\sigma(2)},a_{\sigma(3)}\right), \cdots,  f\left(a_{\sigma(n)},a_{\sigma(1)}\right)\right)$$ 
is minimized and maximized when the permutation $\sigma$ is equal to $\sigma_{m_1}$ and $\sigma_{m_2}$ respectively.
\end{theorem}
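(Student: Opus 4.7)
The plan is to transcribe the argument of Ref.~\cite{Yu2018} into the abstract setting, replacing each use of the concrete sign identity $(x_2-x_1)(y_2-y_1)\geq 0$ that underlies Lemma~\ref{lem:prodsum} by a direct appeal to Eq.~(\ref{eqn:order1}).

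The first step is to prove the fundamental adjacent-transposition lemma. Fix a permutation $\sigma$ and let $\sigma'$ be obtained from $\sigma$ by exchanging the values at two cyclically adjacent positions $i$ and $i+1$ (all position indices taken modulo $n$). Among the $n$ cyclic edges, only three are affected, and the middle one $f(a_{\sigma(i)}, a_{\sigma(i+1)})$ is invariant under the swap because $f$ is symmetric. The remaining two edges change from $f(a_{\sigma(i-1)}, a_{\sigma(i)})$, $f(a_{\sigma(i+1)}, a_{\sigma(i+2)})$ to $f(a_{\sigma(i-1)}, a_{\sigma(i+1)})$, $f(a_{\sigma(i)}, a_{\sigma(i+2)})$. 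Relabeling $\{x_1\preceq_a x_2\}=\{a_{\sigma(i-1)}, a_{\sigma(i+2)}\}$ and $\{y_1\preceq_b y_2\}=\{a_{\sigma(i)}, a_{\sigma(i+1)}\}$, one configuration equals $\{f(x_1,y_1), f(x_2,y_2)\}$ and the other equals $\{f(x_1,y_2), f(x_2,y_1)\}$. Property $S$ of $g$ lets me place the two affected arguments in any preassigned pair of coordinates without changing $g$, so Eq.~(\ref{eqn:order1}) applies and determines the sign of the change in $g$ in terms of whether the outer pair $\{a_{\sigma(i-1)}, a_{\sigma(i+2)}\}$ and the inner pair $\{a_{\sigma(i)}, a_{\sigma(i+1)}\}$ are similarly or oppositely ordered.

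Next, I would invoke the combinatorial core of Ref.~\cite{Yu2018}: under the partial order on $S_n$ generated by these value-monotone adjacent transpositions, $\sigma_{m_2}$ is the unique maximum and $\sigma_{m_1}$ the unique minimum, and every permutation is connected to them by chains of such moves. This is a purely permutation-theoretic statement, independent of the choice of $f$ and $g$, so it transfers from Ref.~\cite{Yu2018} verbatim. Chaining the local comparisons produced by the adjacent-transposition lemma along such a chain gives the desired two-sided bound, in the same spirit as the transitive-closure argument used in the proof of Theorem~\ref{thm:newRI}.

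The main obstacle, such as it is, is bookkeeping rather than substance: one must verify that every sign comparison in the argument of Ref.~\cite{Yu2018} genuinely reduces to an instance of Eq.~(\ref{eqn:order1}), rather than to some subtler algebraic identity that happens to hold only for addition and multiplication, and one must check that property $S$ is available at exactly the moments where the abstract swap lemma needs to move the two affected coordinates into the positions demanded by Eq.~(\ref{eqn:order1}). Both of these verifications are routine once the adjacent-transposition lemma above is in place, and the identification of $\sigma_{m_1}$ and $\sigma_{m_2}$ as the extremal permutations is then inherited directly from Ref.~\cite{Yu2018}.
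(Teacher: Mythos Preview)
Your proposal is correct and matches the paper's approach exactly: the paper itself gives no detailed proof but simply observes that the argument in Ref.~\cite{Yu2018} uses only the sign identities packaged in Lemma~\ref{lem:prodsum}, which in the abstract setting are precisely Eq.~(\ref{eqn:order1}) together with the symmetry of $f$ and property~$S$ of $g$. Your adjacent-transposition analysis and the appeal to the permutation-combinatorics of \cite{Yu2018} spell out explicitly what the paper leaves implicit, so if anything you have supplied more detail than the paper does.
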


A consequence is the dual to the result in Ref. \cite{Yu2018}.
\begin{corollary}
If $0\leq a_1\leq a_2\leq \cdots \leq a_n$, then the value of 
$$W(\sigma) = \left(a_{\sigma(1)}+a_{\sigma(2)}\right)\left(a_{\sigma(2)}+a_{\sigma(3)}\right)\times  \cdots \times \left(a_{\sigma(n)}+a_{\sigma(1)}\right)$$
is minimized and maximized when the permutation $\sigma$ is equal to $\sigma_{m_2}$ and $\sigma_{m_1}$ respectively.
\end{corollary}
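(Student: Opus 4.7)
The plan is to apply Theorem \ref{thm:circular} with the choices $f(x_1,x_2) = x_1+x_2$ and $g(x_1,\ldots,x_n) = -\prod_i x_i$, working on the nonnegative reals. Under these choices,
\[
g\bigl(f(a_{\sigma(1)},a_{\sigma(2)}),\ldots,f(a_{\sigma(n)},a_{\sigma(1)})\bigr) \;=\; -W(\sigma),
\]
so once the hypotheses of the theorem are verified the corollary will follow by simply negating the minimizing and maximizing permutations provided there.

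First I would check the three structural hypotheses. The function $f(x_1,x_2) = x_1+x_2$ is symmetric in its two arguments, and $g(x_1,\ldots,x_n) = -\prod_i x_i$ is invariant under any permutation of its inputs, hence satisfies property $S$. The remaining condition is Eq.~(\ref{eqn:order1}), which for this pair $(f,g)$ reduces to the inequality
\[
-(x_1+y_1)(x_2+y_2)\,P \;\geq\; -(x_2+y_1)(x_1+y_2)\,P
\]
for $x_1\leq x_2$ and $y_1\leq y_2$, where $P\geq 0$ denotes the product of the remaining entries of $z$. Because the $a_i$ are assumed nonnegative, every such $P$ is nonnegative, and after dividing by $P$ (the inequality is trivial when $P=0$) what remains is exactly the second assertion of Lemma \ref{lem:prodsum}. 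This is precisely the entry for $(f,g) = (x_1+x_2,\,-\prod_i x_i)$ on domain $\mathbb{R}_{\geq 0}$ recorded in Table \ref{tabl:fg}.

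With the hypotheses in place, Theorem \ref{thm:circular} asserts that $-W(\sigma)$ is minimized at $\sigma=\sigma_{m_1}$ and maximized at $\sigma=\sigma_{m_2}$. Flipping the sign reverses the two roles, so $W(\sigma)$ is maximized at $\sigma_{m_1}$ and minimized at $\sigma_{m_2}$, which is the claim. There is no real obstacle here: the content is conceptual rather than computational, namely recognizing that the circular rearrangement template covers products-of-sums once the entries are nonnegative, and the hypothesis $a_1\geq 0$ is exactly what licenses the use of $g=-\prod_i x_i$ in the role required by Theorem \ref{thm:circular}.
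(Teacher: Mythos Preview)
Your argument is correct and is precisely the approach the paper intends: the corollary is stated immediately after Theorem~\ref{thm:circular} as ``a consequence,'' and your choice $f(x_1,x_2)=x_1+x_2$, $g=-\prod_i x_i$ on $\mathbb{R}_{\geq 0}$ is exactly the pair from Table~\ref{tabl:fg} that instantiates the theorem to yield the dual product-of-sums inequality. The sign flip at the end, swapping the roles of $\sigma_{m_1}$ and $\sigma_{m_2}$, is handled correctly.
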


\subsection{Extension to multiple sequences}

Theorem \ref{thm:newRI}  can be generalized to multiple sequences as well.

\begin{theorem} \label{thm:newRImulti}
Let $f$ be a function of $k$ variables and let $g$ be function of $n$ variables.

If
\begin{equation}
g_{ij}(f_{ml}(x_1,y_1,w), f_{ml}(x_2,y_2,w),z) \succeq_d g_{ij}(f_{ml}(x_2,y_1,w), f_{ml}(x_1,y_2,w),z) \label{eqn:order2multi}
\end{equation}
 for all $x_1\preceq_i x_2$ and $y_1 \preceq_j y_2$ and all pairs of indices $i< j$, $m< l$ and all $z$, $w$, then
$$g(\{f(a_{1i}, a_{2\sigma_2(i)},\cdots, a_{k\sigma_k(i)}|i = 1,...,n\}) \preceq_d g(\{f(a_{1i},a_{2i},\cdots a_{ki})|i = 1,\cdots,n\})$$ 
for all permutations $\sigma_j\in S_n$ and for all sequences $a_{ij}$, $1\leq i\leq k$, $1\leq j\leq n$ where for all $i$, $a_{i1}\preceq_i a_{i2}\preceq_i \cdots \preceq_i a_{in}$.
\end{theorem}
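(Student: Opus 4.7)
The plan is to extend the permutahedron argument from the proof of Theorem~\ref{thm:newRI} to $(k-1)$-tuples of permutations. Define
\[ G(\sigma_2, \ldots, \sigma_k) := g\bigl(f(a_{1,i}, a_{2, \sigma_2(i)}, \ldots, a_{k, \sigma_k(i)}) : i = 1, \ldots, n\bigr) \]
on $S_n^{k-1}$, and place on this product the partial order generated by: $(\sigma_2, \ldots, \sigma_k) \succeq (\sigma_2', \ldots, \sigma_k')$ if they differ in exactly one coordinate $\ell$, and $\sigma_\ell$ arises from $\sigma_\ell'$ by an adjacent transposition that repairs an inversion. The unique top element is $(\mathrm{id}, \ldots, \mathrm{id})$, so it suffices to show $G$ is non-decreasing with respect to $\preceq_d$ along each elementary move.

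Consider such a move in coordinate $\ell \in \{2, \ldots, k\}$ that swaps positions $i$ and $i+1$. Only rows $i$ and $i+1$ of $g$'s arguments change, and only in their $\ell$-th $f$-entry: if $p = \sigma_\ell(i+1) < q = \sigma_\ell(i)$ before the move, then after it row $i$ carries $a_{\ell, p}$ and row $i+1$ carries $a_{\ell, q}$, placing the $\ell$-th coordinates in sorted order across the two rows. I would then invoke (\ref{eqn:order2multi}) with $g$-indices $(i, i+1)$ (in the roles of $i, j$), $f$-index $m = \ell$, values $x_1 = a_{\ell, p}$ and $x_2 = a_{\ell, q}$, and the auxiliary data $l, y_1, y_2, w$ recording the remaining $f$-coordinates of the two rows; this yields the required single-step monotonicity $G(\sigma') \preceq_d G(\sigma)$. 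Iterating these bubble-sort moves until no $\sigma_\ell$ has any inversion reaches the top $(\mathrm{id}, \ldots, \mathrm{id})$, and monotonicity of $G$ along the chain gives the theorem.

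The main obstacle I anticipate is the formal matching with (\ref{eqn:order2multi}): as written, the hypothesis uses a single shared tuple $w$ for the two $f$-evaluations, whereas in the multi-sequence application rows $i$ and $i+1$ generally differ in every non-$\ell$ coordinate of $f$. The natural reading that makes the theorem go through---and that specializes cleanly to Theorem~\ref{thm:newRI} at $k=2$, where $w$ is empty---is that the universal quantifier over $w$ is to be applied coordinate-by-coordinate and independently for each of the two $f$-evaluations. Pinning down this interpretive point (or equivalently, verifying that (\ref{eqn:order2multi}) as stated is strong enough when applied along a path that changes one non-$\ell$ coordinate at a time) is where I expect to spend the most care; the combinatorial skeleton of the argument is otherwise a direct lift from the proof of Theorem~\ref{thm:newRI}.
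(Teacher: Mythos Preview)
Your bubble-sort argument on the product $S_n^{k-1}$ is essentially the paper's approach; the paper's entire proof is the two-line remark that the result ``follows by induction on the number of arguments of $f$ and the fact that once all the sequences are similarly ordered, exchanging any pair of adjacent terms in one sequence will not increase the value of $g$ as a consequence of Eq.~(\ref{eqn:order2multi}),'' which is the same elementary-move idea, organized as an induction on $k$ rather than as a walk on the product permutahedron.

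The obstacle you flag is genuine and not merely interpretive. Condition~(\ref{eqn:order2multi}) as written forces the two $f$-evaluations in a single step to share one background tuple $w$, whereas an adjacent-inversion repair in $\sigma_\ell$ at rows $i,i+1$ compares two rows that generally differ in every $f$-coordinate, so no single $w$ fits both. The paper's proof does not address this: even its claimed ``fact'' about a single swap away from the fully sorted configuration already involves two rows whose non-$(m,l)$ coordinates disagree. So either the intended hypothesis allows two independent background tuples (one per row), which all the concrete examples in the paper satisfy and which makes your argument go through verbatim, or a more delicate chained use of~(\ref{eqn:order2multi}) is needed that neither you nor the paper supplies. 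Your combinatorial skeleton matches the paper's; the gap in matching the hypothesis to a single elementary move is shared with the source.
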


\begin{proof}
This follows by induction on the number of arguments of $f$ and the fact that once all the sequences are similarly ordered, exchanging any pair of adjacent terms in one sequence will not increase the value of $g$ as a consequence of Eq. (\ref{eqn:order2multi}).
\end{proof}

The corresponding extension of Theorem \ref{thm:newRI2}  to multiple sequences is
\begin{theorem} \label{thm:newRImulti2}
Let $f$ be a function of $k$ variables and let $g$ satisfies property $S$.
If Eq. (\ref{eqn:order2multi}) is satisfied
 for all $x_1\preceq_i x_2$ and $y_1\preceq_j y_2$ and all pairs of indices $i\neq j$, $m< l$ and all $z$, $w$, then
\begin{equation*}g(\{f(a_{1\sigma_1(i)}, a_{2\sigma_2(i)},\cdots, a_{k\sigma_k(i)}|i = 1,...,n\}) \\ \preceq_d g(\{f(a_{1\mu(i)},a_{2\mu(i)},\cdots, a_{k\mu(i)})|i = 1,\cdots,n\})
\end{equation*}
for all permutations  $\mu$, $\sigma_j\in S_n$ and for all sequences $a_{ij}$, $1\leq i\leq k$, $1\leq j\leq n$ where for all $i$, $a_{i1}\preceq_i a_{i2}\preceq_i \cdots \preceq_i a_{in}$.
\end{theorem}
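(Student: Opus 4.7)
The plan is to reduce Theorem~\ref{thm:newRImulti2} to Theorem~\ref{thm:newRImulti} by exploiting property~$S$ of $g$, which allows one to permute the indexing of the $n$-tuple fed into $g$ without changing its value. The key idea is that property~$S$ can absorb the global permutations $\mu$ (on the right-hand side) and $\sigma_1$ (on the left-hand side), collapsing the general statement into the form already handled by Theorem~\ref{thm:newRImulti}.

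First I would observe that the right-hand side is independent of $\mu$. Reindexing via $i \mapsto \mu^{-1}(i)$ turns the collection $\{f(a_{1\mu(i)}, \ldots, a_{k\mu(i)}) \mid i = 1, \ldots, n\}$ into $\{f(a_{1j}, \ldots, a_{kj}) \mid j = 1, \ldots, n\}$, and because $g$ satisfies property~$S$, the two evaluations coincide. Hence the right-hand side equals $g(\{f(a_{1i}, \ldots, a_{ki}) \mid i\})$.

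Next, on the left-hand side, applying property~$S$ with the reindexing $i \mapsto \sigma_1^{-1}(i)$ gives
$$g(\{f(a_{1\sigma_1(i)}, a_{2\sigma_2(i)}, \ldots, a_{k\sigma_k(i)}) \mid i\}) = g(\{f(a_{1j}, a_{2\tau_2(j)}, \ldots, a_{k\tau_k(j)}) \mid j\}),$$
where $\tau_\ell = \sigma_\ell \circ \sigma_1^{-1}$ for $\ell \geq 2$. So it suffices to establish
$$g(\{f(a_{1j}, a_{2\tau_2(j)}, \ldots, a_{k\tau_k(j)}) \mid j\}) \preceq_d g(\{f(a_{1i}, a_{2i}, \ldots, a_{ki}) \mid i\}),$$
which is precisely the conclusion of Theorem~\ref{thm:newRImulti} with first-sequence permutation equal to the identity and $\ell$th permutation equal to $\tau_\ell$. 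Since Theorem~\ref{thm:newRImulti2} assumes Eq.~(\ref{eqn:order2multi}) for all $i \neq j$ — a strictly stronger hypothesis than the $i < j$ version required by Theorem~\ref{thm:newRImulti} — the earlier theorem applies and delivers the needed inequality.

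Because the argument reduces so cleanly to the previous theorem once property~$S$ has been invoked, there is no substantive obstacle; the only care required is the bookkeeping of the substitutions $i \mapsto \sigma_1^{-1}(i)$ and $i \mapsto \mu^{-1}(i)$, ensuring the resulting multisets of $f$-values agree before property~$S$ is applied. Alternatively, one could mirror the induction sketched in the proof of Theorem~\ref{thm:newRImulti}, using adjacent transpositions with Theorem~\ref{thm:newRI2} as the base case and absorbing the $\mu$ and $\sigma_1$ factors via property~$S$ at the outset; but the reduction route above is the most direct.
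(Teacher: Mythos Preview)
Your reduction is correct. The paper does not supply a separate proof for this theorem; it is left implicit, with the intended argument being the same modification that takes Theorem~\ref{thm:newRI} to Theorem~\ref{thm:newRI2}: redefine $g^{\sigma}$ with the permutation $\mu$ built into the indexing and rerun the permutahedron/induction argument. Your route---invoking property~$S$ once to strip $\mu$ from the right-hand side and once to strip $\sigma_1$ from the left, then applying Theorem~\ref{thm:newRImulti} directly---arrives at the same place with less repetition, and the observation that the $i\neq j$ hypothesis dominates the $i<j$ hypothesis of Theorem~\ref{thm:newRImulti} is exactly what is needed to close the loop. The alternative you sketch at the end is the paper's template; both are valid, and yours is the more economical.
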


Similarly, if the functions $f$ in Table \ref{tabl:fg} are extended as functions of $k$ variables and the domain is restricted to $\mathbb{R}_{\geq 0}$ they would satisfy Eq. (\ref{eqn:order2multi}).

\section{Another variation of the rearrangement inequality}
In Theorem \ref{thm:newRI}, the sequences $a_i$ and $b_i$ are separate and the permutation $\sigma$ acts on $b_i$ only. We next introduce a variant of the rearrangement inequality where the permutation acts on the union of $a_i$ and $b_i$.

\begin{theorem} \label{thm:variation}
Let $I$ be a set with partial order $\preceq$ and let $f:I\times I\rightarrow I_c$ be a function of 2 variables. Let $g:I_c^n\rightarrow I_d$ be a function of $n$ variables. Let $\preceq_c$ and $\preceq_d$ be partial orders for sets $I_c$ and $I_d$ respectively. 
Let $a_i$ be a set of $2n$ elements in $I$ such that $a_1\preceq a_2\preceq \cdots \preceq a_{2n}$ and  let $b_i$ be any permutation of the elements of $a_i$.
If $x\preceq_c y\Rightarrow g_i(x)\preceq_d g_i(y)$ for all $i$ and
\begin{equation}\label{eqn:asymmetry} 
f(x_1,x_2) \preceq_c f(x_2,x_1),
\end{equation}
and
\begin{equation}
g_{ij}(f(x_1,y_1), f(x_2,y_2),z) \succeq_d g_{ij}(f(x_2,y_1), f(x_1,y_2),z) \label{eqn:order1variation}
\end{equation}
 for all $x_1\preceq x_2$ and $y_1\preceq y_2$ in $I$ and all pairs of indices $i < j$ and all $z$, then 
\begin{equation}\label{eqn:rearrange-general1}
g(\{f(a_{i},a_{2n-i+1})|i = 1,\cdots n\}) \preceq_d g(\{f(b_{2i-1},b_{2i})|i = 1,\cdots n\})
\end{equation}

If $f$ is symmetric, i.e.
\begin{equation}\label{eqn:symmetry} 
f(x, y) = f(y,x)
\end{equation}
for all $x$, $y$ in $I$, and Eq. (\ref{eqn:order1variation}) is satisfied  for all $x_1\preceq x_2$ and $y_1\preceq y_2$ in $I$ and all pairs of indices $i < j$ and all $z$, then 
\begin{equation}\label{eqn:rearrange-general2}  
g(\{f(b_{2i-1},b_{2i})|i = 1,\cdots n\}) \preceq_d g(\{f(a_{2i-1},a_{2i})|i = 1,\cdots n\})
\end{equation}

\end{theorem}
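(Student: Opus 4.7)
The plan is to establish both inequalities by an exchange argument: starting from an arbitrary permutation $b$, I perform a sequence of local pair-swaps on the $b_i$'s, each moving $g$ in the required direction, until the target configuration is reached. The basic swap tool is Eq.~(\ref{eqn:order1variation}) applied at the two $g$-positions $I,J$ occupied by the pairs being exchanged, with the four exchanged elements playing the roles of $x_1,x_2,y_1,y_2$; symmetry or asymmetry of $f$, together with the coordinatewise monotonicity of $g$, handles any within-pair reordering.

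For (\ref{eqn:rearrange-general1}), I first use (\ref{eqn:asymmetry}) and the monotonicity of $g$ to assume each pair $(b_{2i-1},b_{2i})$ is listed smaller-first, which only decreases $g$. Then I induct on $n$. If $a_1$ and $a_{2n}$ are already paired, strip that pair off and apply the inductive hypothesis to the remaining $2n-2$ sorted elements. Otherwise $a_1$ lies in a pair with some $a_p$ at position $I$ and $a_{2n}$ in a pair with some $a_q$ at position $J$. I apply Eq.~(\ref{eqn:order1variation}) at $(I,J)$ with $x_1=a_1$, $x_2=a_q$, $y_1=a_p$, $y_2=a_{2n}$ --- the hypotheses $x_1\preceq x_2$ and $y_1\preceq y_2$ are automatic since $a_1$ is smallest and $a_{2n}$ largest --- and conclude that the original $g$-value dominates the $g$-value of the re-paired configuration with $(a_q,a_p)$ at $I$ and $(a_1,a_{2n})$ at $J$. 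One further application of (\ref{eqn:asymmetry}) restores $(a_q,a_p)$ to smaller-first order without raising $g$, and the inductive step concludes.

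For (\ref{eqn:rearrange-general2}), the symmetry of $f$ makes within-pair ordering irrelevant, so only unordered matchings need to be compared. Again I induct on $n$. If $a_{2n-1}$ and $a_{2n}$ are already paired, recurse on the remaining $2n-2$ elements. Otherwise $a_{2n}$ is paired with some $a_k$ at position $I$ and $a_{2n-1}$ with some $a_m$ at position $J$, with $k,m\leq 2n-2$. I apply Eq.~(\ref{eqn:order1variation}) at $(I,J)$ with $x_1=a_m$, $x_2=a_{2n}$, $y_1=a_k$, $y_2=a_{2n-1}$; both $a_m\preceq a_{2n}$ and $a_k\preceq a_{2n-1}$ hold automatically. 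Using symmetry of $f$ to identify $f(a_m,a_k)=f(a_k,a_m)$ and $f(a_{2n},a_{2n-1})=f(a_{2n-1},a_{2n})$, the left-hand side of the resulting inequality is the $g$-value of the re-paired configuration $\{(a_k,a_m),(a_{2n-1},a_{2n})\}$ at positions $I,J$, while the right-hand side is the original $g$-value at the same positions. Hence the re-paired configuration dominates the original, and the induction finishes. Notice the same role assignment handles both $k<m$ and $k>m$, since the four exchanged elements appear symmetrically in the hypothesis of Eq.~(\ref{eqn:order1variation}).

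The principal subtlety to verify is that the role assignments above place the new configuration on the left of Eq.~(\ref{eqn:order1variation}) and the original configuration on the right \emph{at the same $g$-positions}, with no accidental position swap that would otherwise demand property $S$ on $g$; the particular choices of $x$'s and $y$'s are made precisely to avoid this. A subsidiary point is that the inductive hypothesis must accommodate the distinguished pair $(a_1,a_{2n})$ or $(a_{2n-1},a_{2n})$ landing at an arbitrary index in the argument list of $g$, which is harmless because Eq.~(\ref{eqn:order1variation}) is assumed for every pair of indices $i<j$.
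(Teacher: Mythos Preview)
There is a genuine gap in how your induction tracks the slots of $g$. Condition~(\ref{eqn:order1variation}) is only available for $i<j$ and compares two configurations occupying the \emph{same} pair of slots. In your step for~(\ref{eqn:rearrange-general1}), when the pair containing $a_1$ sits at slot $I$ and the pair containing $a_{2n}$ at slot $J$ with $J<I$, no admissible assignment of $(x_1,x_2,y_1,y_2)$ puts the current configuration on the large side of~(\ref{eqn:order1variation}): with $i=J$ one is forced into $x_1=a_q$, $y_1=a_{2n}$, and neither $a_q\preceq a_1$ nor $a_{2n}\preceq a_p$ holds; condition~(\ref{eqn:asymmetry}) does not help, since flipping a pair to larger-first can only raise $g$. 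Even when $I<J$ and the swap succeeds, the pair $(a_1,a_{2n})$ lands at slot $J$ rather than slot $1$, so the recursion terminates at an opposite-matching arrangement whose slot pattern depends on $b$, not at the specific target $g\bigl(f(a_1,a_{2n}),f(a_2,a_{2n-1}),\ldots\bigr)$. Without property~$S$ on $g$ this is fatal rather than cosmetic: take $n=2$, $f(x,y)=xy$ on $\mathbb{R}_{\ge 0}$, $g(u,v)=u+2v$; then $(x_2-x_1)(2y_2-y_1)\ge 0$ so~(\ref{eqn:order1variation}) holds, $g$ is coordinatewise monotone, and $f$ is symmetric so~(\ref{eqn:asymmetry}) holds with equality, yet for $a=(1,2,3,4)$ and $b=(2,3,1,4)$ one gets $g\bigl(f(b_1,b_2),f(b_3,b_4)\bigr)=6+8=14<16=g\bigl(f(a_1,a_4),f(a_2,a_3)\bigr)$, so the chain you build cannot possibly end at the stated left-hand side of~(\ref{eqn:rearrange-general1}). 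The same slot-drift issue affects your argument for~(\ref{eqn:rearrange-general2}), although there the symmetry of $f$ at least lets the swap itself go through for either ordering of $I,J$.

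The paper's proof is organized differently. It does not induct on $n$; instead it fixes the parametrization $v=g\bigl(f(c_1,c_{2n}),\ldots,f(c_n,c_{n+1})\bigr)$ over permutations $c$ of the $a_i$, selects a $c$ realizing a minimal (resp.\ maximal) element of $v$, and then alternates between invoking Theorem~\ref{thm:newRI} to sort each of the halves $c_1,\ldots,c_n$ and $c_{n+1},\ldots,c_{2n}$ and using~(\ref{eqn:asymmetry}) (resp.~(\ref{eqn:symmetry})) to swap across the interface $c_n,c_{n+1}$, iterating until $c$ is globally sorted. Because slot $i$ of $g$ carries $f(c_i,c_{2n-i+1})$ throughout this process, the endpoint is by construction the arrangement named in the theorem rather than a slot-permuted variant of it, which is exactly the bookkeeping your inductive scheme loses.
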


\begin{proof}
Let $c_i$ be a permutation of $b_i$ such that $v = g(\{f(c_{i},c_{2n-i+1}|i = 1,\cdots, n\})$ is a minimal element under $\preceq_d$. Then 
by Theorem \ref{thm:newRI}, $c_i$ can be chosen such that $c_i \preceq c_{i+1}$ for $1\leq i\leq n-1$ and for $n+1\leq i \leq 2n-1$.
Suppose $c_{n+1} \prec c_{n}$. By Eq. (\ref{eqn:asymmetry}) we can swap these two terms without causing $v$ to be nonminimal.
Again by Theorem \ref{thm:newRI}, we can reorder $c_i$ for $1\leq i\leq n$ such that they are nondecreasing under $\preceq$ and also reorder $c_i$ for $n+1\leq i\leq 2n$ such that they are nondecreasing. 
If $c_{n+1} \prec c_{n}$ we repeat the process again. It's clear that this needs to be repeated at most a finite number of times and eventually we have $c_{n+1} \succeq c_{n}$.
Thus we have a sequence of $c_i$ such that $c_i \preceq c_{i+1}$ for $1\leq i\leq n-1$ and for $n+1\leq i \leq 2n-1$, in addition to $c_{n} \preceq c_{n+1}$, i.e., $c_1\preceq c_2 \cdots \preceq c_{2n}$. Since each swap of 2 elements in the permutation results in comparable elements in $I_d$, this minimal element $v$ is also the least element $v$ under $\preceq_d$ among all the permutations of $b_i$.

Next, let $d_i$ be a permutation of $b_i$ such that $v = g(\{ f(d_{2i-1},d_{2i}\}|n = 1,\cdots, n\})$ is a maximal element under $\preceq_d$. Then 
by Theorem \ref{thm:newRI}, $d_i$ can be chosen such that $d_{2i-1} \preceq d_{2i+1}$ and $d_{2i} \preceq d_{2i+2}$ for $1\leq i\leq n-1$. 
Furthermore, by repeated use of  Theorem \ref{thm:newRI} and Eq. (\ref{eqn:symmetry}) we can assume $d_{2i-1} \preceq d_{2i}$ as well.
Suppose $d_{2n-1} \prec d_{2(n-1)}$. Then $d_{2(n-1)-1} \prec d_{2(n-1)}$ and by Eq. (\ref{eqn:symmetry}) we can swap $d_{2(n-1)}$ and $d_{2(n-1)-1}$ without changing the value of $v$.
Again by repeated application of Theorem \ref{thm:newRI} and Eq. (\ref{eqn:symmetry}) we can reorder $d_{2i}$ for $1\leq i\leq n$ such that they are nondecreasing under $\preceq$ and also reorder $d_{2i-1}$ for $1\leq i\leq n$ such that they are nondecreasing in addition to ensuring $d_{2i-1} \preceq d_{2i}$ without changing $v$. It is easy to see that after this reordering $d_{2n-1} \succeq d_{2(n-1)}$. Applying this procedure for $j = n-1, ..., 3, 2$ sequentially shows that for each $2\leq j\leq n$, $d_{2j-1} \succeq d_{2(j-1)}$.  This in addition with the fact that $d_{2i}\succeq d_{2i-1}$ shows that $d_1\preceq d_2 \cdots \preceq d_{2n}$. Similarly, this maximal element $v$ is also the greatest element $v$ among all the permutations of $b_i$.
\end{proof}

By choosing $g(x_1,x_2,\cdots) = \sum_i x_i $ and $f(x,y) = xy$ or  $g(x_1, x_2, \cdots) = - \prod_i x_i$ and $f(x,y) = x+y$, we have the following result.

\begin{corollary} \label{cor:variation-sumprod}
Let $a_i$ be a set of $2n$ numbers and let $b_i$ be the numbers $a_i$ sorted such that $b_1 \leq b_2\leq \cdots \leq b_{2n}$. 
Then 
$$\sum_{i=1}^n b_{i}b_{2n-i+1} \leq   \sum_{i=1}^n a_{2i-1}a_{2i} \leq \sum_{i=1}^n b_{2i-1}b_{2i}.$$
If in addition $a_i\geq 0$, then
$$\prod_{i=1}^n \left(b_{2i-1}+b_{2i}\right) \leq   \prod_{i=1}^n \left(a_{2i-1}+a_{2i}\right) \leq \prod_{i=1}^n \left(b_{i}+b_{2n-i+1}\right).$$
\end{corollary}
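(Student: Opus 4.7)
The plan is to derive both inequalities as direct instances of Theorem \ref{thm:variation}, with the verification of its hypotheses handled entirely by Lemma \ref{lem:prodsum}. The only bookkeeping step is that the corollary's labelling is opposite to the theorem's: the theorem's sorted sequence ``$a_i$'' corresponds to the corollary's $b_i$, and the theorem's arbitrary permutation ``$b_i$'' corresponds to the corollary's $a_i$. Under that identification, Eq. (\ref{eqn:rearrange-general1}) supplies the lower bound and, whenever $f$ is symmetric, Eq. (\ref{eqn:rearrange-general2}) supplies the upper bound.

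For the first inequality I would take $I = I_c = I_d = \mathbb{R}$ with the usual order, $f(x,y) = xy$, and $g(x_1,\ldots,x_n) = \sum_i x_i$. Then $g$ is coordinate-wise nondecreasing (so $x \preceq_c y \Rightarrow g_i(x) \preceq_d g_i(y)$), $f$ is symmetric (so Eq. (\ref{eqn:symmetry}) holds automatically), and the displayed hypothesis Eq. (\ref{eqn:order1variation}) reduces, after the unchanged entries of $z$ cancel, to
\[
x_1 y_1 + x_2 y_2 \;\geq\; x_2 y_1 + x_1 y_2 \quad \text{whenever } x_1 \leq x_2,\; y_1 \leq y_2,
\]
which is precisely the first half of Lemma \ref{lem:prodsum}. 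Since $f$ is symmetric, both conclusions of Theorem \ref{thm:variation} apply, and together they deliver $\sum b_i b_{2n-i+1} \leq \sum a_{2i-1} a_{2i} \leq \sum b_{2i-1} b_{2i}$.

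For the second inequality I would take $I = \mathbb{R}_{\geq 0}$, $f(x,y) = x+y$, and $g(x_1,\ldots,x_n) = -\prod_i x_i$. The nonnegativity hypothesis is used in exactly two places: it makes $g$ coordinate-wise nondecreasing (because the remaining factors of the product are nonnegative), and it makes the ``$z$'' factor in Eq. (\ref{eqn:order1variation}) nonnegative, so that the condition reduces to $(x_1+y_1)(x_2+y_2) \leq (x_1+y_2)(x_2+y_1)$, which is the second half of Lemma \ref{lem:prodsum}. Again $f$ is symmetric, so both parts of Theorem \ref{thm:variation} apply; because $g$ carries a sign flip, the order $\preceq_d$ on $g$ corresponds to $\geq$ on the actual products, giving $\prod(b_{2i-1}+b_{2i}) \leq \prod(a_{2i-1}+a_{2i}) \leq \prod(b_i + b_{2n-i+1})$.

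There is no real obstacle here; everything is a matter of choosing $f$ and $g$ correctly and citing Lemma \ref{lem:prodsum}. The only point that requires a moment of care is tracking the two roles played by nonnegativity in the product-of-sums case (monotonicity of $g$ and preservation of the inequality direction after multiplying by the $z$-factor), and the direction-flip arising from the minus sign in $g = -\prod$, which is why the nonnegativity hypothesis appears only for the second inequality.
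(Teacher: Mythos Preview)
Your proposal is correct and matches the paper's approach exactly: the paper derives the corollary from Theorem~\ref{thm:variation} by the same two choices $g=\sum$, $f(x,y)=xy$ and $g=-\prod$, $f(x,y)=x+y$, with Lemma~\ref{lem:prodsum} supplying Eq.~(\ref{eqn:order1variation}) in each case. Your write-up simply spells out the hypothesis checks (monotonicity of $g_i$, the role of nonnegativity, and the direction flip from the minus sign) that the paper leaves implicit.
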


It is interesting to note that when $\{a_i\} = \{x_1, x_1, x_2, x_2, \cdots, x_n, x_n\}$ consists of $n$ numbers each occuring twice, then the optimal permutations in Corollary \ref{cor:variation-sumprod} correspond to the optimal permutations in Eqns. (\ref{eqn:rearrange1}-\ref{eqn:rearrange2}).

Similarly, we can generalize Theorem \ref{thm:newRImulti} to multiple sequences when the permutation is among all $kn$ numbers $\{a_{ij}\}$.

\begin{theorem}\label{thm:multiple-seq-variant}
Consider a sequence of $kn$ elements $a_{i}$ in $I$ with partial order $\preceq$ such that
$a_1\preceq a_2 \preceq \cdots \preceq a_{kn}$. Let
$\{b_{i}\}$ be and arbitrary permutation of $\{a_i\}$.  
Let $f(x_1,\cdots, x_k)$ be a function defined on $I^k$ such that 
$$f_{ml}(x,y,z) = f_{ml}(y,x,z)$$ for all $x$, $y$, $z$ and pairs of indices $m < l$ and
 Eq. (\ref{eqn:order1variation}) is satisfied  for all $x_1\preceq x_2$ and $y_1\preceq y_2$ in $I$ and all pairs of indices $i < j$ and all $z$, then
\begin{equation*} g(\{f(b_{(j-1)k+1},b_{(j-1)k+2},\cdots, b_{jk}|j=1.\cdots, n\}) \\ \preceq_d   g(\{f(a_{(j-1)k+1},a_{(j-1)k+2},\cdots, a_{jk}|j=1,\cdots, n\}) \end{equation*}
\end{theorem}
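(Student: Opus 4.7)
The plan is to mirror the proof of Theorem \ref{thm:variation}, with Theorem \ref{thm:newRImulti} playing the role of Theorem \ref{thm:newRI} to accommodate the $k$-ary function $f$. Let $c_i$ be a permutation of the $b_i$ such that $v = g(\{f(c_{(j-1)k+1}, \ldots, c_{jk}) : j = 1, \ldots, n\})$ attains a maximum under $\preceq_d$ over all permutations of $\{a_i\}$; it suffices to show that one such maximizer coincides with $a$, so that $g(\{f(b_{(j-1)k+1}, \ldots, b_{jk})\}) \preceq_d v = g(\{f(a_{(j-1)k+1}, \ldots, a_{jk})\})$.

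First, by the pairwise symmetry $f_{ml}(x,y,z) = f_{ml}(y,x,z)$, I may reorder the elements inside each block $C_j = (c_{(j-1)k+1}, \ldots, c_{jk})$ freely without changing $f(C_j)$ or $v$, so I assume $c_{(j-1)k+1} \preceq \cdots \preceq c_{jk}$ for every $j$. Next, viewing the arrangement as an $n \times k$ array whose rows are the blocks, I apply Theorem \ref{thm:newRImulti} to the $k$ columns regarded as sequences: independently sorting each column and taking the identity pairing cannot decrease $v$, so by maximality the sorted-column arrangement is also a maximizer. Alternating these two reductions stabilizes (since there are only finitely many permutations of $\{a_i\}$) at a Young-tableau-like configuration in which both rows and columns of the array are sorted and $v$ is still maximal.

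It remains to show that such a stable maximizer is the consecutive sorted arrangement $c_i = a_i$. Following the end of Theorem \ref{thm:variation}'s proof, I argue by contradiction: if $c_{jk+1} \prec c_{jk}$ for some $j < n$, I swap $c_{jk}$ with $c_{jk-1}$ inside $C_j$ using pairwise $f$-symmetry, which leaves $v$ unchanged but breaks row- and column-sortedness locally. Re-applying the first two reductions restores sortedness while keeping $v$ maximal, and a bookkeeping argument analogous to that of Theorem \ref{thm:variation} shows that the violation at index $jk$ has now been removed. Iterating the procedure for $j = n-1, n-2, \ldots, 1$ yields $c_1 \preceq c_2 \preceq \cdots \preceq c_{kn}$, completing the proof. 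I expect this final step to be the main obstacle: for $k=2$ (Theorem \ref{thm:variation}) the within-block swap moves only two elements and the re-sort is easy to track, whereas for general $k$ the within-block swap perturbs several positions at once and the subsequent re-sort may merely relocate the offending inversion rather than eliminate it. The delicate point is to exhibit a combinatorial invariant — for example, the cardinality of $\{j : c_{jk+1} \prec c_{jk}\}$ or a lexicographic measure of disorder of $(c_1, \ldots, c_{kn})$ — that strictly decreases under each iteration and thus guarantees termination at the fully sorted configuration.
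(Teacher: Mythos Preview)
Your outline follows essentially the same strategy as the paper's proof: take a maximizing permutation, use the pairwise symmetry of $f$ to sort within each block and Theorem~\ref{thm:newRImulti} to sort the columns, then repair block-boundary inversions $c_{jk+1}\prec c_{jk}$ by a within-block swap followed by re-sorting, working downward in $j$. The only material difference is the choice of swap: the paper exchanges the \emph{first} and \emph{last} entries of the offending block (e.g.\ $d_{k(n-2)+1}$ with $d_{k(n-1)}$), whereas you swap the last two. Your explicit worry about termination is well placed, but the paper handles it at exactly the same level of detail you do --- it simply asserts the process ``terminates after a finite number of times'' and then iterates over $j=n-1,\ldots,2$ --- so at the granularity of the paper's own argument your proposal matches it.
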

\begin{proof}
The proof is similar to Theorem \ref{thm:variation}.
Let $d_i$ be a permutation of $b_i$ such that $$v =  g(\{f(d_{(j-1)k+1},d_{(j-1)k+2},\cdots, d_{jk}|j=1,\cdots n\})$$ is a maximal element. Then 
by Theorem \ref{thm:newRImulti}, $d_i$ can be chosen such that $d_{(j-1)k+i} \preceq d_{jk+i}$ for $1\leq i\leq k$ and $1\leq j\leq n-1$. Furthermore, by Eq. (\ref{eqn:symmetry}) we can also assume that $d_{(j-1)k+i} \preceq  d_{(j-1)k+i+1}$ for $1\leq i\leq k-1$ and $1\leq j\leq n$.
Suppose $d_{k(n-1)+1} \prec d_{k(n-1)}$. By Eq. (\ref{eqn:symmetry}) we can swap $d_{k(n-2)+1}$ and $d_{k(n-1)}$ without changing the value of $v$.
Again by repeated application of Eq. (\ref{eqn:symmetry}) and Theorem \ref{thm:newRI}, we can reorder $d_{i}$  such that $d_{(j-1)k+i} \preceq d_{jk+i}$ for $1\leq i\leq k$ and $1\leq j\leq n-1$ without changing $v$ while ensuring $d_{(j-1)k+i} \preceq  d_{(j-1)k+i+1}$ for $1\leq i\leq k-1$ and $1\leq j\leq n$. If $d_{k(n-1)+1} \prec d_{k(n-1)}$ we repeat this process (which terminates after a finite number of times) until $d_{k(n-1)+1} \succeq d_{k(n-1)}$. Applying this procedure for $j$ from $n-1,\cdots, 3, 2$ sequentially shows that for each $2\leq j\leq n$, $d_{(j-1)k+1}
\succeq  d_{k(j-1)}$.  This along with  $d_{(j-1)k+i} \preceq  d_{(j-1)k+i+1}$ for $1\leq i\leq k-1$ and $1\leq j\leq n$ shows that $d_1\preceq d_2\preceq \cdots \preceq d_{kn}$.
\end{proof}

We get the following result when we set   $g(x_1,\cdots , x_n) = \sum_{i=1}^n x_i$ and $f(x_1,\cdots , x_k) = \prod_{i=1}^k x_i$ or if we set  $g(x_1,\cdots , x_n) = -\prod_{i=1}^n x_i$ and $f(x_1,\cdots , x_k) = \sum_{i=1}^k x_i$.
\begin{corollary} \label{cor:variation-prodsum-multiple}
Let $a_i\geq 0$ be a set of $kn$ numbers and let $b_i$ be the numbers $a_i$ reordered such that $b_1 \leq b_2 \leq \cdots \leq b_{kn}$. 
Then 
$$n\sqrt[n]{\prod_{i=1}^{kn} a_i}\leq \sum_{j=1}^n \prod_{i=1}^k a_{(j-1)k+i} \leq \sum_{j=1}^n \prod_{i=1}^k b_{(j-1)k+i}$$
and 
$$\prod_{j=1}^n \sum_{i=1}^k b_{(j-1)k+i} \leq \prod_{j=1}^n \sum_{i=1}^k a_{(j-1)k+i} \leq \left(\frac{\sum_{i=1}^{kn} a_i}{n}\right)^n.$$
Suppose there exists $c_i$, a reordering of the numbers $a_i$ such that  $\prod_{i=1}^k c_{(j-1)k+i} =  \prod_{i=1}^k c_{(l-1)k+i}$ for all $1\leq j,l\leq n$.
Then 
$$\sum_{j=1}^n \prod_{i=1}^k c_{(j-1)k+i} \leq \sum_{j=1}^n \prod_{i=1}^k a_{(j-1)k+i}$$
Suppose there exists $c_i$, a reordering of  the numbers $a_i$ such that  $\sum_{i=1}^k c_{(j-1)k+i} =  \sum_{i=1}^k c_{(l-1)k+i}$ for all $1\leq j,l\leq n$, then
$$\prod_{j=1}^n \sum_{i=1}^k a_{(j-1)k+i} \leq \prod_{j=1}^n \sum_{i=1}^k c_{(j-1)k+i}$$

\end{corollary}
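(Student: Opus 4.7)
The plan is to derive all four inequalities in the corollary by combining Theorem \ref{thm:multiple-seq-variant} with the AM-GM inequality (Lemma \ref{lem:am-gm}). The rightmost inequality in the first display and the leftmost inequality in the second display are direct instances of Theorem \ref{thm:multiple-seq-variant}---with the roles of its sorted sequence and arbitrary permutation played by the corollary's sorted $b_i$ and arbitrary $a_i$, respectively---specialized to $g(x_1,\ldots,x_n)=\sum_i x_i$, $f(x_1,\ldots,x_k)=\prod_i x_i$ for the first display, and $g(x_1,\ldots,x_n)=-\prod_i x_i$, $f(x_1,\ldots,x_k)=\sum_i x_i$ for the second. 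In each case, symmetry of $f$ in its arguments is immediate, and Eq. (\ref{eqn:order1variation}) reduces, after fixing the other coordinates, to $(x_2-x_1)(y_2-y_1)P \ge 0$, where $P$ is either a product of the remaining nonnegative entries or simply $1$. This is the one step at which the hypothesis $a_i \ge 0$ is essential, and I expect it to be the main (albeit mild) obstacle: one must be careful to invoke nonnegativity both for the product case and to guarantee well-definedness in the sum-of-products reading of $-\prod$.

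Next, the leftmost bound of the first display and the rightmost bound of the second display come from applying Lemma \ref{lem:am-gm} to the $n$-term aggregates $P_j := \prod_{i=1}^k a_{(j-1)k+i}$ and $S_j := \sum_{i=1}^k a_{(j-1)k+i}$. Because the $P_j$ collectively cover every $a_i$ exactly once, $\prod_{j=1}^n P_j = \prod_{i=1}^{kn} a_i$, and AM-GM yields $\sum_j P_j \ge n\sqrt[n]{\prod_{i=1}^{kn}a_i}$. Dually, $\sum_j S_j = \sum_{i=1}^{kn} a_i$, so AM-GM gives $\prod_j S_j \le \bigl(n^{-1}\sum_{i=1}^{kn}a_i\bigr)^n$.

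For the two conditional statements, I would use the equality case of AM-GM. If a reordering $c_i$ equalizes all $k$-term products to a common value $Q$, then $Q^n = \prod_j \prod_i c_{(j-1)k+i} = \prod_{i=1}^{kn}a_i$, so $\sum_j \prod_i c_{(j-1)k+i} = nQ = n\sqrt[n]{\prod_{i=1}^{kn} a_i}$, which by the first part of the corollary is itself a lower bound for $\sum_j \prod_i a_{(j-1)k+i}$ under any arrangement of the $a_i$. The sum-balanced reordering gives the dual statement by the same argument. The only other bookkeeping required is to align the corollary's notational convention (where $a_i$ is arbitrary and $b_i$ is sorted) with the convention of Theorem \ref{thm:multiple-seq-variant} (where the sorted sequence is called $a$); the substantive content is carried entirely by that theorem and by AM-GM.
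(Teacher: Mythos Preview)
Your proposal is correct and matches the paper's approach exactly: the paper states the corollary as a direct consequence of Theorem \ref{thm:multiple-seq-variant} with the choices $g=\sum$, $f=\prod$ and $g=-\prod$, $f=\sum$, and attributes the outer bounds $n\sqrt[n]{\prod a_i}$ and $(n^{-1}\sum a_i)^n$ to the AM-GM inequality (Lemma \ref{lem:am-gm}); the conditional clauses then follow from the equality case of AM-GM, just as you describe.
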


The bounds $n\sqrt[n]{\prod_{i=1}^{kn} a_i}$ and $\left(\frac{\sum_{i=1}^{kn} a_i}{n}\right)^n$ in Corollary \ref{cor:variation-prodsum-multiple} are due to the AM-GM inequality (Lemma \ref{lem:am-gm}).

\subsection{The special case when $a_i$ is an arithmetic progression}
In general, Corollary \ref{cor:variation-prodsum-multiple} provides a tight bound only on one side. On the other hand, both a tight upper and lower bound can be derived under certain conditions when the numbers $a_i$ form an arithmetic progression.

\begin{definition}
For a permutation $\sigma$ of $\{1,\cdots, kn\}$, define 
$$v(n,k) = \sum_{i=1}^{n}\prod_{j=1}^k a_{\sigma((i-1)k+j)}.$$ Let $v_{\min}(n,k)$ and $v_{\max}(n,k)$ be the minimal and maximal values respectively of $v(n,k)$ among all permutations $\sigma$ of $\{1,\cdots, kn\}$.
\end{definition}

\begin{definition}
For a permutation $\sigma$ of $\{1,\cdots, kn\}$, define 
$$w(n,k) = \prod_{i=1}^{n}\sum_{j=1}^k a_{\sigma((i-1)k+j)}.$$  Let $w_{\min}(n,k)$ and $w_{\max}(n,k)$ be the minimal and maximal values respectively of $w(n,k)$ among all permutations $\sigma$ of $\{1,\cdots, kn\}$.
\end{definition}

Suppose $a_i\geq 0$ is an arithmetic progression, with $a_i = a_1 + (i-1)d$, for $i=1,\cdots, kn$, $d\geq 0$. Corollary \ref{cor:variation-prodsum-multiple} implies that
\begin{theorem}
\begin{itemize}
\item $v_{\min}(n,k) \geq nd^k\sqrt[n]{\frac{\Gamma\left(\frac{a_1}{d}+nk\right)}{\Gamma\left(\frac{a_1}{d}\right)}}$.
\item $v_{\max}(n,k) = \sum_{i=1}^{n}\prod_{j=1}^k a_{(i-1)k+j} = d^k\sum_{i=1}^{n} \frac{\Gamma\left(\frac{a_1}{d} + ik\right)}{\Gamma\left(\frac{a_1}{d}+(i-1)k\right)}$.
\item $w_{\max}(n,k) \leq \left(\frac{k\left(a_1+a_{kn}\right)}{2}\right)^n$.
\item \begin{equation*}w_{\min}(n,k) = \prod_{i=1}^{n}\sum_{j=1}^k a_{(i-1)k+j} \\ = k^n\prod_{i=1}^n \left(a_1+\left(ik-\frac{k+1}{2}\right)d\right) = k^{2n}d^{n}\frac{\Gamma\left(n+\frac{2a_1+(k-1)d}{2kd}\right)}{\Gamma\left(\frac{2a_1+(k-1)d}{2kd}\right)}.\end{equation*}
\end{itemize}
\end{theorem}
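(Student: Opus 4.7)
The plan is to derive each of the four items as a direct consequence of Corollary \ref{cor:variation-prodsum-multiple} applied to the arithmetic progression $a_i = a_1 + (i-1)d$, followed by rewriting the resulting closed-form expression using the Pochhammer identity $\prod_{j=0}^{m-1}(\alpha+j) = \Gamma(\alpha+m)/\Gamma(\alpha)$. The key observation is that, since $a_1 \le a_2 \le \cdots \le a_{kn}$ already, the ``sorted'' sequence $b_i$ in Corollary \ref{cor:variation-prodsum-multiple} coincides with $a_i$ itself. Consequently, the sorting-optimality halves of that corollary supply \emph{exact} formulas for $v_{\max}$ and $w_{\min}$, attained at the identity permutation, while the AM-GM halves supply the bounds for $v_{\min}$ and $w_{\max}$.

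I would handle the two bounds first. For the first item, apply $v_{\min}(n,k) \geq n\sqrt[n]{\prod_{i=1}^{kn} a_i}$ from the corollary and substitute $\prod_{i=1}^{kn}(a_1+(i-1)d) = d^{kn}\,\Gamma(a_1/d + kn)/\Gamma(a_1/d)$, which immediately yields the stated expression. For the third item, use $w_{\max}(n,k) \leq \bigl(\sum_{i=1}^{kn} a_i / n\bigr)^n$ together with $\sum_{i=1}^{kn} a_i = kn(a_1+a_{kn})/2$, giving $(k(a_1+a_{kn})/2)^n$.

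For $v_{\max}$, the identity permutation is optimal by the corollary, so I would compute the inner product directly: $\prod_{j=1}^k a_{(i-1)k+j} = d^k \prod_{l=0}^{k-1}(a_1/d + (i-1)k + l) = d^k\,\Gamma(a_1/d + ik)/\Gamma(a_1/d + (i-1)k)$, then sum over $i$. For $w_{\min}$, again the identity is optimal, and a direct computation gives $\sum_{j=1}^k a_{(i-1)k+j} = k\bigl(a_1 + (ik - (k+1)/2)d\bigr)$. Factoring $kd$ out of each of the $n$ factors and writing $\alpha = (2a_1 - (k+1)d)/(2kd)$ converts the product into $k^n(kd)^n \prod_{i=1}^n(i+\alpha) = k^{2n}d^n\,\Gamma(n+\alpha+1)/\Gamma(\alpha+1)$; the identity $\alpha + 1 = (2a_1+(k-1)d)/(2kd)$ then puts the formula in the shape stated in the theorem.

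The main obstacle is purely algebraic bookkeeping in the $w_{\min}$ item: tracking the normalization when pulling $kd$ out of each factor and verifying that the Pochhammer shift equals the Gamma argument stated in the theorem. Beyond this, no ideas beyond Corollary \ref{cor:variation-prodsum-multiple} and standard Gamma/Pochhammer identities are needed.
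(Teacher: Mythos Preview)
Your proposal is correct and follows exactly the approach the paper takes: the paper simply asserts that the theorem is what Corollary~\ref{cor:variation-prodsum-multiple} becomes when specialized to the arithmetic progression $a_i=a_1+(i-1)d$, and you have accurately supplied the algebraic verifications (the Pochhammer/Gamma rewriting and the arithmetic-series sum) that the paper leaves implicit.
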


\begin{theorem}
If $k = 2t+nu$ for nonnegative integers $t$ and $u$, then 
$w_{\max}(n,k) = \left(\frac{k(a_1+a_{kn})}{2}\right)^n$.
\end{theorem}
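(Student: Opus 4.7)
The upper bound $w_{\max}(n,k) \leq (k(a_1+a_{kn})/2)^n$ is already recorded in the preceding theorem; it follows from AM--GM applied to the $n$ block sums $s_g = \sum_{j=1}^{k} a_{\sigma((g-1)k+j)}$, whose total is fixed at $\sum_{i=1}^{kn} a_i = kn(a_1+a_{kn})/2$. The task therefore reduces to exhibiting, whenever $k=2t+nu$, a partition of $\{1,\dots,kn\}$ into $n$ subsets of size $k$ all of whose sums equal $k(a_1+a_{kn})/2$.

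My plan is to reduce this directly to Theorem \ref{thm:k=even}. Cut the index set into $k$ consecutive blocks $B_j = \{(j-1)n+1,\dots,jn\}$ and, given any $k$ permutations $\sigma_1,\dots,\sigma_k$ of $\{1,\dots,n\}$, define
\[ G_g = \bigl\{\,(j-1)n + \sigma_j(g) : 1 \leq j \leq k\,\bigr\}, \qquad g = 1,\dots,n. \]
Because each $\sigma_j$ permutes $B_j$, these $n$ groups always form a partition of $\{1,\dots,kn\}$ into pieces of size $k$. Since $a_i = a_1 + (i-1)d$, a direct expansion gives
\[ \sum_{i \in G_g} a_i \;=\; k a_1 + d\!\left( \frac{nk(k-1)}{2} - k + \sum_{j=1}^{k} \sigma_j(g) \right), \]
so equality of the $n$ group sums reduces precisely to finding $k$ permutations of $\{1,\dots,n\}$ whose pointwise sum $\sum_{j=1}^{k} \sigma_j(g)$ is independent of $g$.

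This is exactly what the construction in Theorem \ref{thm:k=even} supplies when applied to the arithmetic progression $(1,2,\dots,n)$: for $k = 2t + nu$, taking $t$ copies of the identity, $t$ copies of the reverse permutation, and $u$ copies of each cyclic permutation $r_1,\dots,r_n$ yields the constant value $\sum_{j} \sigma_j(g) = k(n+1)/2$ for every $g$. Substituting this into the displayed expression collapses each group sum to $k a_1 + d k(kn-1)/2 = k(a_1+a_{kn})/2$, making the AM--GM bound tight and proving the theorem. The only step requiring thought is the reduction itself; once the block/permutation parameterization is set up, the identification of $\sum_j \sigma_j(g)$ with the equal-sum condition of Theorem \ref{thm:k=even} is transparent, and the remainder is routine bookkeeping that invokes the earlier theorem as a black box.
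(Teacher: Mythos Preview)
Your proof is correct and follows essentially the same approach as the paper: you partition $\{1,\dots,kn\}$ into $k$ consecutive blocks of size $n$, apply the $k$ permutations from Theorem~\ref{thm:k=even} to these blocks (shifting the $j$-th permutation by $(j-1)n$), and verify that the resulting $n$ groups have equal sums, making the AM--GM bound tight. The paper's proof is terser---it states the block-shift idea in one sentence and gives an example for $n=k=3$---whereas you have written out the parameterization $G_g=\{(j-1)n+\sigma_j(g)\}$ and the arithmetic explicitly, but the underlying construction is identical.
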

\begin{proof}
The proof is similar to the proof of Theorem
 \ref{thm:k=even}. Instead of using cyclic permutations $r_i$ of $\{1,\cdots, n\}$ and the permutation $(n,n-1,\cdots, 1)$, we apply them to $((j-1)n+1,(j-1)n+2, \cdots, jn)$ and this is equivalent to adding $(j-1)n$ to each term of the $j$-th permutation. For instance, for $n=k=3$, $w(n,k)$ is maximized by $(a_1,a_5,a_9,a_2,a_6,a_7,a_3,a_4,a_8)$.
 \end{proof}
 
 This implies that if $n$ is odd and $k\geq n-1$ or if $k$ is even, then $w_{\max}(n,k) = \left(\frac{k(a_1+a_{kn})}{2}\right)^n$.
 
 \begin{theorem} \label{thm:k=odd-variation}
 If $n$ is even and $k$ is odd such that $k\geq n-1$, then
 \[ w_{\max}(n,k) = \left(ka_1 + \left(\frac{k(kn-1)-1}{2}\right)d\right)^{n/2}  \left(ka_1 + \left(\frac{k(kn-1)+1}{2}\right)d\right)^{n/2} \]
 \end{theorem}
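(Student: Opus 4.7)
The plan is to follow the template of Theorem~\ref{thm:k=odd} applied block by block, as in the proof of the preceding theorem (the $k$-even case for the variant). Set $w_i = \sum_{j=1}^k \sigma((i-1)k+j)$, so that the $i$-th slot sum is $ka_1 + (w_i-k)d$ and $w(n,k) = \prod_i (ka_1 + (w_i-k)d)$. Since $\sum_i w_i = kn(kn+1)/2$ and $k$, $kn+1$ are both odd while $n$ is even, this total is not divisible by $n$; the balanced-as-possible integer distribution has $n/2$ of the $w_i$ equal to $(k(kn+1)-1)/2$ and the remaining $n/2$ equal to $(k(kn+1)+1)/2$, which, after substituting into $ka_1 + (w_i-k)d$, produces exactly the two factors in the claimed formula.

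For the upper bound I would adapt Lemma~\ref{lem:maxperm}. Lemma~\ref{lem:max2}, applied with $x = ka_1 - kd$, still implies that for any two slots $i,j$ with $w_i + w_j$ held fixed the two-factor product is strictly larger when $|w_i - w_j|$ is strictly smaller. Hence at a maximizer of $w(n,k)$ we must have $|w_i - w_j| \leq 1$ for every pair $i,j$; otherwise an element swap between the corresponding slots $A_i, A_j$ (the sets of indices assigned by $\sigma$ to these slots) would strictly improve $w$. Combined with the sum constraint, this forces the half-and-half distribution described above, giving the upper bound.

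For achievability, write $k = 2t + (n-1)$ and partition $\{1,\ldots,kn\}$ into consecutive blocks $B_j = \{(j-1)n+1,\ldots,jn\}$ of size $n$. Assign to the blocks the local permutations of $\{1,\ldots,n\}$ used in the proof of Theorem~\ref{thm:k=odd}, one to each block: $t$ copies of the identity, $t$ copies of the reverse, the permutation $\tilde\sigma$ from Lemma~\ref{lem:n-1seq}, and the cyclic permutations $r_i$ for $i \in S$. Define the global $\sigma$ by placing the element $(j-1)n + \sigma_j(i)$ of block $B_j$ into slot $i$, so that $w_i = n\binom{k}{2} + \tilde w_i$ with $\tilde w_i = \sum_j \sigma_j(i)$. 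By Lemma~\ref{lem:n-1seq}, together with the fact that each identity--reverse pair contributes $n+1$ to every $\tilde w_i$, we get $\tilde w_i = (k(n+1)\pm 1)/2$ and hence $w_i = (k(kn+1)\pm 1)/2$ with exactly $n/2$ values of each sign, matching the target distribution and completing the proof.

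The main obstacle is the rigorous version of the swap step in the upper bound: because $\sigma$ is a single permutation of $\{1,\ldots,kn\}$ rather than a $k$-tuple of permutations on $\{1,\ldots,n\}$, I must show that whenever $w_j - w_i \geq 2$ there actually exist $a \in A_i$ and $b \in A_j$ with $0 < b - a < w_j - w_i$, so that swapping $a$ and $b$ strictly decreases $|w_j - w_i|$ without disturbing any other slot. A short case analysis, for instance taking $b$ to be the smallest element of $A_j$ exceeding some $a \in A_i$ (which exists because the mean of $A_j$ exceeds that of $A_i$), handles this; once this lemma is in hand, the remainder of the argument mirrors Theorem~\ref{thm:k=odd} verbatim.
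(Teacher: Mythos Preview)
Your construction and overall strategy coincide with the paper's proof: shift the $j$-th permutation from the proof of Theorem~\ref{thm:k=odd} by $(j-1)n$ to assemble a single permutation of $\{1,\dots,kn\}$, so that $w_i = n\binom{k}{2} + \tilde w_i$ takes the balanced values $(k(kn+1)\pm 1)/2$, exactly as the paper does (in one sentence).

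Where you diverge is in worrying about the upper bound. The ``main obstacle'' you isolate is both unnecessary and not resolvable the way you suggest. The two-slot swap need not exist: with $n=4$, $k=3$, take $A_i=\{2,5,8\}$ and $A_j=\{1,4,12\}$ (the remaining six elements of $\{1,\dots,12\}$ filling the other two slots); then $w_j-w_i=2$ but no pair $a\in A_i$, $b\in A_j$ has $b-a=1$, so no single swap between these two slots strictly reduces $|w_j-w_i|$. Your ``short case analysis'' therefore cannot close the gap as written.

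The clean route, which is what the paper (via Lemma~\ref{lem:maxperm}) is really using, avoids in-configuration swaps entirely. Lemma~\ref{lem:max2} already shows that among \emph{all} integer tuples $(w_1,\dots,w_n)$ with sum $kn(kn+1)/2$, the product $\prod_i\bigl(k(a_1-d)+w_id\bigr)$ is largest at the balanced tuple: iterate the Robin Hood transfer $w_i\mapsto w_i-1$, $w_j\mapsto w_j+1$ on abstract integer tuples until balanced, with no requirement that intermediate tuples arise from any permutation. Every $\sigma$ produces such an integer tuple, and your construction realizes the balanced one, so $w_{\max}(n,k)$ equals the balanced product and the stated formula follows.
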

\begin{proof} The proof is similar to the proof of Theorem \ref{thm:k=odd}, except that we add $(j-1)n$ to each term of the $j$-th permutation in the $k$-set of permutations of $\{1,\cdots , n\}$.
This adds an additional $\sum_{j=1}^k (j-1)n = (k-1)kn/2$ to each $w_i$ and thus $w_i = \frac{k(kn+1)-1}{2}$ for $i=1,\cdots, n/2$, and $w_i = \frac{k(kn+1)+1}{2}$ for $i = n/2+1,\cdots, n$.
Thus $w_{\max}(n,k) = \prod_{i=1}^n k(a_1-d)+w_id = \left(k(a_1-d)+ \frac{d(k(kn+1)-1)}{2}\right)^{n/2} \left(k(a_1-d)+ \frac{d(k(kn+1)+1)}{2}\right)^{n/2}$ and the conclusion follows.
\end{proof}

Analogous to Theorem \ref{thm:geometric_k=even}, we have the following result for a geometric progression:

\begin{theorem}
For a geometric progression sequence $a_i = cd^{b_i}$ where $c,d\geq 1$ and $b_i$ is an arithmetic progression of $kn$ nonnegative numbers, if $k=2t+nu$ for $t,u\geq 0$, then
$v_{\min}(n,k) = n\prod_{i=1}^{kn} a_i^{1/n} = nc^kd^{\frac{k(b_1+b_{kn})}{2}}$.
\end{theorem}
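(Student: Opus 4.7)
The plan is to combine the AM-GM lower bound in Corollary \ref{cor:variation-prodsum-multiple} with the explicit equal-sum partition of an arithmetic progression supplied by the $k = 2t+nu$ theorem for $w_{\max}$ in the previous subsection. The key observation is that for a geometric progression $a_i = cd^{b_i}$, the logarithms $\log a_i = \log c + b_i \log d$ form an arithmetic progression, so the condition that all the block products $\prod_{j=1}^k a_{\sigma((i-1)k+j)}$ are equal is exactly the condition that all the block sums $\sum_{j=1}^k b_{\sigma((i-1)k+j)}$ are equal.

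Concretely, I would proceed as follows. First, by Corollary \ref{cor:variation-prodsum-multiple} (i.e.\ AM-GM applied to the $n$ products $X_i = \prod_{j=1}^k a_{\sigma((i-1)k+j)}$, whose overall product is $\prod_{i=1}^{kn} a_i$ for any permutation $\sigma$), one has
\[
v(n,k) \;\geq\; n\sqrt[n]{\prod_{i=1}^{kn} a_i},
\]
with equality if and only if all the $X_i$ are equal. Second, under the hypothesis $k = 2t+nu$, apply the arithmetic-progression theorem from the preceding subsection (the analogue of Theorem \ref{thm:k=even}) to the arithmetic progression $b_1,\ldots,b_{kn}$: this yields a permutation $\sigma$ of $\{1,\ldots,kn\}$ such that the $n$ block sums $\sum_{j=1}^k b_{\sigma((i-1)k+j)}$ are all equal to $k(b_1+b_{kn})/2$. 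Third, using $a_i = cd^{b_i}$, this permutation produces equal block products
\[
\prod_{j=1}^k a_{\sigma((i-1)k+j)} \;=\; c^k d^{\,k(b_1+b_{kn})/2},
\]
for every $i$, so AM-GM is attained.

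Finally, I would evaluate the bound explicitly. Since $\prod_{i=1}^{kn} a_i = c^{kn} d^{\sum_{i=1}^{kn} b_i} = c^{kn} d^{kn(b_1+b_{kn})/2}$, the $n$-th root gives $c^k d^{k(b_1+b_{kn})/2}$, so
\[
v_{\min}(n,k) \;=\; n\,c^k d^{\,k(b_1+b_{kn})/2},
\]
as claimed. The only non-routine ingredient is the transfer of the equal-sum partition from the arithmetic progression $\{b_i\}$ to equal products in $\{a_i\}$ via the logarithm; this is immediate because $\log$ converts the partition condition exactly, and no positivity issues arise since $c,d\geq 1$ and $b_i \geq 0$ guarantee $a_i \geq 1 > 0$. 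Thus the argument is essentially a one-line reduction to the already-proven arithmetic-progression case, with no real obstacle beyond verifying this correspondence.
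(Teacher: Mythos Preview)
Your proof is correct and follows essentially the same approach as the paper. The paper itself gives no explicit proof here, merely noting that the result is analogous to Theorem~\ref{thm:geometric_k=even}: pass to logarithms so that the $a_i$ become an arithmetic progression, invoke the equal-block-sum construction underlying the $k=2t+nu$ result in this subsection, and conclude via the AM--GM lower bound from Corollary~\ref{cor:variation-prodsum-multiple} --- which is precisely what you do.
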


\subsection{The special case $a_i = i$}

\begin{definition}
For a permutation $\sigma$ of $\{1,\cdots, kn\}$, define $$v(n,k) = \sum_{i=1}^{n}\prod_{j=1}^k \sigma((i-1)k+j).$$ Let $v_{\min}(n,k)$ and $v_{\max}(n,k)$ be the minimal and maximal values respectively of $v(n,k)$ among all permutations $\sigma$ of $\{1,\cdots, kn\}$.
\end{definition}

\begin{definition}
For a permutation $\sigma$ of $\{1,\cdots, kn\}$, define 
$$w(n,k) = \prod_{i=1}^{n}\sum_{j=1}^k \sigma((i-1)k+j).$$ Let $w_{\min}(n,k)$ and $w_{\max}(n,k)$ be the minimal and maximal values respectively of $w(n,k)$ among all permutations $\sigma$ of $\{1,\cdots, kn\}$.
\end{definition}

We have $v_{\min}(n,1) = w_{\max}(1,n) = n(n+1)/2$, $v_{\min}(1,k) = w_{\max}(k,1) = k!$, and $v_{\min}(n,k) \geq n\sqrt[n]{(kn)!}$. Furthermore, $w_{\max}(n,k) \leq \left(\frac{k(nk+1)}{2}\right)^n$ with equality if $k = 2t+nu$ for nonnegative integers $t$ and $u$.
\begin{theorem} $v_{\min}(n,2) = n(n+1)(2n+1)/3$, $w_{\max}(n,2) = (2n+1)^n$. 
\end{theorem}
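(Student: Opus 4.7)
The plan is to read off both identities directly from Corollary \ref{cor:variation-sumprod} applied to the already-sorted sequence $a_i = b_i = i$ for $i = 1,\ldots,2n$, and then evaluate the resulting closed-form expressions.

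For $v_{\min}(n,2)$, Corollary \ref{cor:variation-sumprod} says the minimum of $\sum_{i=1}^n a_{\sigma(2i-1)}a_{\sigma(2i)}$ over all permutations $\sigma$ of $\{1,\ldots,2n\}$ equals $\sum_{i=1}^n b_i b_{2n-i+1}$. With $b_i = i$ this becomes $\sum_{i=1}^n i(2n-i+1)$. The remaining step is the routine calculation
\[
\sum_{i=1}^n i(2n+1-i) = (2n+1)\sum_{i=1}^n i - \sum_{i=1}^n i^2 = (2n+1)\frac{n(n+1)}{2} - \frac{n(n+1)(2n+1)}{6} = \frac{n(n+1)(2n+1)}{3},
\]
using the standard formulas for $\sum i$ and $\sum i^2$.

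For $w_{\max}(n,2)$, the same corollary (second inequality, noting that $a_i = i \geq 0$) gives the maximum of $\prod_{i=1}^n (a_{\sigma(2i-1)}+a_{\sigma(2i)})$ as $\prod_{i=1}^n (b_i + b_{2n-i+1})$. Since every pair $(i, 2n-i+1)$ sums to the constant $2n+1$, the product collapses immediately to $(2n+1)^n$.

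The only nontrivial ingredient is already packaged inside Corollary \ref{cor:variation-sumprod}, so there is no real obstacle here; both identities are essentially corollaries of a corollary. It is worth remarking that the second identity could alternatively have been obtained from the earlier bound $w_{\max}(n,k) \leq \left(\frac{k(a_1+a_{kn})}{2}\right)^n$ (Theorem preceding the geometric-progression result in this subsection) with equality whenever the constant-sum pairing $(i,2n-i+1)$ is admissible, which it trivially is for $k=2$; this provides a quick sanity check of the claimed value.
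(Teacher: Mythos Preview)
Your proof is correct and follows essentially the same approach as the paper: identify the extremal pairing $(i,2n+1-i)$ and then carry out the identical arithmetic $\sum i(2n+1-i)=n(n+1)(2n+1)/3$ and $\prod(2n+1)=(2n+1)^n$. The only cosmetic difference is that you invoke Corollary~\ref{cor:variation-sumprod} (the $k=2$ statement, which directly supplies both extremal permutations) whereas the paper cites Corollary~\ref{cor:variation-prodsum-multiple}; your citation is in fact the tighter one for $v_{\min}$, since Corollary~\ref{cor:variation-prodsum-multiple} does not by itself pin down the minimizing permutation for the sum of products.
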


\begin{proof}
By Corollary \ref{cor:variation-prodsum-multiple}, $v_{\min}(n,2) = \sum_{i=1}^n i(2n-i+1) = (2n+1)\sum_{i}^n i - \sum_{i}^n i^2 = n(n+1)(2n+1)/2 - n(n+1)(2n+1)/6 =  n(n+1)(2n+1)/3$. Similarly,
$w_{\max}(n,2) = \prod_{i=1}^n (i+(2n-i+1)) = (2n+1)^n$. 
\end{proof}

Theorem \ref{thm:k=odd-variation} implies that
\begin{corollary}
If $n$ is even and $k$ is odd such that $k\geq n-1$, then $w_{\max}(n,k) = \left(\frac{k^2(kn+1)^2 -1}{4}\right)^{n/2}$.
\end{corollary}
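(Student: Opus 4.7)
The plan is to apply Theorem \ref{thm:k=odd-variation} directly, since the sequence $a_i = i$ is exactly the arithmetic progression with $a_1 = 1$ and common difference $d = 1$, and the hypothesis ($n$ even, $k$ odd, $k \geq n-1$) is identical. Thus no new combinatorial construction is needed; the work is purely algebraic simplification.

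Specifically, I would substitute $a_1 = 1$ and $d = 1$ into the formula
\[ w_{\max}(n,k) = \left(ka_1 + \tfrac{k(kn-1)-1}{2}\,d\right)^{n/2}\left(ka_1 + \tfrac{k(kn-1)+1}{2}\,d\right)^{n/2}, \]
obtaining
\[ w_{\max}(n,k) = \left(k + \tfrac{k(kn-1)-1}{2}\right)^{n/2}\left(k + \tfrac{k(kn-1)+1}{2}\right)^{n/2}. \]
Then I would combine each factor over a common denominator. The first factor becomes $\frac{2k + k(kn-1) - 1}{2} = \frac{k(kn+1) - 1}{2}$, and similarly the second factor becomes $\frac{k(kn+1) + 1}{2}$.

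Finally, pairing the two half-powers and applying the difference-of-squares identity $(A-1)(A+1) = A^2 - 1$ with $A = k(kn+1)$ yields
\[ w_{\max}(n,k) = \left(\frac{(k(kn+1))^2 - 1}{4}\right)^{n/2} = \left(\frac{k^2(kn+1)^2 - 1}{4}\right)^{n/2}, \]
which is the claim. There is no substantive obstacle here; the only thing to be careful about is bookkeeping with the shift from $a_1 - d = 0$ versus $ka_1$ in the two equivalent forms of the expression in Theorem \ref{thm:k=odd-variation}, and confirming the range hypothesis $k \geq n-1$ transfers verbatim.
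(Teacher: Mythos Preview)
Your proposal is correct and matches the paper's approach exactly: the paper simply states that this corollary is implied by Theorem~\ref{thm:k=odd-variation}, and your algebraic substitution of $a_1=1$, $d=1$ followed by the difference-of-squares simplification is precisely how that implication works out.
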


The value of $v_{\min}(n,3)$ can be found in  OEIS \cite{oeis} as OEIS sequence A072368 (\url{https://oeis.org/A072368}). The values of $v_{\min}(n,k)$ can be found in 
sequence A331889 (\url{https://oeis.org/A331889}). The values of $w_{\max}(n,k)$ can be found in 
sequence A333420 (\url{https://oeis.org/A333420}).  The values of $w_{\min}(n,k)$ can be found in 
sequence A333445 (\url{https://oeis.org/A333445}). The values of $v_{\max}(n,k)$ can be found in 
sequence A333446 (\url{https://oeis.org/A333446}).

\section{Rearrangement inequalities for generalized sum-of-products and product-of-sums} \label{sec:RIcalc}
So far the examples above deal mainly with sequences of real numbers. In this section we look at other partially ordered sets for which Eq. (\ref{eqn:order1}) can be satisfied.

\begin{definition}[Ref. \cite{mathdict:1993}]
A partially ordered group $(G,+,\preceq)$ is defined as a group $G$ with group operation $+$ and a partial order $\preceq$ on $G$ such that 
 $z+x\preceq z+y\Leftrightarrow x+z \preceq y+z \Leftrightarrow x\preceq y$ for all $x,y,z\in G$. 
\end{definition}

\begin{definition} \label{def:cg}
Define $\cal C$ as the set of tuples $\ijktuple$ satisfying the following conditions:
\begin{enumerate}
\item $(I,+_I,\preceq_I)$,  $(J,+_J,\preceq_J)$ and  $(K,+_K,\preceq_K)$ are partially ordered Abelian groups.
\item $\ast : I\times J\rightarrow K$ is a {\em distributive} operation, i.e. it satisfies $(x+_Iy)\ast z = x\ast z +_K y\ast z$ and $x\ast (y+_J z) = x\ast y +_K x\ast z$.
\item $\ast$ is nonnegativity-preserving: if $x\succeq_I 0$ and $y\succeq_J 0$, then $x\ast y\succeq_K 0$.
\end{enumerate}
\end{definition}

If $(I,+,\preceq_I))$ is a partially ordered group with an associative, distributive and nonnegativity preserving operation $\ast : I\times I \rightarrow I$ whose identity is in $I$, then $\ituple$ is a partially ordered ring. If in addition $\ast$ is commutative, then $\ituple$ is a partially ordered commutative ring.
We will write a tuple in $\cal C$ as $(I, \preceq_I, J, \preceq_J, K, \preceq_K, \ast)$
if the group operations $+_I$, $+_J$ and $+_K$ can be deduced from context.

\begin{landscape}
\begin{table}[htbp]
\begin{center}
\begin{tabular}{|c|c|c|c|c|c|c|c|}
\hline
$I$& $\preceq_I$& $J$& $\preceq_J$& $K$& $\preceq_K$& $\ast$&\shortstack{symmetric \\$\ast$}\\
\hline\hline
$\mathbb{R}$ & $\leq$ & $\mathbb{R}$  &  $\leq$   & $\mathbb{R}$ & $\leq$  & multiplication & yes \\
\hline
$\mathbb{R}^n$ & \shortstack{induced by\\positive\\cone} & $\mathbb{R}^n$  &  \shortstack{induced by\\positive\\cone}   & $\mathbb{R}$ & $\leq$ & dot product & yes \\
\hline
$\mathbb{R}^n$ &  \shortstack{induced by\\positive\\cone}  & $\mathbb{R}^n$  & \shortstack{induced by\\positive\\cone}   & $\mathbb{R}$ & $\leq$ & \shortstack{$x\ast y = x^TAy$ \\with $A > 0$} & \shortstack{no\\yes if $A=A^T$} \\
\hline
$f:[0,1]\rightarrow\mathbb{R}$ & \shortstack{induced by\\positive\\cone} & $f:[0,1]\rightarrow\mathbb{R}$  & \shortstack{induced by\\positive\\cone}   & $\mathbb{R}$ & $\leq$ & \shortstack{$f\ast g =$\\$\int_{0}^1 f(x)g(x) dx$} & yes \\
\hline
$\mathbb{R}^n$ & \shortstack{induced by\\positive\\cone} & $\mathbb{R}^n$  & \shortstack{induced by\\positive\\cone}   & $\mathbb{R}^n$ &  \shortstack{induced by\\positive\\cone}  & \shortstack{Hadamard\\ product}  & yes \\
\hline
$\mathbb{R}^{n\times n}$ &  \shortstack{induced by\\positive\\cone}  & $\mathbb{R}^{n\times n}$  & \shortstack{induced by\\positive\\cone}   & $\mathbb{R}^{n\times n}$ &  \shortstack{induced by\\positive\\cone} & \shortstack{Matrix \\multiplication} & no \\
\hline
\shortstack{Hermitian\\ matrices} &  \shortstack{ Loewner\\ order}  & \shortstack{Hermitian\\ matrices}  & \shortstack{ Loewner\\ order}  & $\mathbb{R}$ & $\leq$ & \shortstack{Frobenius\\ inner\\ product} & yes \\
\hline
\shortstack{Commuting \\Hermitian\\ matrices}  & \shortstack{ Loewner\\ order}  &\shortstack{Commuting \\Hermitian\\ matrices} &   \shortstack{ Loewner\\ order}   & \shortstack{Hermitian\\ matrices}  &  \shortstack{ Loewner\\ order}  & \shortstack{Matrix \\multiplication} & yes \\
\hline
\shortstack{Hermitian\\ matrices} &  \shortstack{ Loewner\\ order}  & \shortstack{Hermitian\\ matrices}  & \shortstack{ Loewner\\ order}  &  \shortstack{Hermitian\\ matrices}& \shortstack{ Loewner\\ order} & \shortstack{Hadamard\\ product} & yes \\
\hline
\shortstack{Hermitian\\ matrices} &  \shortstack{ Loewner\\ order}  & \shortstack{Hermitian\\ matrices}  & \shortstack{ Loewner\\ order}  &  \shortstack{Hermitian\\ matrices}& \shortstack{ Loewner\\ order} & \shortstack{Kronecker\\ product} & no \\
\hline
\shortstack{Hermitian\\ matrices} &  \shortstack{ Loewner\\ order}  & \shortstack{Hermitian\\ matrices}  & \shortstack{ Loewner\\ order}  &  \shortstack{Hermitian\\ matrices}& \shortstack{ Loewner\\ order} & \shortstack{reverse Kronecker\\ product\tablefootnote{The reverse Kronecker product $A \otimes_r B$ is defined as $B\otimes A$.}} & no \\
\hline
\end{tabular}
\end{center}
\caption{Examples of members in $\cal C$.}\label{tbl:calc}
\end{table}
\end{landscape}

Structures in $\cal C$ have been useful in extending Schur's inequality \cite{wu:schur:2021}. Examples of elements in $\cal C$ are listed in Table \ref{tbl:calc}.
Analogous to Lemma \ref{lem:prodsum} we have
\begin{lemma}\label{lem:calc}
Let $a_1, a_2\in I$, $b_1,b_2\in J$.
If  $a_1 \preceq_I a_2$ and $b_1 \preceq_J b_2$, then
$$  (a_1 \ast b_1) +_K  (a_2\ast b_2) \succeq_K  (a_1\ast b_2) +_K (a_2\ast b_1)$$
If addition $I = J$ and $\ast$ is symmetric, then
$$ ( a_1+_Ib_1)\ast ( a_2+_Ib_2)\preceq_K ( a_1+_Ib_2)\ast ( a_2+_Ib_1)$$
\end{lemma}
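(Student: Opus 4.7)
The plan is to mimic the classical proof of Lemma \ref{lem:prodsum}, noting that its only ingredients are distributivity, commutativity of addition, and the fact that a product of nonnegative quantities is nonnegative. All three ingredients are exactly what Definition \ref{def:cg} guarantees for an element of $\mathcal{C}$, so the same manipulation should go through verbatim once we translate the order relation into a statement about nonnegative elements.

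First, I would record the standard lemma about partially ordered Abelian groups: in $(I,+_I,\preceq_I)$ we have $a_1 \preceq_I a_2$ if and only if $0 \preceq_I a_2 -_I a_1$ (and analogously in $J$), which follows immediately from the translation-invariance axiom by adding $-_I a_1$ to both sides. Thus the hypotheses give nonnegative elements $p = a_2 -_I a_1 \succeq_I 0$ and $q = b_2 -_J b_1 \succeq_J 0$. By the nonnegativity-preserving property (condition 3 of Definition \ref{def:cg}), $p \ast q \succeq_K 0$. Now I expand $p \ast q$ using the two-sided distributivity (condition 2):
\[
p \ast q = a_2 \ast b_2 \;-_K\; a_2 \ast b_1 \;-_K\; a_1 \ast b_2 \;+_K\; a_1 \ast b_1,
\]
and transposing the negative terms (using translation invariance of $\preceq_K$) yields the first claimed inequality.

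For the second inequality, under the extra hypotheses $I = J$ and $\ast$ symmetric, I would simply compute the difference by distributivity:
\[
(a_1 +_I b_2) \ast (a_2 +_I b_1) \;-_K\; (a_1 +_I b_1) \ast (a_2 +_I b_2).
\]
Expanding each product gives four terms, and the two pure-$a$ terms $a_1 \ast a_2$ and the two pure-$b$ terms cancel once we invoke symmetry of $\ast$ to identify $b_i \ast b_j$ with $b_j \ast b_i$. What remains is exactly $a_1 \ast b_1 +_K a_2 \ast b_2 -_K a_1 \ast b_2 -_K a_2 \ast b_1$, which is $\succeq_K 0$ by the first part. Again using translation invariance of $\preceq_K$, this rearranges to the desired inequality.

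I do not expect any serious obstacle: the argument is essentially bookkeeping. The one point that must be handled with care is ensuring that I use symmetry of $\ast$ only where required (in the second statement, to eliminate the $a_i \ast a_j$ versus $a_j \ast a_i$ and $b_i \ast b_j$ versus $b_j \ast b_i$ cross terms), and that every ``subtraction'' is justified by the Abelian group structure rather than by any stronger ring axiom, so that the lemma applies to all entries of Table \ref{tbl:calc}, including those where $\ast$ is not symmetric (for which only the first inequality is asserted).
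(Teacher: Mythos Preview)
Your proposal is correct and follows exactly the paper's approach: both inequalities are reduced to the single statement $(a_2 -_I a_1)\ast(b_2 -_J b_1)\succeq_K 0$, which holds by nonnegativity-preservation. One very minor remark: in your expansion of the second inequality the $a_1\ast a_2$ terms cancel without invoking symmetry (they appear in the same order in both products), while symmetry is actually needed for the mixed terms such as $b_2\ast a_2$ versus $a_2\ast b_2$; this does not affect the validity of the argument.
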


\begin{proof}
This follows from the fact that both inequalities can be rewritten as $(a_2-a_1)\ast (b_2-b_1) \succeq_K 0$.
\end{proof}

\begin{lemma}\label{lem:calc_multi}
Let $I=J=K$ and $a_1, a_2, b_1,b_2, c\in I$.
If  $0 \preceq_I a_1 \preceq_I a_2$, $0 \preceq_I b_1 \preceq_I b_2$, and $c \succeq_I 0$ then
\begin{equation}  (a_1 \ast b_1 \ast c) +_I  (a_2\ast b_2\ast c) \succeq_I  (a_1\ast b_2\ast c) +_I (a_2\ast b_1\ast c)
\label{eqn:calc_multi1}
\end{equation}
If addition $\ast$ is symmetric, then
\begin{equation} ( a_1+_Ib_1 +_I c)\ast ( a_2+_Ib_2 +_I c)\preceq_I ( a_1+_Ib_2 +_I c)\ast ( a_2+_Ib_1 +_I c)
\label{eqn:calc_multi2}
\end{equation}
\end{lemma}

\begin{proof}
By the distributive property of $\ast$, Eq. (\ref{eqn:calc_multi1}) can be written as
$$((a_1 \ast b_1) +_I  (a_2\ast b_2)) \ast c \succeq_I ((a_1 \ast b_2) +_I  (a_2\ast b_1)) \ast c$$
which is true by Lemma \ref{lem:calc} and the nonnegativity preserving property of $\ast$.

Similarly, Eq. (\ref{eqn:calc_multi2}) can be written as:
$$( a_1+_Ib_1)\ast ( a_2+_Ib_2) +_I c\ast (a_1+_I b_1+_I a_2+_Ib_2 +_I c) \preceq_I ( a_1+_Ib_2)\ast ( a_2+_Ib_1) +_I c\ast (a_1+_Ib_1+_Ia_2+_Ib_2 +_I c)$$
which is true by Lemma \ref{lem:calc} and the translation invariant property of the partially ordered group operation $+_I$.
\end{proof}

By choosing $g$ as the sum and $f$ as the product, or choosing $g$ as the product and $f$ as the sum, Theorem \ref{thm:newRI} along with Lemma \ref{lem:calc} can be used to prove the following result

\begin{theorem} \label{thm:newRIcalc}
Let $(I, \preceq_I, J, \preceq_J, K, \preceq_K, \ast)$ be a tuple in $\cal C$.
Let $a_1\preceq_I a_2\preceq_I\cdots \preceq_I a_n$, and  $b_1\preceq_J b_2\preceq_J \cdots \preceq_I b_n$, 
then 
$$\sum_i a_i\ast b_{n-i+1} \preceq_K \sum_i a_i\ast b_{\sigma(i)} \preceq_K \sum_i a_i\ast b_i$$
for all $\sigma\in S_n$.
If in addition
$I=J=K$, $\ast$ is symmetric,
$a_1 \succeq_I 0$ and $b_1\succeq_J 0$, then
$$ \Asterisk_i \left(a_i+b_{n-i+1}\right) \succeq_K \Asterisk_i\left( a_i+b_{\sigma(i)}\right) \succeq_K \Asterisk_i \left(a_i+b_i\right)$$
for all $\sigma\in S_n$.
\end{theorem}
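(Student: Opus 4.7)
The plan is to derive both inequalities by invoking Theorem \ref{thm:newRI} twice, once with each of the two choices of $(f,g)$ suggested by Lemma \ref{lem:calc}. For the sum-of-products bound I would take $f(x,y)=x\ast y$ and $g(x_1,\dots,x_n)=x_1+_K\cdots+_K x_n$. For the product-of-sums bound I would take $f(x,y)=x+_I y$ and $g(x_1,\dots,x_n)=-(x_1\ast x_2\ast\cdots\ast x_n)$, where the minus sign (legal because $K$ is an Abelian group) reverses the partial order on $K$ and thus swaps the roles of minimizer and maximizer in the conclusion of Theorem \ref{thm:newRI}, yielding the $\succeq_K$-chain required by the second part.

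The first choice is immediate: because $g$ is a sum, $g_{ij}(u,v,z)$ differs from $g_{ij}(u',v',z)$ only in the two summands at positions $i$ and $j$, so Eq. (\ref{eqn:order1}) collapses to
\[(x_1\ast y_1)+_K(x_2\ast y_2)\ \succeq_K\ (x_2\ast y_1)+_K(x_1\ast y_2),\]
which is exactly the first statement of Lemma \ref{lem:calc}. Theorem \ref{thm:newRI} then delivers $\sum_i a_i\ast b_{n-i+1}\preceq_K\sum_i a_i\ast b_{\sigma(i)}\preceq_K\sum_i a_i\ast b_i$ verbatim.

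For the second choice the swap hypothesis Eq. (\ref{eqn:order1}) becomes: for all $z$ and indices $i<j$, replacing the pair $(x_1+y_1,x_2+y_2)$ by $(x_2+y_1,x_1+y_2)$ at positions $i,j$ inside the $n$-fold $\ast$-product increases the product in $K$, whenever $x_1\preceq_I x_2$ and $y_1\preceq_I y_2$. The two-factor version $(x_1+y_1)\ast(x_2+y_2)\preceq_K(x_2+y_1)\ast(x_1+y_2)$ is supplied by the second statement of Lemma \ref{lem:calc}. Lifting it to the full $n$-fold product is the main obstacle, and is where the remaining hypotheses $a_1\succeq_I 0$, $b_1\succeq_J 0$, and the symmetry of $\ast$ enter. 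Since $+$ is order-preserving in the partially ordered Abelian group $I=J=K$, every sum $a_i+b_j$ appearing in the relevant $z$-tuples is nonnegative; symmetry together with associativity (implicit in the notation $\Asterisk_i$) allows the remaining $n-2$ factors to be collected into a single nonnegative element $P\in K$, which is nonnegative by iterated application of the nonnegativity-preserving property of $\ast$. Distributivity together with nonnegativity preservation gives the monotonicity
\[u\preceq_K v \text{ and } P\succeq_K 0\ \Longrightarrow\ P\ast u\preceq_K P\ast v,\]
and applying this to the two-factor inequality finishes the verification of Eq. (\ref{eqn:order1}) and hence of the second part of the theorem.
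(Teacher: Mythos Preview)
Your proposal is correct and follows essentially the same route as the paper: the text immediately preceding Theorem~\ref{thm:newRIcalc} states that the result follows from Theorem~\ref{thm:newRI} together with Lemma~\ref{lem:calc} by choosing $g$ as the sum and $f$ as the product for the first part, and $g$ as the (negated) product and $f$ as the sum for the second part, exactly as you do. Your explicit verification of Eq.~(\ref{eqn:order1}) for the product-of-sums case---pulling out the remaining $n-2$ nonnegative factors as a single $P\succeq_K 0$ and using distributivity plus nonnegativity-preservation to obtain monotonicity---spells out in detail what the paper leaves implicit in the remark that ``$x_i,y_i\geq 0$ ensures that $z\geq 0$ and thus Eq.~(\ref{eqn:order1}) is satisfied.''
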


Theorem \ref{thm:newRIcalc} can be used to prove the following generalized Chebyshev's sum inequality:
\begin{corollary} \label{cor:chebyshevcalc}
Let $(I, \preceq_I, J, \preceq_J, K, \preceq_K, \ast)$ be a tuple in $\cal C$.
Let $a_1\preceq_I a_2\preceq_I\cdots \preceq_I a_n$, and  $b_1\preceq_J b_2\preceq_J \cdots \preceq_I b_n$, 
then 
$$ \sum_i a_i\ast \sum_j b_{j} \preceq_K n\sum_i a_i\ast b_i.$$
\end{corollary}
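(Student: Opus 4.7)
The plan is to mimic the classical derivation of Chebyshev's sum inequality from the rearrangement inequality, but carried out in the partially ordered group $(K, +_K, \preceq_K)$ using the version of rearrangement already established in Theorem \ref{thm:newRIcalc}.

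First, I would introduce the $n$ cyclic permutations $\sigma_k \in S_n$ defined by $\sigma_k(i) = ((i + k - 1) \bmod n) + 1$ for $k = 0, 1, \ldots, n-1$. Applying the first inequality of Theorem \ref{thm:newRIcalc} to each $\sigma_k$ yields
\[
\sum_{i=1}^{n} a_i \ast b_{\sigma_k(i)} \preceq_K \sum_{i=1}^{n} a_i \ast b_i
\qquad\text{for each } k = 0,1,\ldots,n-1.
\]

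Second, I would add these $n$ inequalities. Because $(K, +_K, \preceq_K)$ is a partially ordered Abelian group, the relation $\preceq_K$ is preserved under addition of group elements, so by a trivial induction the sum of the left-hand sides is $\preceq_K$ the sum of the right-hand sides:
\[
\sum_{k=0}^{n-1} \sum_{i=1}^{n} a_i \ast b_{\sigma_k(i)} \;\preceq_K\; n \sum_{i=1}^{n} a_i \ast b_i.
\]

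Third, I would simplify the left-hand side using the distributivity of $\ast$ (condition 2 in Definition \ref{def:cg}). For each fixed $i$, as $k$ ranges over $\{0,\ldots,n-1\}$, the index $\sigma_k(i)$ ranges over all of $\{1,\ldots,n\}$ exactly once, so interchanging the order of summation and pulling $a_i$ out via right-distributivity gives
\[
\sum_{k=0}^{n-1} \sum_{i=1}^{n} a_i \ast b_{\sigma_k(i)}
= \sum_{i=1}^{n} a_i \ast \!\left(\sum_{k=0}^{n-1} b_{\sigma_k(i)}\right)
= \sum_{i=1}^{n} a_i \ast \sum_{j=1}^{n} b_j,
\]
which yields the desired bound. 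The only subtlety is the compatibility of $\preceq_K$ with the summation in step two, but this is immediate from the partially ordered group axiom $x \preceq_K y \Leftrightarrow x +_K z \preceq_K y +_K z$; there is no genuine obstacle beyond that, since all the heavy lifting (the rearrangement direction for $\ast$) has already been done in Theorem \ref{thm:newRIcalc}.
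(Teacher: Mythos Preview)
Your proof is correct and follows essentially the same approach as the paper: both use the $n$ cyclic shifts of $\{1,\ldots,n\}$, apply Theorem~\ref{thm:newRIcalc} to each, sum, and invoke distributivity to collapse the left-hand side to $\sum_i a_i \ast \sum_j b_j$. The only differences are cosmetic (the paper expands via distributivity first and then applies the rearrangement bound, whereas you bound first and expand afterward), and your version is somewhat more explicit about why addition in the partially ordered group preserves $\preceq_K$.
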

\begin{proof}
$$\sum_i a_i \ast \sum_j b_j = \sum_i \sum_j a_i\ast b_j = \sum_i \sum_j a_i \ast b_{\sigma_j(i)} \preceq_K \sum_j \sum_i a_i \ast b_i \preceq_K n \sum_i a_i \ast b_i$$
where $\sigma_j(i) = (i+j \mod n)+1$. 
\end{proof}

Similarly, Theorem  \ref{thm:newRImulti} and Lemma \ref{lem:calc_multi} can be used to prove:
\begin{theorem}  \label{thm:newRImulticalc}
Let $(I, \preceq_I, I, \preceq_I, I, \preceq_I, \ast)$ be a tuple in $\cal C$.
Let $a_{ij}$ be a sequence of elements in $I$ such that for each $i$, $0\preceq_I a_{i1}\preceq_I a_{i2}\preceq_I\cdots \preceq_I a_{in}$. Then
$$\sum_i \Asterisk_j a_{j\sigma_j(i)} \preceq_I \sum_i \Asterisk_j a_{ji}$$ 
for all permutations $\sigma_j\in S_n$.
If in addition $\ast$ is symmetric, then
$$\Asterisk_i \sum_j a_{j\sigma_j(i)} \succeq_I \Asterisk_i \sum_j a_{ji}$$ 
for all permutations $\sigma_j\in S_n$.
\end{theorem}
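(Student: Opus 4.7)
The plan is to instantiate Theorem \ref{thm:newRImulti} twice, once with $f$ the $\ast$-product and $g$ the sum, and once with the roles swapped, using Lemma \ref{lem:calc} in place of Lemma \ref{lem:prodsum} to verify the hypothesis (\ref{eqn:order2multi}). A preliminary reindexing $i\mapsto\sigma_1^{-1}(i)$ reduces each claim to the case $\sigma_1=\mathrm{id}$, which is the form in which Theorem \ref{thm:newRImulti} directly applies.

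For the first inequality I would take $f(x_1,\ldots,x_k)=\Asterisk_j x_j$ and $g(y_1,\ldots,y_n)=\sum_i y_i$. Since $g$ is additive, condition (\ref{eqn:order2multi}) reduces to showing
\[
f_{ml}(x_1,y_1,w)+f_{ml}(x_2,y_2,w)\succeq_I f_{ml}(x_2,y_1,w)+f_{ml}(x_1,y_2,w).
\]
Writing $f_{ml}(x,y,w)=P\ast x\ast Q\ast y\ast R$, with $P,Q,R$ the $\ast$-products of the components of $w$ before $m$, strictly between $m$ and $l$, and after $l$ respectively, distributivity collapses the difference of the two sides to $P\ast(x_2-x_1)\ast Q\ast(y_2-y_1)\ast R$. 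Since each of $P,Q,R$ is $\succeq_I 0$ by iterated nonnegativity preservation, and $x_2-x_1,y_2-y_1\succeq_I 0$ by hypothesis, one more application of nonnegativity preservation gives the desired inequality, and Theorem \ref{thm:newRImulti} then delivers the first claim.

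For the second inequality (assuming $\ast$ symmetric), I would take $f(x_1,\ldots,x_k)=\sum_j x_j$ and $g(y_1,\ldots,y_n)=\Asterisk_i y_i$. Using symmetry of $\ast$, we can rewrite $g_{ij}(\alpha,\beta,z)=Q\ast\alpha\ast\beta$ for $Q$ the $\ast$-product of the remaining components of $z$. With $f_{ml}(x,y,w)=S+x+y$ for $S$ the sum of the remaining entries of $w$, a direct expansion using symmetry of $\ast$ cancels all cross $S$-terms and yields
\[
(S+x_1+y_1)\ast(S+x_2+y_2)-(S+x_2+y_1)\ast(S+x_1+y_2)=-(x_2-x_1)\ast(y_2-y_1),
\]
so the relevant difference equals $-Q\ast(x_2-x_1)\ast(y_2-y_1)\preceq_I 0$. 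Thus the \emph{reverse} of (\ref{eqn:order2multi}) holds, and Theorem \ref{thm:newRImulti} applied with the partial order on its codomain reversed (equivalently, with $g$ replaced by $-g$) yields the $\succeq_I$ conclusion.

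The main obstacle is confirming that the ``context'' element $Q$ is $\succeq_I 0$: this requires every component of the relevant $z$ vector — each an $f$-image of nonnegative $a_{ji}$'s — to be nonnegative so that the nonnegativity-preserving property of $\ast$ propagates through all factors of $Q$. Symmetry of $\ast$ is used essentially both to collect the context factors of $g_{ij}$ into the single form $Q\ast\alpha\ast\beta$ and to produce the clean cancellation of the mixed $S$-terms in the expansion.
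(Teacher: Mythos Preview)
Your proposal is correct and follows essentially the same approach the paper indicates: the paper's ``proof'' is the single sentence ``Similarly, Theorem~\ref{thm:newRImulti} can be used to prove,'' and you carry this out exactly as intended, instantiating Theorem~\ref{thm:newRImulti} with $(f,g)=(\Asterisk,\sum)$ and $(f,g)=(\sum,\Asterisk)$ and verifying (\ref{eqn:order2multi}) via the distributivity/nonnegativity argument underlying Lemma~\ref{lem:calc}. Your explicit observation that the context factors $P,Q,R$ (resp.\ $Q$) are $\succeq_I 0$ only because the relevant $w,z$ are populated by nonnegative $a_{ij}$'s is a point the paper leaves implicit in the statement of Theorem~\ref{thm:newRImulti}, but it is precisely what is needed and used in its proof.
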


Theorem  \ref{thm:variation} implies:
\begin{theorem}  \label{thm:variationcalc}
Let $(I, \preceq_I, I, \preceq_I, K, \preceq_K, \ast)$ be a tuple in $\cal C$ with $\ast$ symmetric.
Let $a_1\preceq_I a_2\preceq_I\cdots \preceq_I a_{2n}$ be a sequence of $2n$ elements of $I$.  
Then 
$$\sum_{i=1}^n \left(a_{i}\ast a_{2n-i+1}\right) \preceq_K \sum_{i=1}^n \left(a_{\sigma(2i-1)}\ast a_{\sigma(2i)}\right) \preceq_K \sum_{i=1}^n \left(a_{2i-1}\ast a_{2i}.\right)$$
for all $\sigma \in S_{2n}$. If in addition $I = K$ and $a_1\succeq_I 0$, then
$$\Asterisk_{i=1}^n \left(a_{2i-1}+a_{2i}\right) \preceq_I  \Asterisk_{i=1}^n \left(a_{\sigma(2i-1)}+a_{\sigma(2i)}\right) \preceq_I \Asterisk_{i=1}^n \left(a_{i}+a_{2n-i+1}\right)$$
for all $\sigma \in S_{2n}$.
\end{theorem}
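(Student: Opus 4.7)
The plan is to derive both inequality chains from Theorem~\ref{thm:variation}, using Lemma~\ref{lem:calc} to verify the swap condition Eq.~(\ref{eqn:order1variation}) in each case.

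For the first chain, apply Theorem~\ref{thm:variation} with $f(x,y) = x\ast y$ and $g(x_1,\ldots,x_n) = \sum_{i=1}^n x_i$, taking $\preceq_c = \preceq_d = \preceq_K$. The symmetry of $\ast$ makes $f$ symmetric, so Eq.~(\ref{eqn:asymmetry}) holds with equality and Eq.~(\ref{eqn:symmetry}) holds, and both conclusions of Theorem~\ref{thm:variation} apply. Monotonicity of each coordinate of $g$ in $\preceq_K$ is just the monotonicity of addition in the partially ordered group $K$. The swap condition Eq.~(\ref{eqn:order1variation}) reduces, after canceling the common $z$ summand, to the first inequality of Lemma~\ref{lem:calc}. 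Combining the two conclusions of Theorem~\ref{thm:variation} yields the claimed two-sided bound in $\preceq_K$.

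For the second chain, with $I = K$ and $a_1 \succeq_I 0$, take $f(x,y) = x + y$ and $g(x_1,\ldots,x_n) = \Asterisk_{i=1}^n x_i$. The complication is that Lemma~\ref{lem:calc}'s second inequality, $(a_1+b_1)\ast(a_2+b_2) \preceq_I (a_2+b_1)\ast(a_1+b_2)$, has the \emph{opposite} sign from Eq.~(\ref{eqn:order1variation}), reflecting the dual behavior of product-of-sums versus sum-of-products. To compensate, apply Theorem~\ref{thm:variation} with the reversed partial order $\succeq_I$ in place of each of $\preceq$, $\preceq_c$, and $\preceq_d$. Monotonicity of $g = \Asterisk$ is preserved under simultaneous reversal of source and target, and the underlying $\preceq_I$-monotonicity itself holds because $a_1 \succeq_I 0$ places every $a_i + a_j$ in the nonnegative cone of $I$ and $\ast$ preserves nonnegativity. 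Under the reversal, Eq.~(\ref{eqn:order1variation}) becomes exactly Lemma~\ref{lem:calc}'s second inequality, multiplied by the nonnegative product of the remaining arguments of $g$. Applying Theorem~\ref{thm:variation} to the reindexed sequence $\alpha_i := a_{2n-i+1}$ (which is sorted in $\succeq_I$) and translating back via the symmetry of $+$ and the commutativity of the iterated product $\Asterisk$, the opposite pairing $(a_i, a_{2n-i+1})$ becomes the maximum and the similar pairing $(a_{2i-1}, a_{2i})$ the minimum under $\preceq_I$, which is the stated inequality.

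The main obstacle is the order-reversal bookkeeping in the second chain: verifying that the monotonicity and swap hypotheses of Theorem~\ref{thm:variation} still hold under the simultaneous reversal, and tracking how the maximum and minimum roles in its conclusion interchange. Once this is done, the result is an immediate consequence of Theorem~\ref{thm:variation}.
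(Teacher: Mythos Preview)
Your proposal is correct and follows the paper's own route: the paper states only that Theorem~\ref{thm:variationcalc} is implied by Theorem~\ref{thm:variation}, and you carry out exactly that derivation, using Lemma~\ref{lem:calc} to verify the swap hypothesis Eq.~(\ref{eqn:order1variation}) in each case. Your order-reversal device for the product-of-sums chain is equivalent to the paper's standing convention (visible in Table~\ref{tabl:fg} and the discussion after Lemma~\ref{lem:prodsum}) of taking $g$ to be the \emph{negative} of the iterated product; had you reversed only $\preceq_c$ and $\preceq_d$ (equivalently, used $g=-\Asterisk$) you could have applied Theorem~\ref{thm:variation} directly to the original sequence $a_1\preceq_I\cdots\preceq_I a_{2n}$ and avoided the reindexing by $\alpha_i=a_{2n-i+1}$, but the argument is the same either way.
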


Theorem \ref{thm:multiple-seq-variant} and Lemma \ref{lem:calc_multi} imply:
\begin{theorem}
\label{thm:multiple-seq-variant-calc}
Let $(I, \preceq_I, I, \preceq_I, I, \preceq_I, \ast)$ be a tuple in $\cal C$.
Let $a_{i}$ be a sequence of $kn$ elements in $I$  such that  $0\preceq_I a_{1}\preceq_I a_{2}\preceq_I\cdots \preceq_I a_{kn}$. Let $\{b_i\}$ be a permutation of $\{a_i\}$. Then
$$\sum_{i=1}^n \Asterisk_{j=1}^k b_{(i-1)k+j} \preceq_I \sum_{i=1}^n \Asterisk_{j=1}^k a_{(i-1)k+j} $$ 
If in addition $\ast$ is symmetric, then
$$\Asterisk_{i=1}^n \sum_{j=1}^k b_{(i-1)k+j} \succeq_I \Asterisk_{i=1}^n \sum_{j=1}^k a_{(i-1)k+j} $$ 
\end{theorem}

Similarly, Theorem \ref{thm:variationcalc} implies the following variation of the Chebyshev's sum inequality.
\begin{corollary}  \label{cor:variationcalcchebyshev}
Let $(I, \preceq_I, I, \preceq_I, K, \preceq_K, \ast)$ be a tuple in $\cal C$ with $\ast$ symmetric.
Let $a_1\preceq_I a_2\preceq_I\cdots \preceq_I a_{2n}$ be a sequence of $2n$ elements of $I$.  
Then 
$$ \sum_{i=1}^n a_{\sigma(i)}\ast \sum_{j=n+1}^{2n} a_{\sigma(j)} \preceq_K n\sum_{i=1}^n a_{2i-1}\ast a_{2i}.$$
for all $\sigma \in S_{2n}$.
\end{corollary}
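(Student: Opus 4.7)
The plan is to mimic the proof of Corollary \ref{cor:chebyshevcalc}, replacing Theorem \ref{thm:newRIcalc} with its variant Theorem \ref{thm:variationcalc}. First, by distributivity of $\ast$ over $+_I$, the left-hand side expands into the double sum
$$\sum_{i=1}^n \sum_{j=n+1}^{2n} a_{\sigma(i)}\ast a_{\sigma(j)},$$
which contains $n^2$ cross-products that may be indexed by the cells of an $n \times n$ grid with rows labeled by $\{\sigma(1),\ldots,\sigma(n)\}$ and columns labeled by $\{\sigma(n+1),\ldots,\sigma(2n)\}$. The strategy is to partition this grid into $n$ cyclic ``diagonal'' perfect matchings, each of which corresponds to a pairing of all $2n$ indices to which Theorem \ref{thm:variationcalc} can be applied.

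Concretely, for each shift $k = 0, 1, \ldots, n-1$, I would define the permutation $\tau_k$ of $\{1,\ldots,2n\}$ by $\tau_k(2i-1) = \sigma(i)$ and $\tau_k(2i) = \sigma\!\left(n+1+((i+k-1) \bmod n)\right)$ for $i = 1,\ldots,n$. A standard Latin-square verification shows that as $k$ ranges over $\{0,\ldots,n-1\}$, every cross-product $a_{\sigma(i)} \ast a_{\sigma(j)}$ with $1 \leq i \leq n < j \leq 2n$ appears exactly once in some $\sum_{i=1}^n a_{\tau_k(2i-1)}\ast a_{\tau_k(2i)}$, so the $n$ diagonal sums exhaust the full double sum.

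By Theorem \ref{thm:variationcalc} (invoking symmetry of $\ast$), for each $k$,
$$\sum_{i=1}^n a_{\tau_k(2i-1)}\ast a_{\tau_k(2i)} \preceq_K \sum_{i=1}^n a_{2i-1}\ast a_{2i}.$$
Summing these $n$ inequalities and using the compatibility of $\preceq_K$ with $+_K$ in the partially ordered group $K$ yields
$$\sum_{i=1}^n a_{\sigma(i)}\ast \sum_{j=n+1}^{2n} a_{\sigma(j)} = \sum_{k=0}^{n-1}\sum_{i=1}^n a_{\tau_k(2i-1)}\ast a_{\tau_k(2i)} \preceq_K n\sum_{i=1}^n a_{2i-1}\ast a_{2i},$$
as required. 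No real analytic obstacle arises: all substantive content is packaged in Theorem \ref{thm:variationcalc}, and the only step requiring care is the combinatorial verification that the cyclic diagonals tile the $n \times n$ cross-product grid, which is exactly the device already used in Corollary \ref{cor:chebyshevcalc}.
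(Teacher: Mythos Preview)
Your proof is correct and follows essentially the same approach as the paper's: expand by distributivity, split the $n^2$ cross-products into $n$ cyclic diagonals (the paper indexes them by $j=n+1,\ldots,2n$ via $\mu_j(i)=(i+j\bmod n)+n+1$, you by $k=0,\ldots,n-1$ via your $\tau_k$), apply Theorem~\ref{thm:variationcalc} to each diagonal, and sum. The only cosmetic difference is the choice of indexing for the diagonal decomposition.
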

\begin{proof}
\begin{multline*}\sum_{i=1}^n a_{\sigma(i)}\ast \sum_{j=n+1}^{2n} a_{\sigma(j)} = \sum_{i=1}^n \sum_{j=n+1}^{2n} a_{\sigma(i)}\ast a_{\sigma(j)} \\ = \sum_{i=1}^n \sum_{j=n+1}^{2n} a_{\sigma(i)} \ast a_{\sigma(\mu_j(i))} \preceq_K  \sum_{j=n+1}^{2n}  \sum_{i=1}^n a_{2i-1} \ast a_{2i} \preceq_K n \sum_{i=1}^{n} a_{2i-1} \ast a_{2i}
\end{multline*}
where $\mu_j(i) = (i+j\mod n) + n+1$. 
\end{proof}

Theorem \ref{thm:multiple-seq-variant} implies:
\begin{corollary} \label{cor:variation-multi-calc}
Let $(I, \preceq_I, I, \preceq_I, I, \preceq_I, \ast)$ be a tuple in $\cal C$ with $\ast$ symmetric.
Let $0 \preceq_I a_1\preceq_I a_2\preceq_I\cdots \preceq_I a_{kn}$ be a sequence of $kn$ elements of $I$.  
Then 
$$ \sum_{j=1}^n \Asterisk_{i=1}^k a_{\sigma((j-1)k+i)} \preceq_I \sum_{j=1}^n \Asterisk_{i=1}^k a_{(j-1)k+i}.$$
and
$$\Asterisk_{j=1}^n \sum_{i=1}^k a_{(j-1)k+i} \preceq_I \Asterisk_{j=1}^n \sum_{i=1}^k a_{\sigma((j-1)k+i)} $$
for all $\sigma \in S_{kn}$.
\end{corollary}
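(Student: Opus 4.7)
Both inequalities should drop out of Theorem \ref{thm:multiple-seq-variant} after two separate choices of $(f,g)$; the substance of the argument is just the verification of the pairwise symmetry of $f$ and the exchange inequality (\ref{eqn:order1variation}) in each case.

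For the first inequality I would take $g(x_1,\ldots,x_n) = \sum_{i=1}^n x_i$ and $f(x_1,\ldots,x_k) = \Asterisk_{i=1}^k x_i$, with $\preceq_d = \preceq_I$. Pairwise symmetry of $f$ is immediate from symmetry of $\ast$. Since $g$ is linear, the $z$ argument in (\ref{eqn:order1variation}) cancels, and it suffices to show
\[
f_{ml}(x_1,y_1,w) + f_{ml}(x_2,y_2,w) \succeq_I f_{ml}(x_2,y_1,w) + f_{ml}(x_1,y_2,w).
\]
Using symmetry and distributivity of $\ast$ one factors the common product $W = \Asterisk_{r\neq m,l} w_r$ out of each of the four terms, collapsing the difference of the two sides to $((x_2 - x_1)\ast(y_2 - y_1))\ast W$. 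All three factors are $\succeq_I 0$ (the first two by the hypotheses $x_1 \preceq_I x_2$, $y_1 \preceq_I y_2$, and $W$ by repeated nonnegativity-preservation applied to the entries of the sorted nonnegative sequence $a_i$), so one more invocation of nonnegativity-preservation of $\ast$ finishes the verification.

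For the second inequality I would take $f(x_1,\ldots,x_k) = \sum_{i=1}^k x_i$ and $g(x_1,\ldots,x_n) = \Asterisk_{i=1}^n x_i$, and declare $\preceq_d$ to be the reverse of $\preceq_I$ (which is exactly what is needed so that the conclusion of Theorem \ref{thm:multiple-seq-variant} comes out in the direction stated). Pairwise symmetry of $f$ is trivial. For (\ref{eqn:order1variation}), setting $S = \sum_r w_r$ and $Z = \Asterisk_{r\neq i,j} z_r$ (both $\succeq_I 0$), I would expand the two ``two-by-two'' sub-products $(x_1 + y_1 + S)\ast(x_2 + y_2 + S)$ and $(x_2 + y_1 + S)\ast(x_1 + y_2 + S)$ using distributivity and symmetry of $\ast$. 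All terms involving $S$ and the $S\ast S$ term cancel pairwise between the two sides, and the residual collapses to the familiar $(x_2 - x_1)\ast(y_2 - y_1) \succeq_I 0$ of Lemma \ref{lem:calc}. Multiplying through by $Z$ and appealing to nonnegativity-preservation completes the verification.

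Once both hypotheses are verified, Theorem \ref{thm:multiple-seq-variant} yields the two displayed inequalities for every $\sigma \in S_{kn}$. The only real obstacle is the bookkeeping in the second expansion: checking that the cross terms involving the common sum $S$ cancel exactly, so that the residue is precisely the $(x_2 - x_1)\ast(y_2 - y_1)$ factor. Both verifications essentially re-derive Lemma \ref{lem:calc} in the presence of an extra ``spectator'' factor $W$ or $Z$, and nonnegativity-preservation of $\ast$ is what allows those spectators to be absorbed without changing the direction of the inequality.
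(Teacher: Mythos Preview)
Your proposal is correct and matches the paper's intended derivation: the paper states the corollary as an immediate consequence of Theorem~\ref{thm:multiple-seq-variant}, obtained exactly by the two choices of $(f,g)$ you describe, with Lemma~\ref{lem:calc} supplying the core two-term exchange inequality. Your use of the reversed order $\preceq_d$ in the second case is equivalent to the paper's device (used in the real-number analogue, Corollary~\ref{cor:variation-prodsum-multiple}) of taking $g$ to be the negative of the product; both maneuvers simply flip the direction of the conclusion of Theorem~\ref{thm:multiple-seq-variant}.
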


An analogue of Theorem \ref{thm:circular} is the following:

\begin{theorem}
Let $(I, \preceq_I, I, \preceq_I, I, \preceq_I, \ast)$ be a tuple in $\cal C$ with $\ast$ symmetric. Let $a_1\preceq_I a_2\preceq_I\cdots \preceq_I a_n$ and $V(\sigma) = a_{\sigma(1)}\ast a_{\sigma(2)} +_I a_{\sigma(2)}\ast a_{\sigma(3)}  +_I \cdots +_I a_{\sigma(n)}\ast a_{\sigma(1)} $, where $\sigma\in S_n$.
Then 
$$ V(\sigma_{m_1}) \preceq_I V(\sigma) \preceq_I V(\sigma_{m_2})$$
for all permutations $\sigma\in S_n$ where $\sigma_{m_1}$ and $\sigma_{m_2}$ are as defined in Section \ref{sec:circular}.
If in addition $a_1 \succeq_I 0$ and $W(\sigma) = \left(a_{\sigma(1)}+_{I} a_{\sigma(2)}\right)\ast \left(a_{\sigma(2)}+_I a_{\sigma(3)}\right)\ast \cdots \ast \left(a_{\sigma(n)}+_I a_{\sigma(1)}\right) $, then $W(\sigma_{m_2}) \preceq_I W(\sigma) \preceq_I W(\sigma_{m_1})$.
\end{theorem}

\subsection{Ordered inner product spaces}
Consider the case where $I=J$ is an ordered vector space $I$ with a real-valued inner product $\langle\cdot,\cdot\rangle:I\times I \rightarrow \mathbb{R}$ with corresponding partial order $\succeq$ such that
the following is true:
$$x,y \succeq 0 \Rightarrow \langle x, y\rangle \geq 0.$$ Examples of such ordered inner product spaces include $\mathbb{R}^n$, $L_2$ and $l_2$ spaces and Hermitian matrices\footnote{where the partial order is the Loewner partial order and the inner product is the Frobenius inner product $\langle A, B\rangle = tr(AB)$.}.
Then Lemma \ref{lem:calc} becomes:

\begin{lemma}\label{lem:inner}
If  $a_1 \preceq a_2$ and $b_1 \preceq b_2$, then
$$ \langle a_1,b_1\rangle + \langle a_2,b_2\rangle \geq \langle a_1,b_2\rangle + \langle a_2, b_1\rangle$$
and
$$ \langle a_1+b_1, a_2+b_2\rangle \leq \langle a_1+b_2, a_2+b_1\rangle$$
\end{lemma}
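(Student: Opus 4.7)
The plan is to mimic the pattern already used in Lemma \ref{lem:prodsum} and Lemma \ref{lem:calc}: show that both displayed inequalities reduce to the single statement
\[ \langle a_2 - a_1,\; b_2 - b_1\rangle \;\geq\; 0, \]
which then follows immediately from the hypothesis $x,y\succeq 0 \Rightarrow \langle x,y\rangle\geq 0$ together with $a_2 - a_1 \succeq 0$ and $b_2 - b_1 \succeq 0$.

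For the first inequality, I would move the right-hand side to the left and use bilinearity of $\langle\cdot,\cdot\rangle$ to write
\[ \langle a_1,b_1\rangle + \langle a_2,b_2\rangle - \langle a_1,b_2\rangle - \langle a_2,b_1\rangle = \langle a_2 - a_1,\; b_2 - b_1\rangle, \]
which is nonnegative by the assumed compatibility of the order and the inner product. For the second inequality, I would expand
\[ \langle a_1+b_2,\; a_2+b_1\rangle - \langle a_1+b_1,\; a_2+b_2\rangle \]
using bilinearity; the $\langle a_1,a_2\rangle$ and $\langle b_1,b_2\rangle$ terms cancel (the latter using symmetry of the inner product, so $\langle b_2,b_1\rangle=\langle b_1,b_2\rangle$), and what remains collapses, again via bilinearity, to $\langle a_2 - a_1,\;b_2 - b_1\rangle \geq 0$.

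There is essentially no obstacle here: the argument is purely formal manipulation of the bilinear form, and the ``heavy lifting'' is already done by the hypothesis on the inner product. The only minor care is to note that for the second inequality one does use symmetry of $\langle\cdot,\cdot\rangle$ in the cancellation step, which is fine in the stated setting since a real-valued inner product is symmetric; this matches the ``symmetric $\ast$'' requirement that appeared in the second half of Lemma \ref{lem:calc}.
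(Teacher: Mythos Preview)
Your proposal is correct and follows essentially the same approach as the paper: the paper presents Lemma~\ref{lem:inner} as the specialization of Lemma~\ref{lem:calc} to the inner-product setting, and the proof of Lemma~\ref{lem:calc} is exactly the observation that both inequalities rearrange to $(a_2-a_1)\ast(b_2-b_1)\succeq_K 0$. Your explicit use of bilinearity and symmetry of the inner product to carry out that rearrangement is precisely what the one-line proof of Lemma~\ref{lem:calc} is abbreviating.
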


Theorem \ref{thm:newRIcalc} then becomes

\begin{theorem}
Let $a_1\preceq a_2\preceq \cdots \preceq a_n$, and $b_1\preceq b_2\preceq\cdots \preceq b_n$.
Then 
$$ \sum_i \langle a_i,b_{n-i+1}\rangle \leq \sum_i \langle a_i,b_{\sigma(i)}\rangle \leq \sum_i \langle a_i,b_i\rangle$$
for all $\sigma\in S_n$.
\end{theorem}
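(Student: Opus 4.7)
The plan is to derive this theorem as an immediate specialization of Theorem \ref{thm:newRIcalc}. The substantive work is just to exhibit the ordered inner product space as part of a tuple in $\cal C$. Specifically, I take $I = J$ to be the given ordered inner product space (with its vector addition $+_I$ and partial order $\preceq$), $K = \mathbb{R}$ with standard addition and $\leq$, and $\ast = \langle \cdot, \cdot \rangle : I \times I \to \mathbb{R}$.

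To confirm membership in $\cal C$, I verify the three conditions of Definition \ref{def:cg}. The partially ordered Abelian group conditions on $I$ (an ordered vector space is in particular an ordered Abelian group under vector addition) and on $(\mathbb{R}, +, \leq)$ are standard. Distributivity of $\ast$ over $+_I$ in each argument is just the bilinearity of the inner product. Finally, the nonnegativity-preservation condition, $x,y \succeq 0 \Rightarrow \langle x, y\rangle \geq 0$, is exactly the hypothesis imposed in defining an ordered inner product space in this subsection.

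Having placed the setting inside $\cal C$, the sum-of-products half of Theorem \ref{thm:newRIcalc} instantiated with $\ast = \langle\cdot,\cdot\rangle$ and $K = \mathbb{R}$ yields the claim verbatim. No obstacles are anticipated: symmetry of the inner product is not required for this half of the rearrangement inequality (it is only needed for the dual product-of-sums form, which is not asserted here), so the argument does not need to invoke any commutativity of $\ast$. As a sanity check, one could equivalently prove the result from scratch by feeding Lemma \ref{lem:inner} (in place of Lemma \ref{lem:prodsum}) into the permutahedron-ordering argument used in the proof of Theorem \ref{thm:newRI}, but going through Theorem \ref{thm:newRIcalc} keeps the derivation a one-line corollary of the general framework already developed.
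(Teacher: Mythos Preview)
Your proposal is correct and matches the paper's approach: the paper derives this theorem as the specialization of Theorem \ref{thm:newRIcalc} to ordered inner product spaces, which is exactly what you do by verifying that $(I,+_I,\preceq,I,+_I,\preceq,\mathbb{R},+,\leq,\langle\cdot,\cdot\rangle)\in\cal C$ and invoking the sum-of-products half of that theorem.
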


\subsection{Hermitian matrices}
Let us now choose $I$ and $J$ to be the set of Hermitian matrices with the Loewner partial order, i.e. $A\succeq_L B$ if $A-B$ is positive semidefinite. Since  the product of two positive semidefinite Hermitian matrices that commutes is positive semidefinite, Lemma \ref{lem:calc} implies:

\begin{lemma}\label{lem:hermitian}
Let $A_1$, $A_2$, $B_1$, $B_2$ be Hermitian matrices of the same order such that $A_i$ commutes with $B_j$ for all $i,j$.
If  $A_1 \preceq_L A_2$ and $B_1 \preceq_L B_2$, then
$$ A_1B_1 + A_2B_2 \succeq_L A_1B_2 + A_2B_1$$
If in addition $A_1$ commutes with $A_2$, then
$$ (A_1+B_1)(A_2+B_2) \preceq_L (A_2+B_1)(A_1+B_2) $$
\end{lemma}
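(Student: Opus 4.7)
The plan is to reduce both claims to the single positivity statement $(A_2 - A_1)(B_2 - B_1) \succeq_L 0$, exactly as in the proof of Lemma \ref{lem:calc}, and then invoke the well-known fact that the product of two commuting positive semidefinite Hermitian matrices is positive semidefinite. Since $A_2 - A_1 \succeq_L 0$ and $B_2 - B_1 \succeq_L 0$ by hypothesis, and since every $A_i$ commutes with every $B_j$, the matrices $A_2 - A_1$ and $B_2 - B_1$ commute, so their product is positive semidefinite.

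For the first inequality, I would simply expand
\[
(A_2 - A_1)(B_2 - B_1) = A_2 B_2 - A_2 B_1 - A_1 B_2 + A_1 B_1,
\]
so the positivity of this product is literally the statement $A_1 B_1 + A_2 B_2 \succeq_L A_1 B_2 + A_2 B_1$. Note that for this direction we do not need $A_1$ and $A_2$ to commute; only the cross-commutativity $A_i B_j = B_j A_i$ is used.

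For the second inequality, I would expand the difference of the two products:
\[
(A_2 + B_1)(A_1 + B_2) - (A_1 + B_1)(A_2 + B_2) = (A_2 A_1 - A_1 A_2) + (A_2 - A_1)B_2 + B_1(A_1 - A_2).
\]
Using the added hypothesis that $A_1$ commutes with $A_2$, the first bracket vanishes; using the cross-commutativity, $B_1(A_2 - A_1) = (A_2 - A_1)B_1$, and the remaining terms collapse to $(A_2 - A_1)(B_2 - B_1)$, which is positive semidefinite as above.

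The main obstacle is not really an obstacle but a bookkeeping point: one must be careful that two matrices which each commute individually with $B_1$ and $B_2$ need not commute with each other, which is precisely why the extra hypothesis ``$A_1$ commutes with $A_2$'' appears for the second inequality and not for the first. Once the algebraic rearrangement above is carried out, the proof is complete.
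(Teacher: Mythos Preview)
Your proposal is correct and matches the paper's own argument: the paper derives Lemma~\ref{lem:hermitian} directly from Lemma~\ref{lem:calc} together with the fact that the product of two commuting positive semidefinite Hermitian matrices is positive semidefinite, which is exactly the reduction to $(A_2-A_1)(B_2-B_1)\succeq_L 0$ that you carry out. Your explicit expansion for the second inequality, showing where the extra hypothesis $A_1A_2=A_2A_1$ is needed (to kill the commutator term) while the $B_1B_2$ terms cancel automatically, is a welcome clarification of a detail the paper leaves implicit.
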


This along with Theorem \ref{thm:newRIcalc} can be used to prove the following result which was also proved in Ref. \cite{Tie2011}.

\begin{theorem}
Let $A_1\preceq_LA_2\preceq_L\cdots \preceq_L A_n$, and  $B_1\preceq_LB_2\preceq_L\cdots \preceq_L B_n$ be Hermitian matrices of the same order such that $A_i$ commutes with $B_j$ for all $i,j$.
Then 
$$ \sum_i A_iB_{n-i+1} \preceq_L \sum_i A_iB_{\sigma(i)} \preceq_L \sum_i A_iB_i$$
for all $\sigma\in S_n$.
\end{theorem}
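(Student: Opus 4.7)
The plan is to adapt the adjacent-transposition argument from the proof of Theorem \ref{thm:newRI} to the Loewner setting, using Lemma \ref{lem:hermitian} as the elementary swap inequality. The hypothesis that $A_i$ commutes with $B_j$ for all $i,j$ is precisely what is needed to invoke that lemma on any pair of $A$'s and any pair of $B$'s drawn from our two chains.

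First I would record the pairwise swap: for any $i<j$ and $k<l$, the matrices $A_j-A_i$ and $B_l-B_k$ are both positive semidefinite, and they commute because $A_mB_r=B_rA_m$ for all $m,r$. Hence their product is positive semidefinite, which after expansion is
\[
A_i B_k + A_j B_l \succeq_L A_i B_l + A_j B_k.
\]
Then I would run the Vince-style permutahedron argument from the proof of Theorem \ref{thm:newRI}: for $\sigma\in S_n$ set $S^{\sigma} = \sum_i A_i B_{\sigma(i)}$ and order $S_n$ by the transitive closure of adjacent-inversion removal. The displayed swap, applied at positions $(i,i+1)$ whenever $\sigma(i)>\sigma(i+1)$, shows that removing an adjacent inversion weakly increases $S^{\sigma}$ under $\preceq_L$. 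Since the identity is the unique top and the reversal is the unique bottom of the permutahedron, transitivity of $\preceq_L$ yields
\[
\sum_i A_i B_{n-i+1} \;\preceq_L\; \sum_i A_i B_{\sigma(i)} \;\preceq_L\; \sum_i A_i B_i.
\]

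The only reason the result is not an immediate corollary of Theorem \ref{thm:newRIcalc} is that cross-commutation alone does not place $\{A_i\}\cup\{B_j\}$ inside a tuple of $\cal C$ under matrix multiplication: products such as $A_iA_j$ need not even be Hermitian, so the formal hypotheses of Definition \ref{def:cg} fail. The key observation making the proof go through anyway is that cross-commutation is exactly what the single pairwise swap needs, and that swap is the only step in the proof of Theorem \ref{thm:newRI} where any commutativity is invoked. Thus the Vince argument transfers verbatim with Lemma \ref{lem:hermitian} replacing the scalar Lemma \ref{lem:prodsum}, and I anticipate no genuine technical obstacle beyond this bookkeeping remark.
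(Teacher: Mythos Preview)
Your proof is correct and is essentially the paper's own argument: Lemma \ref{lem:hermitian} supplies the pairwise swap inequality, and the permutahedron argument of Theorem \ref{thm:newRI} (which is what underlies Theorem \ref{thm:newRIcalc}) finishes the job. Your closing caveat is slightly off, however: the result \emph{is} an immediate corollary of Theorem \ref{thm:newRIcalc}, because Definition \ref{def:cg} allows $I\neq J$---take $I$ to be the additive span of the $A_i$, $J$ that of the $B_j$, and $K$ all Hermitian matrices; cross-commutation then makes matrix multiplication a well-defined, distributive, nonnegativity-preserving map $I\times J\to K$, and products like $A_iA_j$ never enter.
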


Similarly

\begin{theorem}
Let $0\preceq_L A_1\preceq_LA_2\preceq_L\cdots \preceq_L A_n$, and $0\preceq_L B_1\preceq_LB_2\preceq_L\cdots \preceq_L B_n$ be Hermitian matrices of the same order such that $A_i$ and $B_i$ commutes with $A_j$ and with $B_j$ for all $i,j$.
Then 
$$ \prod_i \left(A_i+B_{n-i+1}\right) \succeq_L \prod_i\left( A_i+B_{\sigma(i)}\right) \succeq_L \prod_i \left(A_i+B_i\right)$$
for all $\sigma\in S_n$.
\end{theorem}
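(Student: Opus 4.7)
The plan is to recognize this statement as a direct instance of the second half of Theorem \ref{thm:newRIcalc}, applied to an appropriate tuple in $\cal C$ built from the given commuting family. The substantive content is therefore the verification that the commutativity hypothesis on $\{A_i,B_j\}$ produces a valid member of $\cal C$ in which matrix multiplication is symmetric.

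First, I would let $\cal A$ be the real vector space of Hermitian matrices generated by $\{A_1,\ldots,A_n,B_1,\ldots,B_n,\mathrm{Id}\}$ under addition and matrix multiplication. Because every generator commutes with every other generator and all are Hermitian, the generators are simultaneously unitarily diagonalizable, so $\cal A$ consists of Hermitian matrices that are jointly diagonal in a common eigenbasis. In particular $\cal A$ is closed under matrix multiplication, and multiplication restricted to $\cal A$ is commutative. I would then set $I=J=K=\cal A$ with the Loewner order $\preceq_L$ and ordinary matrix multiplication as $\ast$.

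Next I would verify the three conditions in Definition \ref{def:cg}. That $(\cal A,+,\preceq_L)$ is a partially ordered Abelian group and that multiplication is distributive are immediate. Nonnegativity preservation — the product of two commuting Loewner-nonnegative Hermitian matrices is Loewner-nonnegative — follows by working in the common eigenbasis, where both factors become diagonal matrices with nonnegative entries, so their product is again diagonal with nonnegative entries. Thus $(I,+,\preceq_L,\ldots,\ast)\in\cal C$ and $\ast$ is symmetric.

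Finally, the chains $0\preceq_L A_1\preceq_L\cdots\preceq_L A_n$ and $0\preceq_L B_1\preceq_L\cdots\preceq_L B_n$ lie in $I$ and satisfy the hypotheses of the ``if in addition'' clause of Theorem \ref{thm:newRIcalc}. Applying that clause yields
\[
\Asterisk_i(A_i+B_{n-i+1})\;\succeq_L\;\Asterisk_i(A_i+B_{\sigma(i)})\;\succeq_L\;\Asterisk_i(A_i+B_i)
\]
for every $\sigma\in S_n$, which is exactly the stated inequality (the iterated $\ast$ is here just the ordered matrix product, but the order is irrelevant by commutativity). The only step that requires any real care is the nonnegativity-preservation check, since without commutativity the product of two PSD matrices need not be Hermitian, let alone PSD; the commuting hypothesis in the theorem statement is precisely what rescues this, and it is also what ensures that $\ast$ is symmetric so that the product-of-sums version of the conclusion in Theorem \ref{thm:newRIcalc} is available.
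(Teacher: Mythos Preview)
Your proposal is correct and follows essentially the same route as the paper: the paper lists ``commuting Hermitian matrices with the Loewner order and matrix multiplication'' in Table~\ref{tbl:calc} as a member of $\cal C$ with symmetric $\ast$, and then obtains this theorem by the ``Similarly'' following the sum-of-products version, i.e.\ as a direct application of the second half of Theorem~\ref{thm:newRIcalc}. Your explicit construction of the commutative $*$-algebra $\cal A$ generated by the $A_i,B_j$ and the simultaneous-diagonalization argument for nonnegativity preservation make rigorous exactly what the paper's table entry asserts, so the two arguments coincide.
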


Similarly, Theorem  \ref{thm:newRImulticalc} can be used to prove:
\begin{theorem} \label{thm:hermitianmulti}
Let $A_{ij}$ be a sequence of positive semidefinite Hermitian matrices of the same order for $1\leq i\leq k$, $1\leq j\leq n$ such that for each $i$, $A_{i1}\preceq_L A_{i2}\preceq_L\cdots \preceq_L A_{in}$ and $A_{ij}$ commutes with $A_{ml}$ for all $i\neq m$. Then
$$\sum_i \prod_j A_{j\sigma_j(i)} \preceq_L \sum_i \prod_j A_{ji}$$ 
for all permutations $\sigma_j\in S_n$.
If in addition $A_{ij}$ commutes with $A_{ml}$ for all $i,j,m,l$, then
$$\prod_i \sum_j A_{j\sigma_j(i)} \succeq_L \prod_i \sum_j A_{ji}$$ 
for all permutations $\sigma_j\in S_n$.
\end{theorem}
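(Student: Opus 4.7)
My plan is to invoke Theorem~\ref{thm:newRImulticalc} (more precisely, its underlying engine Theorem~\ref{thm:newRImulti}) with all the relevant domain and codomain spaces taken to be the set of Hermitian matrices of the given order, equipped with the Loewner partial order $\preceq_L$, and with $\ast$ the matrix product. The two inequalities correspond to the two natural choices $(f,g) = (\text{product},\text{sum})$ and $(f,g) = (\text{sum},\text{product})$ respectively. What must be checked is the hypothesis Eq.~(\ref{eqn:order2multi}) in this setting; both verifications will reduce to Lemma~\ref{lem:hermitian} once the relevant commutativity is in hand.

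For the first inequality I will fix indices $m < l$, pick $x_1 \preceq_L x_2$ from row $m$ and $y_1 \preceq_L y_2$ from row $l$, and let the remaining $k-2$ slots be filled by an arbitrary tuple $w$ of Hermitian matrices coming from the other rows. Writing the $k$-fold product schematically as $P x Q y R$, where $P$, $Q$, $R$ collect the entries of $w$ preceding position $m$, interleaving $m$ and $l$, and following $l$, the hypothesis that $A_{ij}$ commutes with $A_{ml}$ for $i\neq m$ forces $P$, $Q$, $R$, $x_1$, $x_2$, $y_1$, $y_2$ to be a pairwise commuting family of psd Hermitian matrices. Hence the difference appearing in Eq.~(\ref{eqn:order2multi}) collapses to $PQR(x_2-x_1)(y_2-y_1)$, a product of pairwise commuting psd Hermitian matrices, which is psd. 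Feeding this into the proof of Theorem~\ref{thm:newRImulti} yields the first claim.

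For the second inequality I will use the stronger hypothesis of full pairwise commutativity. This lets me take $g$ to be the $n$-fold product (or its negation, to match the direction of inequality in Theorem~\ref{thm:newRImulti}) and $f$ to be the sum, and it ensures that $\prod_i \sum_j A_{j\sigma_j(i)}$ is a product of commuting Hermitian matrices, hence itself Hermitian. Verifying Eq.~(\ref{eqn:order2multi}) this time reduces, after expanding $(x_2+y_1)(x_1+y_2)-(x_1+y_1)(x_2+y_2)$ and collecting terms, to the analogous psd inequality $z(x_2-x_1)(y_2-y_1)\succeq_L 0$, where $z$ is now the product of the remaining $n-2$ sums, which is Hermitian and commutes with everything by the full commutativity hypothesis.

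The main obstacle is conceptual rather than computational: the tuple consisting of Hermitian matrices under addition, the Loewner order, and matrix multiplication does not literally belong to $\mathcal{C}$, since the product of two Hermitian matrices is Hermitian (let alone nonnegativity-preserving) only when they commute. The commutativity hypotheses in the statement are exactly what ensures that every product that appears, both in the inequalities being proved and in the intermediate steps of verifying Eq.~(\ref{eqn:order2multi}), is formed from pairwise commuting psd Hermitian matrices and therefore stays inside the cone of psd Hermitian matrices. Tracking which level of commutativity is needed for which inequality is the only nonroutine part of the argument.
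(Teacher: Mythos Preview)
Your proposal is correct and matches the paper's approach, which simply invokes Theorem~\ref{thm:newRImulticalc} (and hence Theorem~\ref{thm:newRImulti}); you are just more explicit about verifying Eq.~(\ref{eqn:order2multi}) directly, since, as you rightly observe, the matrix product on Hermitian matrices does not literally place us inside $\mathcal{C}$ without a commutativity assumption. One minor overclaim: in the first part $x_1,x_2$ both come from row~$m$ and need not commute with each other (likewise $y_1,y_2$), but this is harmless---your factorization $PQR(x_2-x_1)(y_2-y_1)$ and its positive semidefiniteness only require the five displayed factors $P$, $Q$, $R$, $(x_2-x_1)$, $(y_2-y_1)$ to pairwise commute, which they do under the stated hypothesis.
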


Theorem  \ref{thm:variationcalc} implies:
\begin{theorem} \label{them:variation-sumprod-hermitian}
Let $A_1\preceq_LA_2\preceq_L\cdots \preceq_L A_{2n}$ be a sequence of $2n$ commuting Hermitian matrices.  
Then 
$$\sum_{i=1}^n A_{i}A_{2n-i+1} \preceq_L  \sum_{i=1}^n A_{\sigma(2i-1)}A_{\sigma(2i)} \preceq_L \sum_{i=1}^n A_{2i-1}A_{2i}.$$
for all $\sigma \in S_{2n}$. If in addition $A_1\succeq_L 0$, then
$$\prod_{i=1}^n \left(A_{2i-1}+A_{2i}\right) \preceq_L  \prod_{i=1}^n \left(A_{\sigma(2i-1)}+A_{\sigma(2i)}\right) \preceq_L \prod_{i=1}^n \left(A_{i}+A_{2n-i+1}\right)$$
for all $\sigma \in S_{2n}$.
\end{theorem}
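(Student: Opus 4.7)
The plan is to recognize this theorem as a direct instantiation of Theorem \ref{thm:variationcalc}, applied to the tuple in $\cal C$ consisting of commuting Hermitian matrices equipped with the Loewner order and matrix multiplication. Concretely, let $\mathcal{A}$ denote the commutative $\ast$-subalgebra of Hermitian matrices generated by $A_1,\ldots,A_{2n}$. First I would verify the required structural hypotheses: $(\mathcal{A},+,\preceq_L)$ is a partially ordered Abelian group, and matrix multiplication on $\mathcal{A}$ is associative, distributive over $+$, symmetric (because the generators pairwise commute), and nonnegativity-preserving, since the product of two commuting positive semidefinite matrices is again positive semidefinite. Hence $(\mathcal{A},+,\preceq_L,\mathcal{A},+,\preceq_L,\mathcal{A},+,\preceq_L,\cdot)$ lies in $\cal C$ with symmetric $\ast$, matching the corresponding entry of Table \ref{tbl:calc}.

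With this setup in hand, the sum-of-products inequality is the first conclusion of Theorem \ref{thm:variationcalc} applied with $I=K=\mathcal{A}$ to the chain $A_1\preceq_L A_2\preceq_L\cdots\preceq_L A_{2n}$. For the product-of-sums inequality, the additional hypothesis $A_1\succeq_L 0$, together with the Loewner chain, forces each $A_i$ to be positive semidefinite, which is exactly the condition $a_1\succeq_I 0$ required by the second part of Theorem \ref{thm:variationcalc}. One may then read off both bounds directly from that result.

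The one subtlety to confirm is that nothing leaves $\mathcal{A}$ while carrying out the adjacent-transposition argument underlying Theorem \ref{thm:variationcalc}: every matrix that appears is a sum, difference, or product of the $A_i$, so it lies in $\mathcal{A}$, and all Loewner comparisons invoked via Lemma \ref{lem:hermitian} therefore remain valid throughout. This is the only place where one needs to pause; pairwise commutativity of the $A_i$ is exactly enough to ensure it, and no stronger assumption is required. Once this is noted, the theorem follows immediately from the general framework.
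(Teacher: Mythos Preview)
Your proposal is correct and follows exactly the paper's route: the paper simply states that this theorem is implied by Theorem \ref{thm:variationcalc}, and you have filled in the verification that the commutative algebra $\mathcal{A}$ generated by the $A_i$ (with matrix multiplication as $\ast$ and the Loewner order) is a member of $\cal C$ with symmetric $\ast$, matching the relevant row of Table \ref{tbl:calc}. Your remark about closure in $\mathcal{A}$ is a helpful clarification of a point the paper leaves implicit, but the argument is otherwise identical.
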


Corollary \ref{cor:variation-multi-calc} implies:
\begin{corollary} \label{cor:variation-prodsum-multiple-general}
Let $0 \preceq_L A_1\preceq_L A_2\preceq_L\cdots \preceq_L A_{kn}$ be a sequence of $kn$ commuting Hermitian matrices.  
Then 
$$ \sum_{j=1}^n \prod_{i=1}^k A_{\sigma((j-1)k+i)} \preceq_L \sum_{j=1}^n \prod_{i=1}^k A_{(j-1)k+i}$$
and 
$$\prod_{j=1}^n \sum_{i=1}^k A_{(j-1)k+i} \preceq_L \prod_{j=1}^n \sum_{i=1}^k A_{\sigma((j-1)k+i)} $$
for all $\sigma \in S_{kn}$.
\end{corollary}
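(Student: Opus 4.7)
The plan is to recognize that the statement is a direct instantiation of Corollary \ref{cor:variation-multi-calc} when the tuple in $\mathcal{C}$ is taken to be the commuting Hermitian matrices under the Loewner order, with matrix addition and matrix multiplication. Indeed, Table \ref{tbl:calc} already lists this tuple; the remaining work is to verify its membership in $\mathcal{C}$ with $\ast$ symmetric, and then to invoke the corollary mechanically.

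First I would fix the family $\{A_1,\dots,A_{kn}\}$ of pairwise commuting Hermitian matrices and restrict attention to the commutative $*$-subalgebra $\mathcal{A}$ they generate (for instance, the $\mathbb{R}$-span of all monomials in the $A_i$, which is a set of Hermitian matrices). Within $\mathcal{A}$, the addition is the usual matrix addition (so $(\mathcal{A},+)$ is an abelian group), and by construction any two elements commute, so matrix multiplication $\ast$ is a symmetric, associative, distributive binary operation sending $\mathcal{A}\times\mathcal{A}$ to $\mathcal{A}$. The Loewner order is compatible with addition since $X\preceq_L Y$ is equivalent to $Y-X$ being positive semidefinite, which is preserved by translation, so $(\mathcal{A},+,\preceq_L)$ is a partially ordered abelian group.

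Next I would check the nonnegativity-preservation clause of Definition \ref{def:cg}: if $X,Y\in\mathcal{A}$ satisfy $X\succeq_L 0$ and $Y\succeq_L 0$, then $XY\succeq_L 0$. Since $X$ and $Y$ commute and are Hermitian PSD, they are simultaneously unitarily diagonalizable with nonnegative eigenvalues, so $XY$ is Hermitian with nonnegative eigenvalues, hence PSD. This is exactly Lemma \ref{lem:hermitian} at the base case and confirms that the tuple $(\mathcal{A},+,\preceq_L,\mathcal{A},+,\preceq_L,\mathcal{A},+,\preceq_L,\ast)$ lies in $\mathcal{C}$ with symmetric $\ast$.

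Finally, since all of the expressions $\sum_j\prod_i A_{\sigma((j-1)k+i)}$, $\sum_j\prod_i A_{(j-1)k+i}$, $\prod_j\sum_i A_{(j-1)k+i}$, and $\prod_j\sum_i A_{\sigma((j-1)k+i)}$ are built from $A_1,\dots,A_{kn}$ via finitely many additions and multiplications, they all lie in $\mathcal{A}$; Corollary \ref{cor:variation-multi-calc} applied to the ordered sequence $0\preceq_L A_1\preceq_L\cdots\preceq_L A_{kn}$ in $\mathcal{A}$ then yields both claimed inequalities simultaneously. The only genuine care point is confining the argument to the commutative subalgebra $\mathcal{A}$—without commutativity the product would neither be Hermitian nor preserve the Loewner cone—but this is guaranteed by hypothesis and the closure of $\mathcal{A}$ under $+$ and $\ast$.
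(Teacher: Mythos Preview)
Your proposal is correct and matches the paper's approach: the paper states that Corollary \ref{cor:variation-prodsum-multiple-general} follows directly from Corollary \ref{cor:variation-multi-calc}, relying on the entry for commuting Hermitian matrices in Table \ref{tbl:calc}. Your explicit construction of the commutative Hermitian subalgebra $\mathcal{A}$ generated by $A_1,\dots,A_{kn}$ is a welcome bit of care that makes precise what the paper's table entry ``Commuting Hermitian matrices'' means as a set $I$ closed under $+$ and $\ast$, but the underlying argument is the same.
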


For both the Kronecker product $\otimes$ and Hadamard product $\odot$, the product of two positive semidefinite Hermitian matrices is Hermitian and positive semidefinite. In addition, the Hadamard product is a symmetric operator. 
Lemma \ref{lem:calc} then implies the following:

\begin{lemma}\label{lem:kronhadamard}
Let $A_1$, $A_2$,  $B_1$, $B_2$ be Hermitian matrices. If  $A_1 \preceq_L A_2$ and $B_1 \preceq_L B_2$, then
$$ A_1\otimes B_1 + A_2\otimes B_2 \succeq_L A_1\otimes B_2 + A_2\otimes B_1$$
If in addition $A_i$ and $B_i$ are of the same order, then
$$ A_1\odot B_1 + A_2\odot B_2 \succeq_L A_1\odot B_2 + A_2\odot B_1$$
$$ (A_1+B_1)\odot (A_2+B_2) \preceq_L (A_2+B_1)\odot (A_1+B_2) $$
\end{lemma}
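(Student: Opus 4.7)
The plan is to obtain Lemma \ref{lem:kronhadamard} as an immediate consequence of Lemma \ref{lem:calc}, once I verify that the relevant tuples lie in the class $\cal C$ of Definition \ref{def:cg}. Concretely, I would take $I = J = K$ to be Hermitian matrices (of all sizes in the Kronecker case, of a fixed order $n$ in the Hadamard case), equipped with matrix addition and the Loewner order, and then take $\ast$ to be either $\otimes$ or $\odot$.

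The first step is to check the three conditions of Definition \ref{def:cg}. Hermitian matrices form an Abelian group under addition, and the Loewner order is compatible with translation because $A \preceq_L B$ is defined by $B - A \succeq_L 0$; so $(I,+,\preceq_L)$ is a partially ordered Abelian group. Distributivity of $\otimes$ and $\odot$ over matrix addition is immediate from bilinearity. The essential ingredient is nonnegativity preservation: for the Kronecker product, writing $A = X^*X$ and $B = Y^*Y$ gives $A \otimes B = (X \otimes Y)^*(X \otimes Y) \succeq_L 0$; for the Hadamard product, this is the Schur product theorem. Both products of two Hermitian matrices are Hermitian (for Hadamard entrywise; for Kronecker since $(A \otimes B)^* = A^* \otimes B^*$), so the image lies in the ambient group.

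With these three conditions verified, Lemma \ref{lem:calc} applies. Its first conclusion, $a_1 \ast b_1 +_K a_2 \ast b_2 \succeq_K a_1 \ast b_2 +_K a_2 \ast b_1$, is precisely the first asserted inequality with $\ast = \otimes$, and (since $\ast = \odot$ equally satisfies the hypotheses when $A_i$ and $B_i$ share order) also yields the second asserted inequality. In both cases the conclusion is just the rearrangement of $(A_2 - A_1) \ast (B_2 - B_1) \succeq_L 0$.

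For the third inequality (product of sums under $\odot$), I invoke the second conclusion of Lemma \ref{lem:calc}, which requires $\ast$ to be symmetric. The Hadamard product is symmetric since $(A \odot B)_{ij} = A_{ij} B_{ij} = B_{ij} A_{ij}$, so Lemma \ref{lem:calc} delivers $(A_1 + B_1) \odot (A_2 + B_2) \preceq_L (A_1 + B_2) \odot (A_2 + B_1)$, which is the stated inequality. I would also remark, to explain the asymmetry of the lemma's statement, that the Kronecker product is \emph{not} symmetric in general (only permutation-similar to its swap), which is why no analogous product-of-sums inequality is claimed for $\otimes$. There is no substantive obstacle in this proof; the only nontrivial ingredients are the two classical positivity facts (eigenvalue multiplicativity of $\otimes$ and the Schur product theorem), and everything else is a matter of unpacking definitions.
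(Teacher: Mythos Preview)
Your proposal is correct and follows essentially the same approach as the paper: the paper simply notes that the Kronecker and Hadamard products of positive semidefinite Hermitian matrices are positive semidefinite and that the Hadamard product is symmetric, and then states that Lemma \ref{lem:calc} implies the result. Your write-up supplies the verification of the $\cal C$-membership conditions in more detail (the $X^*X$ argument for $\otimes$ and the Schur product theorem for $\odot$), but the logical route is identical.
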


This allows us to prove the following series of results:

\begin{theorem} \label{thm:kronhadamard1}
Let $A_1\preceq_LA_2\preceq_L\cdots \preceq_L A_n$, and $B_1\preceq_LB_2\preceq_L\cdots \preceq_L B_n$ be Hermitian matrices.
Then 
$$ \sum_i \left(A_i\otimes B_{n-i+1}\right) \preceq_L \sum_i \left(A_i\otimes B_{\sigma(i)}\right) \preceq_L \sum_i \left(A_i\otimes B_i\right)$$
for all $\sigma\in S_n$.
If in addition $A_i$ and $B_i$ are of the same order,  then
$$ \sum_i \left(A_i\odot B_{n-i+1}\right) \preceq_L \sum_i \left(A_i\odot B_{\sigma(i)}\right) \preceq_L \sum_i \left(A_i\odot B_i\right)$$
for all $\sigma\in S_n$.
\end{theorem}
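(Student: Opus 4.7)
The plan is to recognize Theorem \ref{thm:kronhadamard1} as an instance of the general framework developed via Theorem \ref{thm:newRIcalc}, specialized to two choices of the tuple in $\mathcal{C}$. For the Kronecker product version, I would fix orders $m$ and $p$ and take $I$ to be the Hermitian matrices of order $m$, $J$ the Hermitian matrices of order $p$, and $K$ the Hermitian matrices of order $mp$, all with their Loewner partial orders and matrix addition as the group operation, with $\ast = \otimes$. For the Hadamard product version, I would take $I = J = K$ to be the Hermitian matrices of a single fixed order, again with the Loewner order, and $\ast = \odot$.

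The next step is to verify that both tuples lie in $\mathcal{C}$ according to Definition \ref{def:cg}. The partially-ordered-Abelian-group conditions are standard for Hermitian matrices under the Loewner order. Distributivity $(A+B)\ast C = A\ast C + B\ast C$ and $A\ast(B+C) = A\ast B + A\ast C$ is a routine property of both $\otimes$ and $\odot$. The only nontrivial point is nonnegativity preservation: $A,B \succeq_L 0 \Rightarrow A\ast B \succeq_L 0$. For the Kronecker product this follows since the eigenvalues of $A\otimes B$ are products of eigenvalues of $A$ and $B$, all nonnegative when $A,B$ are positive semidefinite. For the Hadamard product, this is exactly the Schur product theorem. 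Note that $\odot$ is symmetric but $\otimes$ is not, which is why the Kronecker statement does not have a ``product of sums'' analogue here.

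Once the tuples are confirmed to lie in $\mathcal{C}$, Lemma \ref{lem:kronhadamard} (itself the specialization of Lemma \ref{lem:calc} to these two tuples) supplies the pairwise swap inequality needed to satisfy Eq. (\ref{eqn:order1}) with $g(x_1,\dots,x_n) = \sum_i x_i$ and $f = \ast$. Then Theorem \ref{thm:newRIcalc} (which was established by invoking Theorem \ref{thm:newRI} with $g = \sum$ and $f = \ast$) applies verbatim to both the Kronecker and Hadamard cases and yields precisely the two chains of Loewner inequalities in the statement.

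There is no real obstacle here; the work has already been done upstream in the abstract framework. The only thing to be careful about is bookkeeping between the two cases: in the Kronecker case $A_i$ and $B_j$ need not have the same order, so the ambient space $K$ is distinct from $I$ and $J$, whereas in the Hadamard case the hypothesis ``$A_i$ and $B_i$ are of the same order'' is needed precisely so that $I = J = K$ is a legitimate single tuple in $\mathcal{C}$. Once this distinction is respected, the conclusion is immediate from Theorem \ref{thm:newRIcalc}.
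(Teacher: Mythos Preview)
Your proposal is correct and matches the paper's own approach: the paper presents Theorem~\ref{thm:kronhadamard1} as an immediate consequence of Lemma~\ref{lem:kronhadamard} (itself the specialization of Lemma~\ref{lem:calc} to the Kronecker and Hadamard tuples in $\mathcal{C}$, as listed in Table~\ref{tbl:calc}) together with Theorem~\ref{thm:newRIcalc}, which is exactly the route you describe. Your additional remarks on verifying membership in $\mathcal{C}$ (distributivity, and nonnegativity preservation via the eigenvalue description of $A\otimes B$ and the Schur product theorem for $\odot$) simply make explicit what the paper states in one sentence before Lemma~\ref{lem:kronhadamard}.
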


Theorem \ref{thm:kronhadamard1} was also shown in Ref. \cite{Tie2011}.

\begin{theorem}
Let $0\preceq_L A_1\preceq_LA_2\preceq_L\cdots \preceq_L A_n$, and $0\preceq_L B_1\preceq_LB_2\preceq_L\cdots \preceq_L B_n$ be Hermitian matrices of the same order.
Then 
$$ \bigodot_i \left(A_i+B_{n-i+1}\right) \succeq_L \bigodot_i\left( A_i+B_{\sigma(i)}\right) \succeq_L \bigodot_i \left(A_i+B_i\right)$$
for all $\sigma\in S_n$.
\end{theorem}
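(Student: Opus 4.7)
The plan is to recognize this as the dual (product-of-sums) half of Theorem \ref{thm:newRIcalc} applied to the structure consisting of Hermitian matrices under the Loewner order with the Hadamard product playing the role of $\ast$. First I would verify that the tuple $(H, \preceq_L, H, \preceq_L, H, \preceq_L, \odot)$, where $H$ denotes the Hermitian matrices of a fixed order, belongs to the class $\cal C$ of Definition \ref{def:cg} and satisfies the symmetry hypothesis of Theorem \ref{thm:newRIcalc}.

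The three conditions of Definition \ref{def:cg} are straightforward: $(H,+,\preceq_L)$ is a partially ordered abelian group; the Hadamard product distributes over entrywise matrix addition; and by the Schur product theorem the Hadamard product of two positive semidefinite Hermitian matrices is positive semidefinite, so $\odot$ is nonnegativity-preserving. Commutativity of the entrywise product gives symmetry of $\odot$. Thus the tuple sits inside $\cal C$ with $I=J=K=H$, all three orders equal to $\preceq_L$, and $\ast$ symmetric, which are precisely the extra hypotheses needed to invoke the second (product-of-sums) conclusion of Theorem \ref{thm:newRIcalc}.

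With those verifications in place, I would simply apply Theorem \ref{thm:newRIcalc} directly: taking the chains $A_1\preceq_L\cdots\preceq_L A_n$ and $B_1\preceq_L\cdots\preceq_L B_n$ with $A_1,B_1\succeq_L 0$, its dual statement yields
\[
\bigodot_i\bigl(A_i+B_{n-i+1}\bigr) \succeq_L \bigodot_i\bigl(A_i+B_{\sigma(i)}\bigr) \succeq_L \bigodot_i\bigl(A_i+B_i\bigr)
\]
for every $\sigma\in S_n$, which is the claim. Alternatively, one could give a self-contained proof that mirrors the permutahedron argument of Theorem \ref{thm:newRI}: order $S_n$ by adjacent transpositions, reduce to a single swap of two coordinates, and invoke the second inequality in Lemma \ref{lem:kronhadamard} at each swap step, which handles the base case $(A_1+B_1)\odot(A_2+B_2)\preceq_L (A_1+B_2)\odot(A_2+B_1)$.

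The only non-routine ingredient is the nonnegativity-preservation of $\odot$ on positive semidefinite matrices; everything else is bookkeeping that has been done once and for all in Theorem \ref{thm:newRIcalc}. Since the Schur product theorem is classical, there is no real obstacle, and the proof reduces to checking the hypotheses of the general framework. The main place to be careful is to note that $A_1,B_1\succeq_L 0$ is needed so that all the sums $A_i+B_{\sigma(i)}$ are positive semidefinite, ensuring that the $\succeq_L 0$ hypothesis of Lemma \ref{lem:calc} (instantiated as Lemma \ref{lem:kronhadamard} here) is satisfied at every intermediate swap.
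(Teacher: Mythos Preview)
Your proposal is correct and follows essentially the same route as the paper: the paper lists $(H,\preceq_L,H,\preceq_L,H,\preceq_L,\odot)$ as a member of $\cal C$ in Table~\ref{tbl:calc}, establishes the swap inequality as Lemma~\ref{lem:kronhadamard}, and then states this theorem as a direct consequence of the general framework (Theorem~\ref{thm:newRIcalc}) without a separate proof. Your verification of the $\cal C$-membership via the Schur product theorem and your care about the positivity hypothesis $A_1,B_1\succeq_L 0$ are exactly the ingredients the paper relies on.
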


\begin{theorem} 
Let $A_{ij}$ be a sequence of positive semidefinite Hermitian matrices such that for each $i$, $A_{i1}\preceq_L A_{i2}\preceq_L\cdots \preceq_L A_{in}$. Then
$$\sum_i \bigotimes_j A_{j\sigma_j(i)} \preceq_L \sum_i \bigotimes_j A_{ji},$$ 
\end{theorem}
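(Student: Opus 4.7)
The plan is to deduce this theorem as a direct corollary of Theorem~\ref{thm:newRImulticalc}, with the operation $\ast$ taken to be the Kronecker product $\otimes$ on Hermitian matrices equipped with the Loewner partial order $\preceq_L$. The structure of the conclusion already matches the first half of Theorem~\ref{thm:newRImulticalc} once we identify $a_{ij} \leftrightarrow A_{ij}$ and $\Asterisk_j \leftrightarrow \bigotimes_j$, so the work reduces entirely to verifying hypotheses.

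First I would check that the tuple (Hermitian matrices, addition, Loewner order, Kronecker product) belongs to the class $\mathcal{C}$ of Definition~\ref{def:cg}. This amounts to three standard facts: (i) Hermitian matrices form a partially ordered Abelian group under addition with the Loewner order; (ii) the Kronecker product is bilinear, hence distributes over matrix addition on both sides, $(A+B)\otimes C = A\otimes C + B\otimes C$ and $A\otimes (B+C) = A\otimes B + A\otimes C$; and (iii) $A \succeq_L 0$ and $B \succeq_L 0$ imply $A\otimes B \succeq_L 0$, since the eigenvalues of $A\otimes B$ are the pairwise products of the eigenvalues of $A$ and $B$, hence nonnegative. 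This verification is also recorded as an entry of Table~\ref{tbl:calc}.

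With membership in $\mathcal{C}$ confirmed, the first conclusion of Theorem~\ref{thm:newRImulticalc} immediately yields $\sum_i \bigotimes_j A_{j\sigma_j(i)} \preceq_L \sum_i \bigotimes_j A_{ji}$, which is exactly the claim. The hypothesis $A_{i1} \succeq_L 0$ needed to invoke Theorem~\ref{thm:newRImulticalc} is provided by the positive semidefiniteness assumption on the $A_{ij}$.

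The only noteworthy subtlety, rather than a real obstacle, is that the Kronecker product is not symmetric ($A\otimes B \neq B\otimes A$ in general), so the second (product-of-sums) half of Theorem~\ref{thm:newRImulticalc} cannot be applied. This is why the statement of the theorem contains only the sum-of-Kronecker-products inequality, in contrast to the Hadamard product case above where a companion product-of-sums inequality is available. A secondary bookkeeping point is that the sets $I=J=K$ in Definition~\ref{def:cg} should be interpreted as Hermitian matrices of \emph{any} dimension, with $\preceq_L$ comparing only matrices of equal size; this causes no difficulty because every inequality arising in the proof of Theorem~\ref{thm:newRImulticalc} compares sums of identical iterated Kronecker products, so matching dimensions are automatic throughout.
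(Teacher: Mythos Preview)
Your proposal is correct and is essentially the paper's own argument: the theorem is stated immediately after Lemma~\ref{lem:kronhadamard} as one of the ``series of results'' that follow once the Kronecker-product tuple is recognized as a member of~$\mathcal{C}$ (Table~\ref{tbl:calc}), at which point Theorem~\ref{thm:newRImulticalc} applies verbatim. Your remarks about why only the sum-of-products half survives (non-symmetry of~$\otimes$) and about the dimension bookkeeping are apt and match the paper's implicit handling of these points.
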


\begin{theorem} 
Let $A_{ij}$ be a sequence of positive semidefinite Hermitian matrices of the same order for $1\leq i\leq k$, $1\leq j\leq n$ such that for each $i$, $A_{i1}\preceq_L A_{i2}\preceq_L\cdots \preceq_L A_{in}$. Then
$$\sum_i \bigodot_j A_{j\sigma_j(i)} \preceq_L \sum_i \bigodot_j A_{ji}$$ 
and
$$\bigodot_i \sum_j A_{j\sigma_j(i)} \succeq_L \bigodot_i \sum_j A_{ji}$$ 
for all permutations $\sigma_j\in S_n$.
\end{theorem}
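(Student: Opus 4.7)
The plan is to apply Theorem \ref{thm:newRImulticalc} to the tuple $(H, +, \preceq_L, H, +, \preceq_L, H, +, \preceq_L, \odot)$, where $H$ denotes the set of Hermitian matrices of a fixed order $m$, $+$ is ordinary matrix addition, $\preceq_L$ is the Loewner partial order, and $\odot$ is the Hadamard product. The conclusion of that theorem gives exactly the two inequalities in the statement, one with sums of Hadamard products and one with Hadamard products of sums, so the work is entirely in checking membership in $\cal C$.

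First I would verify that $(H, +, \preceq_L)$ is a partially ordered Abelian group: $(H,+)$ is Abelian, and translation invariance $X+Z \preceq_L Y+Z \Leftrightarrow X \preceq_L Y$ is immediate because $(Y+Z)-(X+Z) = Y-X$. Next I would check the three conditions in Definition \ref{def:cg} for $\odot$: (i) distributivity $(A+B)\odot C = A\odot C + B\odot C$ and $A\odot(B+C) = A\odot B + A\odot C$ holds entrywise; (ii) symmetry $A\odot B = B\odot A$ is entrywise; and (iii) nonnegativity preservation, i.e.\ $A,B \succeq_L 0 \Rightarrow A\odot B \succeq_L 0$, is precisely the Schur product theorem. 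This also shows that $A\odot B$ is Hermitian whenever $A,B$ are, so $\odot$ indeed maps into $H$.

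With this tuple verified to lie in $\cal C$ with $\ast$ symmetric, I would invoke Theorem \ref{thm:newRImulticalc} directly. The hypothesis of that theorem requires $0 \preceq_I a_{i1} \preceq_I \cdots \preceq_I a_{in}$ for each $i$, which is satisfied here since each $A_{ij}$ is positive semidefinite and the chain $A_{i1} \preceq_L \cdots \preceq_L A_{in}$ is given. The first conclusion of Theorem \ref{thm:newRImulticalc} then yields the sum-of-Hadamard-products inequality, and since $\odot$ is symmetric, the second conclusion yields the Hadamard-product-of-sums inequality.

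The main obstacle, such as it is, is simply recalling the Schur product theorem; everything else is formal bookkeeping reducing to the already-proved Theorem \ref{thm:newRImulticalc}. I would note in passing that, unlike the preceding theorem for the Kronecker product, no commutativity hypothesis on the $A_{ij}$ is needed here because the Hadamard product is a symmetric operation taking $H\times H$ into $H$, so the tuple belongs to $\cal C$ with $I=J=K=H$ without any extra assumption.
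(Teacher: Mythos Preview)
Your proposal is correct and matches the paper's approach exactly: the paper lists the tuple of Hermitian matrices under the Loewner order with the Hadamard product as a member of $\cal C$ (Table~\ref{tbl:calc}), notes just before Lemma~\ref{lem:kronhadamard} that the Hadamard product of positive semidefinite matrices is positive semidefinite (the Schur product theorem) and that $\odot$ is symmetric, and then presents the theorem without further proof as a direct consequence of Theorem~\ref{thm:newRImulticalc}. Your write-up simply makes this implicit derivation explicit.
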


\begin{theorem} 
Let $A_1\preceq_LA_2\preceq_L\cdots \preceq_L A_{2n}$ be a sequence of $2n$ Hermitian matrices.  
Then 
$$\sum_{i=1}^n \left(A_{i}\odot A_{2n-i+1}\right) \preceq_L  \sum_{i=1}^n \left(A_{\sigma(2i-1)}\odot A_{\sigma(2i)}\right) \preceq_L \sum_{i=1}^n \left(A_{2i-1}\odot A_{2i}\right).$$
for all $\sigma \in S_{2n}$. If in addition $A_1\succeq_L 0$, then
$$\bigodot_{i=1}^n \left(A_{2i-1}+A_{2i}\right) \preceq_L  \bigodot_{i=1}^n \left(A_{\sigma(2i-1)}+A_{\sigma(2i)}\right) \preceq_L \bigodot_{i=1}^n \left(A_{i}+A_{2n-i+1}\right)$$
for all $\sigma \in S_{2n}$.
\end{theorem}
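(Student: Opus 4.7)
The plan is to apply Theorem \ref{thm:variationcalc} directly with the tuple in $\cal C$ consisting of Hermitian matrices of a fixed order under the Loewner order $\preceq_L$, equipped with the Hadamard product $\odot$. In the notation of Theorem \ref{thm:variationcalc}, I take $I = J = K$ to be this set of Hermitian matrices and all three partial orders to be $\preceq_L$, with $\ast = \odot$.

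First I would verify that this tuple lies in $\cal C$. Hermitian matrices form an Abelian group under matrix addition, and the Loewner order makes $(\text{Herm},+,\preceq_L)$ a partially ordered Abelian group since $A \preceq_L B \Leftrightarrow A + C \preceq_L B + C$ for any Hermitian $C$. Distributivity of $\odot$ over $+$ is immediate from the entry-wise definition of the Hadamard product. The crucial nonnegativity-preservation condition, namely that $A \succeq_L 0$ and $B \succeq_L 0$ imply $A \odot B \succeq_L 0$, is precisely the classical Schur product theorem. This membership is in fact already recorded in Table \ref{tbl:calc}. Finally, symmetry of $\odot$ is immediate since $(A\odot B)_{ij} = A_{ij}B_{ij} = B_{ij}A_{ij} = (B\odot A)_{ij}$ for all entries $i,j$.

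With these verifications in place, the first chain of inequalities — the sum-of-Hadamard-products statement — follows immediately from the first half of Theorem \ref{thm:variationcalc}. For the second chain, involving the Hadamard product $\bigodot$ of sums, I would invoke the second half of Theorem \ref{thm:variationcalc}: since our tuple has $I = K$ (both equal to the Hermitian matrices), and the assumption $A_1 \succeq_L 0$ combined with the ascending chain $A_1 \preceq_L A_2 \preceq_L \cdots \preceq_L A_{2n}$ forces $A_i \succeq_L 0$ for all $i$, the additional hypotheses of that part are satisfied.

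Since the argument reduces entirely to an application of Theorem \ref{thm:variationcalc} together with the Schur product theorem (already implicitly used in Lemma \ref{lem:kronhadamard}), no real obstacle arises; the only subtle point is the verification of membership in $\cal C$, essentially by appealing to Table \ref{tbl:calc}.
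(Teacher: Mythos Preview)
Your proposal is correct and matches the paper's intended argument: the theorem is listed among the ``series of results'' that follow immediately from Lemma~\ref{lem:kronhadamard}, and the mechanism is exactly an application of Theorem~\ref{thm:variationcalc} to the tuple (Hermitian matrices, Loewner order, Hadamard product) recorded in Table~\ref{tbl:calc}. Your verification of membership in $\cal C$ via the Schur product theorem and of the symmetry of $\odot$ is precisely what the paper relies on implicitly.
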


\begin{corollary} 
Let $0 \preceq_L A_1\preceq_L A_2\preceq_L\cdots \preceq_L A_{kn}$ be a sequence of $kn$ Hermitian matrices.  
Then 
$$ \sum_{j=1}^n \bigodot_{i=1}^k A_{\sigma((j-1)k+i)} \preceq_L \sum_{j=1}^n \bigodot_{i=1}^k A_{(j-1)k+i}$$
$$\bigodot_{j=1}^n \sum_{i=1}^k A_{(j-1)k+i} \preceq_L \bigodot_{j=1}^n \sum_{i=1}^k A_{\sigma((j-1)k+i)} $$
for all $\sigma \in S_{kn}$.
\end{corollary}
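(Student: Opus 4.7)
The plan is to recognize this corollary as the Hadamard-product instantiation of Corollary \ref{cor:variation-multi-calc}, parallel to how the earlier Hermitian-matrix theorems in this subsection instantiated the general commutative-diagram results. So the whole game is to verify that the Hadamard product satisfies the hypotheses of Corollary \ref{cor:variation-multi-calc}, and then just apply it.

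Specifically, I would take $I = J = K$ to be the space of $m \times m$ Hermitian matrices (for some fixed $m$, determined by the order of the $A_i$), equipped with matrix addition and the Loewner partial order $\preceq_L$, and take $\ast$ to be the Hadamard product $\odot$. First I would check the three conditions of Definition \ref{def:cg}. The group axioms and the fact that $(I,+,\preceq_L)$ is a partially ordered Abelian group are standard (Loewner order is translation-invariant under matrix addition). Distributivity of $\odot$ over $+$ is entrywise obvious. The key fact — that $\odot$ is nonnegativity-preserving, i.e. $A \succeq_L 0$ and $B \succeq_L 0$ imply $A \odot B \succeq_L 0$ — is the Schur product theorem, already invoked in Lemma \ref{lem:kronhadamard}. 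Finally, $\odot$ is commutative (entrywise), hence symmetric in the sense required.

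With these verifications in hand, the tuple $(I, +, \preceq_L, I, +, \preceq_L, I, +, \preceq_L, \odot)$ is a member of $\mathcal{C}$ with symmetric $\ast$, so Corollary \ref{cor:variation-multi-calc} applies directly. Specializing its conclusion with $\ast = \odot$, we obtain precisely
\[ \sum_{j=1}^n \bigodot_{i=1}^k A_{\sigma((j-1)k+i)} \preceq_L \sum_{j=1}^n \bigodot_{i=1}^k A_{(j-1)k+i} \]
and
\[ \bigodot_{j=1}^n \sum_{i=1}^k A_{(j-1)k+i} \preceq_L \bigodot_{j=1}^n \sum_{i=1}^k A_{\sigma((j-1)k+i)} \]
for all $\sigma \in S_{kn}$, which is exactly the corollary.

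I do not expect any real obstacle: the only nontrivial ingredient is the Schur product theorem, and that has already been used earlier in the section. Unlike the Kronecker-product case, there is no issue with the matrix order changing under the operation, since $\odot$ preserves the order $m$. The symmetry of $\odot$ is crucial because Corollary \ref{cor:variation-multi-calc} requires $\ast$ to be symmetric to get the product-of-sums direction; without it we would only obtain the sum-of-products inequality. Since both conclusions are asserted, it is essential that we are working with $\odot$ rather than, say, the Kronecker product, for which the analogous product-of-sums statement is absent in the preceding theorems.
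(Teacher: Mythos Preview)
Your proposal is correct and matches the paper's approach exactly: the corollary is stated without proof in the paper precisely because it is the direct specialization of Corollary \ref{cor:variation-multi-calc} to the Hadamard product, which the paper has already recorded (in Table \ref{tbl:calc} and via Lemma \ref{lem:kronhadamard}) as a symmetric member of $\mathcal{C}$.
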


\section{Conclusions}
We consider several variants and generalizations of the rearrangement inequality for which we can generalize to multiple sequences and find both the set of permutations that maximizes or minimizes the sum of products or product of sums of terms and where the permutation can be chosen across sequences. We also study rearrangement inequalities beyond real numbers where the elements are vectors, matrices or functions.

\end{document}